\documentclass[11pt,a4paper]{article}
\usepackage[margin=0.88in]{geometry}
\usepackage[T1]{fontenc}
\usepackage[utf8]{inputenc}
\usepackage{lmodern}
\usepackage{amsmath,amssymb,amsthm,mathtools,bm,mathrsfs}
\usepackage{microtype,xurl,booktabs,array,tabularx,enumitem}
\usepackage{caption}
\usepackage{needspace}
\usepackage{xcolor,float,tikz}
\usepackage{cite}
\definecolor{linkblue}{RGB}{26,52,108}
\usepackage[colorlinks=true,linkcolor=linkblue,citecolor=linkblue,urlcolor=linkblue]{hyperref}
\usepackage[nameinlink,noabbrev]{cleveref}
\usetikzlibrary{arrows.meta}
\hypersetup{pdftitle={Matching Randomized Bounds for Finite-Sum Nonconvex-Concave Optimization},pdfauthor={},pdfsubject={Individual and averaged smoothness; bounded dual domains; Moreau stationarity},pdfkeywords={finite-sum optimization, nonconvex-concave minimax optimization, oracle complexity, randomized lower bounds, Moreau stationarity}}
\newtheorem{theorem}{Theorem}[section]
\newtheorem{lemma}[theorem]{Lemma}
\newtheorem{proposition}[theorem]{Proposition}
\newtheorem{corollary}[theorem]{Corollary}
\theoremstyle{definition}
\newtheorem{definition}[theorem]{Definition}
\theoremstyle{remark}
\newtheorem{remark}[theorem]{Remark}
\newcommand{\R}{\mathbb R}
\newcommand{\E}{\mathbb E}
\newcommand{\Prob}{\mathbb P}
\newcommand{\PP}{\mathbb P}
\newcommand{\eps}{\varepsilon}
\newcommand{\norm}[1]{\left\lVert#1\right\rVert}
\newcommand{\ip}[2]{\left\langle#1,#2\right\rangle}
\newcommand{\op}{\mathrm{op}}
\newcommand{\argmin}{\operatorname*{argmin}}
\newcommand{\barB}{\overline B}

\newcommand{\Span}{\operatorname{span}}

\newcommand{\calH}{\mathcal H}

\newcommand{\T}{\mathsf T_{\mathrm{IS}}}
\newcommand{\Tas}{\mathsf T_{\mathrm{AS}}}
\newcommand{\rate}{\mathcal F}

\DeclareMathOperator{\diam}{diam}
\floatstyle{ruled}
\newfloat{algorithm}{tbp}{loa}
\floatname{algorithm}{Algorithm}
\crefname{algorithm}{Algorithm}{Algorithms}
\setlist[itemize]{leftmargin=1.4em,itemsep=3pt,topsep=4pt}
\setlist[enumerate]{leftmargin=1.8em,itemsep=3pt,topsep=4pt}
\allowdisplaybreaks[1]
\title{\textbf{Matching Randomized Bounds for Finite-Sum Nonconvex--Concave Optimization}}
\newcommand{\authornames}{Wendao Wu, Haihan Zhang, Qian Chen, Chenheng Zhang, Chunyuan Zheng, Cong Fang, Haoxuan Li, Zhouchen Lin}
\newcommand{\affiliation}{Peking University}

\newcommand{\mail}[1]{\href{mailto:#1}{\texttt{#1}}}
\newif\ifanonymous
\anonymousfalse
\ifanonymous
\author{Anonymous Authors}
\hypersetup{pdfauthor={Anonymous Authors}}
\else
\author{Wendao Wu$^{1,*}$\quad Haihan Zhang$^{1,*}$\quad Qian Chen$^{1,*}$\\[0.20em]
Chenheng Zhang$^{1,*}$\quad Chunyuan Zheng$^1$\\[0.20em]
Cong Fang$^{1,\dagger}$\quad Haoxuan Li$^{1,\dagger}$\quad Zhouchen Lin$^{1,\dagger}$\\[0.60em]
\small $^1$\affiliation\\[0.35em]
\footnotesize\mail{colombo.2026@outlook.com}\\[0.35em]
\small $^*$Equal contribution.\quad $^\dagger$Corresponding authors.}
\hypersetup{pdfauthor={\authornames}}
\fi
\date{}

\newcommand{\ResearchAgentSystem}{Colombo, our laboratory's internal auto-research system}

\begin{document}
\maketitle
\begin{abstract}
We characterize the randomized component-oracle complexity of finite-sum
nonconvex--concave optimization with a bounded dual domain. For
$\Phi(x)=\max_{y\in Y}n^{-1}\sum_{i=1}^n f_i(x,y)$, the goal is to find
an $\varepsilon$-stationary point of the Moreau envelope
$\Phi_{1/(2L)}$ with probability at least $2/3$ and a fixed query budget.
Let $A=L\Delta/\varepsilon^2$, $B=LD_Y/\varepsilon$, and
$\bar B=\max\{1,B\}$, where $\Delta$ bounds the initial primal gap and $D_Y$ bounds the dual diameter.
For $A$ above a universal constant, the worst-case complexity is
\[
\widetilde\Theta\!\left(n+A\min\left\{
\sqrt n\,\bar B+\bar B^4,\ n^{3/4}\bar B\right\}\right).
\]
The bounds agree within constants for $B\le n^{1/4}$ and within one
logarithm for larger $B$. Upper bounds require only mean-squared
averaged smoothness and concavity of the average in the dual variable;
lower bounds already hold under individual smoothness. The lower
constructions cover adaptive component selection, arbitrary query
norms, private randomization, and outputs that were never queried.
A cone construction gives the small-diameter bound, while a serial
branch construction and balanced replication give the remaining two
regimes. On the upper side, a persistent estimator with primal
averaging complements accelerated proximal dual steps equipped with
an explicit primal certificate. The characterization takes the worst
case over admissible known dual domains and finite dimensions.
\end{abstract}
\noindent\textbf{Keywords:} finite-sum optimization; nonconvex--concave minimax optimization; oracle complexity; randomized lower bounds; Moreau stationarity.

\par\medskip
{\small
\begin{samepage}
\noindent\textbf{AI Usage.}
Nearly the entire research pipeline for this paper was carried out by
\ResearchAgentSystem{}, powered by GPT-5.6 Sol. The system also conducted a
Lean-backed article audit of the resulting manuscript. The authors subsequently reviewed and approved the
mathematical claims, presentation, and formal artifacts, and take
responsibility for the final manuscript. The complete Lean audit report and the system's technical report will be made public at a later date.

\paragraph{Publication Intent and Recognition of Independent Work.}
This preprint is released as a research output of our study of
automated scientific research, for public scrutiny and discussion.
As we do not intend to submit this manuscript to a journal or
conference, it has not undergone the full editorial refinement
and internal review we would normally require for submission.
The authors remain responsible for its contents and for correcting
any errors or inaccurate attribution.

We recognize that releasing these results may affect researchers
who have independently been pursuing the same questions, including
work that is not yet public.
If this applies to your work, we invite you to contact us at
\texttt{colombo.2026@outlook.com}.
A brief description of the overlap, together with any dated
materials you are comfortable sharing, would help us understand
its scope and chronology.
Such materials might include draft excerpts, presentation slides,
or revision records; a complete manuscript or unpublished proof
is not required to begin the conversation.

We will review these cases carefully and update citations,
acknowledgments, and accounts of independent or concurrent work
as appropriate, including revising priority claims where warranted.
Nonpublic materials will be treated confidentially and used only
to assess the overlap unless their authors authorize otherwise.
We will obtain explicit permission from the authors before
publicly discussing such materials or providing them to an
automated research system.
\par
\end{samepage}
}

\section{Introduction}
\label{sec:introduction}
Consider the finite-sum minimax problem
\begin{equation}\label{eq:problem-main}
 \min_{x\in\R^{d_x}}\Phi(x),\qquad
 \Phi(x)=\max_{y\in Y}f(x,y),\qquad
 f(x,y)=\frac1n\sum_{i=1}^n f_i(x,y),
\end{equation}
where the average is smooth, possibly nonconvex in $x$, and concave
in $y$. The compact convex set $Y$ gives a finite diameter, but it
does not supply positive dual curvature. Consequently, the value
function $\Phi$ can be nonsmooth even when every component is smooth.
Moreau-envelope stationarity provides a common target for this
setting and underlies proximal approaches to nonconvex--concave
optimization~\cite{thekumparampil,lin2020,catalyst}.

Two resources interact in this problem. A large finite sum makes
full-gradient evaluations expensive, while a large dual domain makes
the maximization subproblem harder to localize. Classical
variance-reduction bounds for smooth minimization identify the
$\sqrt n$ scale~\cite{spider,page,peng}. For minimax optimization,
Catalyst-SVRG already yields an $n^{3/4}\varepsilon^{-3}$ polynomial
upper rate in the high-accuracy regime~\cite[Corollary~4.3]{catalyst}.
These facts do not determine how the optimal complexity changes
between a negligible dual domain and a large one, or whether that
complexity can be forced against randomized algorithms making
arbitrary component queries.

We resolve this joint dependence for a precise exact-oracle model.
With
\begin{equation}\label{eq:dimensionless-main}
 A=\frac{L\Delta}{\varepsilon^2},\qquad
 B=\frac{LD_Y}{\varepsilon},\qquad \bar B=\max\{1,B\},
\end{equation}
the accuracy-dependent coefficient has three regimes:
\[
 \sqrt n\,\bar B\quad(B\le n^{1/6}),\qquad
 B^4\quad(n^{1/6}\le B\le n^{1/4}),\qquad
 n^{3/4}B\quad(B\ge n^{1/4}).
\]
Here $D_Y$ is the permitted dual diameter, and the additive $n$ term
is retained in every regime. In particular, the intermediate regime
is not obtained by multiplying the smooth minimization complexity by
a single diameter factor. Its $B^4$ term is enforced by a hard
instance with an effective component count of order $B^4$, replicated
into the prescribed finite sum.

\paragraph{Contributions.}
Our results have four parts.
\begin{itemize}
\item \textbf{A uniform characterization.} We prove matching bounds
in $n$, $L$, $\Delta$, $D_Y$, and $\varepsilon$, with a single
logarithmic gap only in the large-diameter regime. The lower bounds
and upper bounds use the same fixed-budget, constant-success
criterion and the same Moreau parameter.
\item \textbf{Lower bounds for unrestricted randomized queries.}
The component oracle returns both the exact value and the full joint
gradient. Our constructions tolerate adaptive indices, dense
arbitrary-norm queries, and arbitrary final outputs. They do not
assume that queries lie in a revealed span or support.
\item \textbf{The same rates under two smoothness assumptions.}
The hard instances have individually $L$-Lipschitz gradients. Both
algorithms need only a mean-squared bound on component gradient
differences, with the same numerical constant $L$. Thus individual
and averaged smoothness have identical minimax rates in this model,
up to the stated logarithm.
\item \textbf{Explicit proof interfaces.} The lower proofs verify
exact value-and-gradient simulation and account for source releases
chronologically. The upper proofs handle biased anchor estimates,
constrained dual optima, and a computable primal certificate. All
construction formulas, parameter scalings, and algorithmic budgets
are included in the appendices.
\end{itemize}

The high-diameter polynomial upper rate is already present in
Catalyst~\cite{catalyst}; it is not itself a novelty claim of this
paper. Our characterization adds the full diameter crossover and
matching unrestricted randomized lower bounds, together with an
averaged-smoothness analysis and explicit fixed query budgets. The
lower bounds take a worst case over finite dimensions; the branch
construction can require exponentially large dimension. We state
this scope as part of the result.

\section{Problem Setting}
\label{sec:model-main}
Let $n\ge1$, $L,\Delta,\varepsilon>0$, and $D_Y\ge0$. The known set
$Y\subset\R^{d_y}$ is nonempty, compact, convex, and has diameter at
most $D_Y$. Component gradients are defined on a neighborhood of
$\R^{d_x}\times Y$; all smoothness inequalities and oracle queries
below are restricted to this product domain. The average
$f=n^{-1}\sum_i f_i$ is concave in $y$ for every $x$. We assume
\begin{equation}\label{eq:gap-main}
 \inf_x\Phi(x)>-\infty,\qquad
 \Phi(x^0)-\inf_x\Phi(x)\le\Delta.
\end{equation}
Only the average must be dual-concave. In particular, the individual
components need not be concave in $y$.
For the lower constructions, translating the primal coordinates
allows us to take $x^0=0$ without changing any class parameter.

\begin{definition}[Individual and averaged smoothness]
\label{def:smoothness-main}
Writing $z=(x,y)$, individual smoothness, denoted $\mathrm{IS}(L)$,
means
\begin{equation}\label{eq:is-main}
 \norm{\nabla f_i(z)-\nabla f_i(z')}\le L\norm{z-z'}
 \quad\text{for every }i\in[n]\text{ and }z,z'.
\end{equation}
Mean-squared averaged smoothness, denoted $\mathrm{AS}(L)$, means
\begin{equation}\label{as:eq:definition}
 \frac1n\sum_{i=1}^n\norm{\nabla f_i(z)-\nabla f_i(z')}^2
 \le L^2\norm{z-z'}^2\quad\text{for every }z,z'.
\end{equation}
\end{definition}
IS($L$) implies AS($L$). AS($L$) permits a component smoothness
constant as large as $\sqrt nL$, but it still makes the average
function $L$-smooth by Jensen's inequality. It is stronger than
smoothness of the average alone. For example, the components
$f_1(x)=Mx^2/2$ and $f_2(x)=(2L-M)x^2/2$ have an $L$-smooth average,
while their mean-squared smoothness is unbounded as $M/L$ grows.

\paragraph{Stationarity.}
The value function is $L$-weakly convex: $\Phi(x)+(L/2)\norm x^2$
is convex. Its Moreau envelope and proximal point are
\begin{equation}\label{eq:moreau-main}
 \Phi_{1/(2L)}(c)=\min_x\{\Phi(x)+L\norm{x-c}^2\},\qquad
 p(c)=\argmin_x\{\Phi(x)+L\norm{x-c}^2\}.
\end{equation}
The minimizer is unique and
$\nabla\Phi_{1/(2L)}(c)=2L(c-p(c))$.
The algorithm must return an output satisfying
\begin{equation}\label{eq:success-main}
 \Prob\!\left(\norm{\nabla\Phi_{1/(2L)}(\widehat x)}
                  \le\varepsilon\right)\ge\frac23.
\end{equation}
This certifies a nearby point $p(\widehat x)$ with a subgradient of
norm at most $\varepsilon$ and distance at most
$\varepsilon/(2L)$ from the output. No oracle for the Moreau
gradient or the proximal map is provided.

\paragraph{Oracle and complexity.}
A query specifies any $i\in[n]$ and $(x,y)\in\R^{d_x}\times Y$ and
returns
\[
 \bigl(f_i(x,y),\nabla_xf_i(x,y),\nabla_yf_i(x,y)\bigr).
\]
The algorithm may depend measurably on the complete transcript and
its private randomness. It may repeat indices, use arbitrary query
norms, and output an unqueried point. Computation and memory are
unrestricted; projection onto the known dual domain is not charged
as a component query. The budget is a deterministic upper bound on
the number of calls for every transcript and random seed.

Write $\T$ and $\Tas$ for the smallest worst-case budgets achieving
\eqref{eq:success-main} under IS($L$) and AS($L$), respectively.
The worst case includes admissible finite dimensions and known dual
domains, in addition to the component functions. Thus a lower bound
for diameter bound $D_Y$ asserts the existence of a hard known
domain, not hardness for every set with that diameter bound. The
construction domains are Euclidean balls. No exact initial dual
maximizer or uniform bound on component affine gradients is assumed.
The parameters in \eqref{eq:dimensionless-main} always use the same
$L$ as the class definition and Moreau envelope.

\section{Main Results}
\label{sec:results-main}
Define
\begin{equation}\label{eq:rate-main}
 \rate(n,B)=\min\{\sqrt n\,\bar B+\bar B^4,\ n^{3/4}\bar B\},
 \qquad \ell_{n,A,B}=\log(2+n+A+B).
\end{equation}

\begin{theorem}[Matching randomized component complexity]
\label{global:thm:main}\label{as:cor:integrated}
There are universal constants $A_*,c,C>0$ such that, for every
integer $n\ge1$, $L,\Delta,\varepsilon>0$, and $D_Y\ge0$ with
$A\ge A_*$,
\begin{equation}\label{global:eq:matching}
 c[n+A\rate(n,B)]\le\T\le\Tas
 \le C[n+A\rate(n,B)]\ell_{n,A,B}.
\end{equation}
More precisely, for either class,
\begin{equation}\label{global:eq:piecewise}
 \mathsf T=
 \begin{cases}
  \Theta(n+A\sqrt n\,\bar B),&0\le B\le n^{1/6},\\[2pt]
  \Theta(n+AB^4),&n^{1/6}\le B\le n^{1/4},\\[2pt]
  \widetilde\Theta(n+A n^{3/4}B),&B\ge n^{1/4}.
 \end{cases}
\end{equation}
Only the upper factor $\ell_{n,A,B}$ is suppressed in the last line;
the lower bound has no logarithmic loss.
\end{theorem}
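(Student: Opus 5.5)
The proof assembles separate lower and upper bounds, each stated for the regimes of $B$. The middle inequality $\T\le\Tas$ is immediate, since IS($L$) implies AS($L$): every AS-algorithm works on the IS class, and the worst case over the smaller class is no larger. It therefore suffices to prove the lower bound for IS($L$) and the upper bound for AS($L$). After that we check that the piecewise form \eqref{global:eq:piecewise} agrees with $n+A\rate(n,B)$ up to constants, which is elementary. For $B\le n^{1/6}$ we have $\bar B^4\le\sqrt n\,\bar B$ and $\sqrt n\,\bar B\le n^{3/4}\bar B$. For $n^{1/6}\le B\le n^{1/4}$ we have $B^4\ge\sqrt n B$ and $B^4\le n^{3/4}B$. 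For $B\ge n^{1/4}$ we have $n^{3/4}B\le B^4$. The log factor appears only where the upper bound proof needs it.

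\textbf{Lower bounds.} We prove three separate lower bounds and take their maximum.
\begin{enumerate}
\item \emph{The additive $n$ term.} We hide a single nonzero component among $n$ possibilities. Any algorithm that makes $o(n)$ queries misses it with constant probability, and the output is then uninformed.
\item \emph{The $A\sqrt n\,\bar B$ term.} We use a cone construction. For $B\lesssim1$ it reduces to a randomized finite-sum chain lower bound of the type used for SPIDER/PAGE. Its dual direction adds a factor $\bar B$ through the radius of a Euclidean-ball $Y$, while the Moreau gradient stays of order $\varepsilon$ until $\Theta(A\bar B)$ chain links are discovered. Each link needs $\Omega(\sqrt n)$ component queries in expectation.
\item \emph{The $AB^4$ and $An^{3/4}B$ terms.} We use the serial branch construction with effective component count $m\asymp\min\{B^4,n\}$. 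The hard instance on $m$ components forces $\Omega(A\,m^{3/4}B)$ queries: the factor $m^{3/4}B$ is $B^4$ when $m=B^4$, and $n^{3/4}B$ when $m=n$. Balanced replication then embeds it into $n$ components. Each of the $m$ pieces is copied $n/m$ times with rescaling so that IS($L$), $\Delta$ and $D_Y$ are preserved, and each query to the original instance is simulated by one query to the replicated one, or the count scales accordingly.
\end{enumerate}
The delicate point throughout is robustness to unrestricted randomized queries. I would follow the Yao/random-rotation argument: embed the chain in a random orthogonal frame in high (possibly exponential) dimension, and show that with high probability each query correlates with at most one undiscovered direction. Exact value-and-gradient simulation then holds from the released sources alone. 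A chronological release accounting gives a martingale bound on progress, and it also covers unqueried outputs, because the Moreau gradient at any point is $\ge\varepsilon$ unless the chain end has been reached.

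\textbf{Upper bounds.} These use the two algorithms from the abstract under AS($L$).
\begin{enumerate}
\item For the small-diameter regime, a persistent (SPIDER/PAGE-type) estimator with primal averaging, applied to proximal subproblems, gives $O(n+A(\sqrt n\,\bar B+\bar B^4))$. The $\bar B^4$ term comes from the dual-localization cost of each proximal step.
\item For large $B$, a Catalyst-style outer loop solves the strongly-convex--concave proximal subproblems with accelerated proximal dual steps. It certifies $\norm{\nabla\Phi_{1/(2L)}}\le\varepsilon$ through an explicit computable primal certificate, which bounds $\norm{\widehat x-p(\widehat x)}$. This costs $O((n+An^{3/4}B)\ell_{n,A,B})$, and the log comes from the inner restarts.
\end{enumerate}
A fixed budget and success probability $2/3$ follow from Markov's inequality applied to the certificate-based stopping rule.

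The main obstacle is the randomized lower bound for the branch construction, in particular showing that the $B^4$ effective count survives replication while the hard instance still resists arbitrary dense queries. I would first verify the one-step "no correlation with undiscovered directions" lemma using concentration on the sphere, and only then tune the scalings.
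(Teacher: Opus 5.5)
Your assembly matches the paper's. You take the maximum of an additive-$n$ bound, a cone bound and a replicated branch bound. You use the persistent-estimator upper bound for $B\le n^{1/4}$ and the accelerated upper bound above that. You get $\T\le\Tas$ from class inclusion, and your regime arithmetic is correct.

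The gap is the small-diameter lower bound. The statement bounds $\T$, so you need $\T\ge c(n+\sqrt n\,A)$ for \emph{individually} smooth components whenever $B$ is below a constant, including $D_Y=0$. A chain ``of the type used for SPIDER/PAGE'' does not give this.
\begin{itemize}
\item Those lower bounds are for the mean-squared (AS) class, and PAGE's is also restricted to linear-span algorithms.
\item Their hard instance puts a separate chain on a disjoint block for each component, which requires component smoothness of order $\sqrt n\,L$. If you rescale so every component is $L$-smooth, the chains have length $\asymp A/n$ and you get only $\Omega(n+A)$.
\item Random rotations change the query model, not the smoothness class. So your item~2 would prove the $\sqrt n A$ term only for $\Tas$.
\end{itemize}
The cone cannot be pushed down to cover this range either. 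It needs normalized dual radius $R\ge1$, i.e.\ $B\ge B_0$, because each component's $zy$ coupling block has norm $1/R$.

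Your account of where the cone's $\sqrt n$ comes from is also not the mechanism; it is not ``$A\bar B$ links at $\sqrt n$ queries each.'' There are $M\asymp A/q$ stages with $q=\sqrt n/R$. Each stage costs $\Omega(n)$ calls, because its source is the normalized average of $n$ Haar frame rows and each call exposes one row. Each stage carries gap $O(q)$. Individual smoothness enters through $(\sqrt n+3)/(Rq)\le4$ and $(\sqrt n+3)/q^2\le4$, which together cap $R$ at $n^{1/4}$.

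For $B<B_0$ the paper imports Peng et al.'s unrestricted randomized IS theorem. It transfers that result to Moreau stationarity via $\norm{\nabla F}\le\frac32\norm{\nabla F_{1/(2L)}}$. For $n<1024$ it makes every component equal to the average of a fixed $1024$-component instance. Your plan needs an ingredient of that strength, and it currently has none.

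Two points in the branch part also need fixing.
\begin{itemize}
\item ``Each query correlates with at most one undiscovered direction'' is the single-chain mechanism, and it yields only $\Omega(H)$. The factor $n$ per edge comes from the $n$ maps with mutually orthogonal images. The reachable-mass bound $p_t(K)\le\rho^{-r_K}\binom Kt/n^t$ then charges each source release a factor $\rho^{-1}$. These charges are absorbed because there are at most $M$ releases across $H=M(N-1)$ edges.
\item Your replication simulation runs in the wrong direction. The lower-bound transfer needs each query to the replicated $n$-component instance to be answered by one query to the $m$-component base instance. When $m\nmid n$, the weights $n/(mr_j)$ can reach $2$, so the base instance must be IS$(L/2)$. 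Its Moreau parameter is then $1/L$, and \cref{repl:moreau-comparison} is needed to compare the two Moreau gradients, at a cost of a factor $3/2$ in accuracy.
\end{itemize}
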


The two transition points follow from
$\sqrt nB=B^4$ at $B=n^{1/6}$ and
$B^4=n^{3/4}B$ at $B=n^{1/4}$.
\Cref{fig:phase-main} depicts these exponents. For a fixed nonzero
dual diameter, sufficiently high accuracy eventually enters the
last regime and gives the familiar $\varepsilon^{-3}$ dependence.
The theorem also resolves the regimes in which $n$ grows jointly
with the target accuracy or the permitted diameter is small.

\begin{figure}[t]
\centering
\begin{tikzpicture}[x=10.5cm,y=3.7cm,>=Latex,font=\small]
 \draw[->] (0,0.4)--(0,1.42) node[above] {$\alpha(\beta)$};
 \draw[->] (0,0.4)--(.61,0.4) node[right] {$\beta$};
 \draw[densely dashed,black!35] (1/6,0.4)--(1/6,1.4);
 \draw[densely dashed,black!35] (.25,0.4)--(.25,1.4);
 \draw[very thick,blue!65!black] (0,.5)--(1/6,2/3);
 \draw[very thick,blue!65!black] (1/6,2/3)--(.25,1);
 \draw[very thick,blue!65!black] (.25,1)--(.58,1.33);
 \fill[blue!65!black] (1/6,2/3) circle (1.8pt);
 \fill[blue!65!black] (.25,1) circle (1.8pt);
 \node[below] at (0,.4) {$0$};
 \node[below] at (1/6,.4) {$1/6$};
 \node[below] at (.25,.4) {$1/4$};
 \node[left] at (0,.5) {$1/2$};
 \node[left] at (0,1) {$1$};
 \node[above,align=center] at (.08,.73) {$\alpha=1/2+\beta$};
 \node[above left] at (.25,1.04) {$\alpha=4\beta$};
 \node[below right] at (.34,1.06) {$\alpha=3/4+\beta$};
\end{tikzpicture}
\caption{The exponent of the accuracy-dependent coefficient:
for $B=n^\beta$, $\beta\ge0$, and growing $n$,
$\rate(n,B)=\Theta(n^{\alpha(\beta)})$.
The complete complexity includes the additive $n$ and multiplicative
$A$. The first two regimes match within constants; the third has
one logarithmic upper loss.}
\label{fig:phase-main}
\end{figure}

\begin{theorem}[Two upper bounds under averaged smoothness]
\label{thm:upper-main}
For all parameters in \cref{sec:model-main}, including $A<A_*$,
there are randomized algorithms with the following fixed budgets:
\begin{align}
 U_{\mathrm{avg}}&\le
 C\bigl[n+(1+A)(\sqrt n\,\bar B+\bar B^4)\bigr],
 \label{eq:general-upper}\\
 U_{\mathrm{acc}}&\le
 C(1+A)(n+n^{3/4}B)\ell_{n,A,B}.
 \label{eq:accelerated-main}
\end{align}
The first algorithm satisfies
$\E\norm{\nabla\Phi_{1/(2L)}(\widehat x)}^2<\varepsilon^2/200$;
the second returns an $\varepsilon$-stationary point of
$\Phi_{1/(2L)}$ with probability at least $3/4$.
Both require only AS($L$) and average dual concavity.
\end{theorem}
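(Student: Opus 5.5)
We prove the two bounds separately. Both rely on the same reduction: approximately evaluating the proximal map of the $L$-weakly convex $\Phi$. For a center $c$, the regularized problem $\min_x\max_{y\in Y}\{f(x,y)+L\|x-c\|^2\}$ is $L$-strongly convex in $x$ and concave in $y$. Its primal solution is $p(c)$, and $\nabla\Phi_{1/(2L)}(c)=2L(c-p(c))$. An outer proximal-point loop decreases $\Phi_{1/(2L)}$ by order $\|\nabla\Phi_{1/(2L)}\|^2/L$ per exact step. Hence $O(1+A)$ outer steps suffice, provided each inner subproblem is solved to accuracy $\delta$ in primal distance with $L\delta\lesssim\varepsilon$. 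The inner errors must be controlled in a summable way. The outer descent is the standard inexact-proximal-point inequality $\Phi_{1/(2L)}(c_{k+1})\le\Phi_{1/(2L)}(c_k)-\|\nabla\Phi_{1/(2L)}(c_k)\|^2/(8L)+O(L\delta_k^2)$. Summing it and choosing a uniformly random iterate gives the required expected squared-gradient bound.

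\textbf{Bound \eqref{eq:general-upper}.} The first algorithm is not nested in this way. It runs a single-loop SPIDER/PAGE-type scheme on a smoothed surrogate. We add dual regularization $-(\mu/2)\|y-y_0\|^2$ with $\mu\asymp\varepsilon^2/(LD_Y^2)$, which perturbs $\Phi$ uniformly by at most $\mu D_Y^2/2\lesssim\varepsilon^2/L$. The surrogate $\Phi_\mu$ is then smooth with constant $\kappa L$, where $\kappa=L/\mu\asymp\bar B^2$.

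The persistent estimator maintains recursive gradient differences $v_{t+1}=v_t+\frac1b\sum_{i\in S}(\nabla f_i(z_{t+1})-\nabla f_i(z_t))$. Its variance is controlled by AS($L$) alone, since only the mean-squared difference bound enters the martingale variance. It also uses a full refresh with probability $p$. The dual variable is updated by projected ascent, and the primal by averaged descent steps of size $\asymp1/(L\bar B^2)$. Primal averaging keeps the dual tracking error, which scales like $\kappa\|x_{t+1}-x_t\|$, compatible with the variance budget.

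We then count queries. There are $T\asymp A\bar B^2$ iterations. With batch size $b\asymp\max\{\sqrt n/\bar B,\bar B^2\}$ and refresh probability $p\asymp b/n$, the cost is $n+T b\asymp n+A(\sqrt n\bar B+\bar B^4)$. The $\bar B^4$ term arises when the batch is forced to order $\kappa$ to make the dual tracking contractive. Finally, a constant-factor choice of step sizes gives the bound $\varepsilon^2/200$.

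\textbf{Bound \eqref{eq:accelerated-main}.} Here we keep the prox-point outer loop with $O(1+A)$ stages. Each strongly-convex–concave subproblem is solved by an accelerated variance-reduced method. Concretely, we use a Katyusha/Catalyst-SVRG scheme applied to the dual function $\psi_c(y)=\min_x\{f(x,y)+L\|x-c\|^2\}$, with proximal steps in $y$ over $Y$.

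The inner minimization in $x$ is $L$-strongly convex and $3L$-smooth, so it is solved cheaply and warm-started. By Danskin's theorem, $\psi_c$ is $O(L)$-smooth concave, and accelerated SVRG without strong concavity reaches dual gap $\eta$ in $\widetilde O(n+n^{3/4}\sqrt{LD_Y^2/\eta})$ component queries. Taking $\eta\asymp\varepsilon^2/L$ gives $\widetilde O(n+n^{3/4}B)$ per stage.

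The explicit primal certificate recovers $x$ from an approximate dual point. We set $\hat x=\arg\min_x\{f(x,\hat y)+L\|x-c\|^2\}$, and strong convexity gives $\|\hat x-p(c)\|^2\lesssim\text{gap}/L$. The gap is certified by evaluating a duality-gap bound computed from the averaged iterates. A fixed budget with a Markov/median step then turns the expected accuracy into probability $3/4$.

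\textbf{Main obstacle.} We expect the main obstacle to be the dual tracking in \eqref{eq:general-upper}. The difficulty is showing that a biased, persistent estimator combined with projection keeps the error $\|y_t-y^*_\mu(x_t)\|$ bounded without any individual smoothness, and that the induced primal gradient error is dominated by $\varepsilon$. Our first attempt would be a Lyapunov function $\Phi_\mu(x_t)+\alpha\|y_t-y^*_\mu(x_t)\|^2+\beta\|v_t-\nabla f(z_t)\|^2$, with coefficients tuned to $\kappa$.
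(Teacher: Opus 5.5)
Both halves have genuine gaps, and the serious one is in \eqref{eq:general-upper}, exactly where you place the ``main obstacle.'' The nested proximal framework of your first paragraph is the right one there. The single-loop SPIDER--GDA you substitute for it is not yet an argument: $T\asymp A\bar B^2$ and the batch sizes are asserted rather than derived, and the natural accounting does not close.

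A larger batch cannot make dual tracking contractive. A projected ascent step contracts only by $1-\Theta(1/\kappa)$ whatever the batch, while $y^*_\mu(\cdot)$ is $\kappa$-Lipschitz. The standard two-timescale analysis then requires primal steps of order $1/(L\kappa^2)$, i.e.\ about $A\bar B^4$ iterations.

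More fundamentally, you target the direct gradient of $\Phi_\mu$. Its sensitivity to a persistent dual estimator error $\xi_y$ is $L\norm{\xi_y}/\mu=\kappa\norm{\xi_y}$. Even if only primal movement entered your joint-gradient recursion, this weight $\kappa^2$ on squared dual errors would give the NC--SC scale $\sqrt n\,\kappa=\sqrt n\,\bar B^2$. Along a tracking trajectory the dual increments, up to $\kappa\norm{x_{t+1}-x_t}$, enter the variance as well.

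The paper obtains $\sqrt n\,\bar B$ from two ideas absent from your plan. First, the persistent estimator tracks only the anchor $G(c_t,y^0)$ at the \emph{fixed} dual point $y^0$, so only the small movements of the proximal centers enter its variance. Second, its possibly biased error is handled by analyzing the tilted proximal saddle. There \cref{lem:bias} gives $\norm{p_\xi-p}^2\le\norm{\xi_x}^2/L^2+\norm{\xi_y}^2/(L\mu)$, i.e.\ the Moreau target has only $\sqrt\kappa$ sensitivity to dual errors. Consequently $q\asymp\sqrt{n/\kappa}$ and $b\asymp\kappa q$ suffice, at cost $\sqrt{n\kappa}\asymp\sqrt n\,\bar B$ per center over $O(1+A)$ centers. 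The $\bar B^4$ term is not a batch size at all. It is the $K\asymp\kappa^2$ fresh-sample projected steps per center, whose averaged primal iterate has error $O((\norm{p_\xi-c}^2+D_Y^2)/\kappa)$, and $D_Y^2/\kappa=O(\varepsilon^2/L^2)$ precisely because of the choice of $\mu$ (\cref{lem:inner}).

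For \eqref{eq:accelerated-main}, $\psi_c(y)=\min_x\{f(x,y)+L\norm{x-c}^2\}$ is not a finite sum. The minimum couples all components, so there is no component oracle for $\psi_c$, and Katyusha-type (or sum-of-nonconvex) accelerated SVRG cannot be run on it. Each stochastic dual gradient requires the best response of the full average, an $\widetilde O(n)$ solve per new $y$ that warm starting does not remove. The per-stage cost therefore reverts to the full-gradient $\widetilde O(nB)$.

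``Catalyst with proximal steps in $y$'' points in the right direction, but the $n^{3/4}$ comes from a balance you never state. The paper takes accelerated proximal steps on the genuine convex dual objective with parameter $\tau=L/\sqrt n$. Each step is a strongly-convex--strongly-concave saddle problem in $(x,y)$ jointly, with mean-squared condition number $4L/\tau=4\sqrt n$. Non-accelerated projected SVRG solves it in $O(n+(L/\tau)^2)=O(n)$ queries, and $K=O(1+B/n^{1/4})$ such steps are needed. This also requires an actual primal--dual gap bound for the weighted primal average under inexact inner solves (\cref{hd:lem:accelerated-primal}). It further needs a public compact primal ball (\cref{hd:lem:center-gradient}) so that the SVRG distance bounds are uniform.

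Your dual-gap-to-primal-distance inequality $\norm{x^*(\hat y)-p(c)}^2\le2\eta/L$ is correct. However, it needs a valid bound on the dual gap of $\hat y$, which the averaged iterates do not certify by themselves. Your ``Markov/median'' step must also become one of two things: a checkable residual with a union bound over the $O(1+A)$ stages (the source of the logarithm in \eqref{eq:accelerated-main}), or an expectation version of the inexact accelerated recursion.
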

The algorithms and complete proofs are in \cref{sec:upper,hd:sec:upper}.
The second guarantee is a probability guarantee with a checked abort
rule; we do not claim a squared-expectation guarantee for its abort
output. In the small-diameter regimes the first bound is used, so
there is no hidden logarithm. For $B\ge n^{1/4}$, the inequality
$n^{3/4}B\ge n$ absorbs the repeated full-pass cost in the second
bound.

\begin{theorem}[Complementary lower bounds under individual smoothness]
\label{thm:main}
For universal $A_0,c>0$, whenever $A\ge A_0$,
\begin{align}
 \T&\ge c\bigl[n+A\min\{\sqrt n\,\bar B,n^{3/4}\}\bigr],
 \label{eq:new-lower}\\
 \T&\ge c\bigl[n+A\min\{B^4,n^{3/4}B\}\bigr].
 \label{eq:replicated-main}
\end{align}
These bounds apply to the unrestricted randomized oracle model of
\cref{sec:model-main}, including exact function values.
\end{theorem}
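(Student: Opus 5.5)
The proof splits into the additive term $n$ and the two accuracy-dependent terms, each handled by its own hard family from the IS($L$) class. Taking the maximum of the resulting bounds gives \eqref{eq:new-lower} and \eqref{eq:replicated-main}. The $n$ term is standard. We take $n$ components, all zero except one hidden index $i^\star$ drawn uniformly; that component carries a single smooth bump of height about $\Delta$ along a fixed direction, and the dual variable is not used. An algorithm that makes fewer than $n/2$ queries misses $i^\star$ with probability above $1/2$. Since exact values and gradients of the zero components reveal nothing, its output is independent of the bump direction. We scale the bump so that $\norm{\nabla\Phi_{1/(2L)}}>\varepsilon$ on a set of probability above $1/3$ for a random rotation. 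The condition $A\ge A_0$ is exactly what makes the bump height, which is of order $\varepsilon^2/L$ times a constant, fit inside $\Delta$.

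For \eqref{eq:new-lower}, the plan is a chain construction that randomizes both the component index and the rotation, in the style of the finite-sum lower bounds of Carmon et al., adapted to randomized queries. We take $T\asymp A$ chain links. Link $t$ is assigned to a uniformly random component $i_t$, and the links act along random orthonormal directions $u_t$ in very high dimension. To progress past link $t$, the algorithm must query $i_t$ at a point correlated with $u_{t-1}$. The dual variable enters through a cone construction: component $i_t$ contains a term $\ip{y}{g(x)}$ over a ball of radius $D_Y$, whose maximization produces $D_Y\norm{g(x)}$, a norm-cone that is nonsmooth. This cone term amplifies the Moreau stationarity gap in the small-diameter regime, so each link requires roughly $\bar B$ "fresh" progress events. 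Finding the responsible component costs of order $\sqrt n$ queries per event, using the standard balance between the number of components and the per-component scale $\sqrt n$ that IS($L$) permits after rescaling. This gives $A\sqrt n\bar B$, and the scaling saturates at $\bar B\approx n^{1/4}$, which yields the cap $An^{3/4}$.

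For \eqref{eq:replicated-main}, the plan is a serial branch construction with effective component count $m\asymp\min\{B^4,n\}$. When $m\ge B^4$, the $n$ components are partitioned into $m$ groups of equal size, each group replicating one effective component. Replication preserves IS($L$) and the average, and it multiplies the cost of locating a specific effective component by $n/m$. Within the effective instance, each of the $A$ links branches over $\bar B$ serial dual levels. A random query reveals the next branch only through the hidden index, so each level costs about $m^{3/4}$, or about $B^3$ after accounting for the scaling. The total is then $AB^4$ when $B\le n^{1/4}$, and $An^{3/4}B$ when $m=n$. The exponentially large dimension is used so that random branch directions remain nearly orthogonal to every query made so far.

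The main obstacle is the simulation and no-progress lemma against unrestricted randomized queries with exact values, arbitrary norms, and unqueried outputs. I would prove it in three steps. First, replace each hidden function by a "revealed-so-far" surrogate that agrees with it exactly, in value and gradient, on the current query unless that query has inner product at least $1/4$ with an unrevealed direction. Second, release sources chronologically and bound the probability of such a correlation by $\exp(-cd)$ per query via concentration on the sphere, then take a union bound over the fixed budget. Third, show that any output uncorrelated with the final unrevealed direction has a Moreau gradient larger than $\varepsilon$. This last step needs an explicit lower bound on $\norm{x-p(x)}$ that is robust to the nonsmooth cone term, which I expect to be the most delicate calculation.
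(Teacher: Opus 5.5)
\paragraph{Additive term and first bound.} Both fail for the same reason under IS($L$). Anything carried by a single component enters the average with weight $1/n$, and individual smoothness does not let you rescale a component to compensate.

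In your hidden-bump instance the average is $f_{i^\star}/n$, hence $(L/n)$-smooth. Therefore $\|\nabla\Phi_{1/(2L)}(x^0)\|^2\le 2(L/n)\Delta$, and the zero-query output $x^0$ already succeeds whenever $A\le n/2$. That range contains the whole regime $A\lesssim\sqrt n$ where the additive $n$ dominates. The paper instead puts the common quadratic $\frac L2x^2$ into every component and only a linear tilt $-nLsRx$ into the hidden one. Linear terms cost nothing in individual smoothness, and the average $\frac L2x^2-LsRx$ has gap $\Delta$ and Moreau gradient $\frac{2L}{3}(x-sR)$.

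Your ``$\sqrt n$ queries per event from the per-component scale $\sqrt n$'' is the averaged-smoothness device: under AS components may be $\sqrt nL$-smooth, but not under IS. With single-component links under IS, balancing the $1/n$ dilution against the gap gives only order $A$ queries. Locating one hidden component also costs $\Theta(n)$ queries, not $\sqrt n$, and nothing in your sketch explains why the cone multiplies the number of events by $\bar B$.

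The missing idea is to hide each stage source as the average $v_j=n^{-1/2}\sum_iw_{ij}$ of a Haar orthonormal $n$-frame. Component $i$ then sees only $\frac1RG_{j-1}\langle\zeta(z_j)-\sqrt n\,w_{ij},y_j\rangle$. This term is linear in $y$, so the large coefficient $\sqrt n/R$ is compatible with IS once the gate scale $q$ satisfies $Rq\gtrsim\sqrt n$ and $q^2\gtrsim\sqrt n$; the second condition is exactly the cap $R\lesssim n^{1/4}$. Maximizing the average over the ball gives the gated cone residual $\big(\sum_jG_{j-1}^2\|\zeta(z_j)-v_j\|^2\big)^{1/2}$. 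A conditional spherical estimate keeps an unfinished stage exactly inactive until a constant fraction of its rows has been queried. So each stage costs $\Omega(n)$ while its clock consumes only $O(q)$ of the gap, with $q=\sqrt n/R$. This gives $M\asymp A/q$ stages (not $A$ links) and the bound $\Omega(A\sqrt nR)$: the dual radius lowers the gap per stage rather than multiplying events. For $B\lesssim1$ you still need the IS smooth finite-sum bound $\Omega(\sqrt nA)$ for unrestricted randomized algorithms, which the paper imports from Peng et al.

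\paragraph{Second bound.} Your architecture is the paper's: serial branches, effective count $s\asymp\min\{n,B^4\}$, balanced replication, and exponential dimension. The internal accounting, however, is wrong.

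Replication multiplies no cost by $n/m$. It is a one-query simulation: a query to $f_i$, $i\in I_j$, is answered by one query to $g_j$ scaled by $n/(sr_j)$, so the $s$-component bound $As^{3/4}B$ transfers unchanged. Its purpose is that the branch family needs $B\gtrsim s^{1/4}$. Since in general $s\nmid n$, the groups are unequal. That is why the paper reserves a factor $2$ in smoothness and compares the Moreau parameters $1/(2L)$ and $1/L$.

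``$\bar B$ levels at cost $m^{3/4}$'' also does not describe the count. There are $M\asymp A$ stages, each with $N-1\asymp B/s^{1/4}$ orthogonal-branch edges. Each edge costs $\Omega(s)$ by the chronological reachable-mass bound $p_t(K)\le\rho^{-r_K}\binom Kt/s^t$. $N$ is limited because the eliminated dual optimizer has norm of order $s^{1/4}N$ times the connector norm, since the averaged isometry carries the factor $s^{-1/2}$.

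Your no-progress step, ``correlation is $\exp(-cd)$-unlikely per query,'' cannot carry this part. Forward matrix actions legitimately propagate the hidden direction one edge at a time. What is needed instead is:
\begin{itemize}
\item the mass accounting with release charges;
\item a Gaussian reservoir for arbitrary-norm inputs;
\item a stationarity certificate valid at outputs off the source line, using the single frontier and connector coercivity proved before removing the dual constraint.
\end{itemize}
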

The first bound combines the cone construction in \cref{app:cone}
with the smooth finite-sum baseline. The second follows from the
branch construction in \cref{app:branch} and the replication
reduction in \cref{repl:section}. The maximum of the two lower
bounds, combined with \cref{thm:upper-main} and class inclusion,
proves \cref{global:thm:main}; the complete comparison is in
\cref{sec:global}.

\paragraph{Accuracy range.}
The restriction $A\ge A_*$ is a universal nontrivial-accuracy
threshold, with no additional $n$- or $B$-dependent condition.
It is necessary to separate such a range from trivial accuracy:
$\norm{\nabla\Phi_{1/(2L)}(x^0)}^2\le4L\Delta$ implies that
$A\le1/4$ allows a zero-query output. We do not determine the sharp
constant transition between these two ranges.

\section{Related Work and Precise Comparisons}
\label{sec:related-main}
\paragraph{Nonconvex--concave upper bounds.}
Proximal methods attain the full-gradient polynomial rate
$AB=L^2D_Y\Delta\varepsilon^{-3}$ for Moreau stationarity:
Prox-DIAG~\cite[Theorem~2]{thekumparampil},
Lin et al.~\cite[Corollary~21]{lin2020}, and
Catalyst~\cite[Corollary~4.2]{catalyst}.
Zhao's Euclidean, unconstrained-primal, common-$L$ specialization
has the same high-accuracy scale~\cite[Corollary~4.2]{zhao2024}.
Their bounds retain logarithmic factors.
For finite sums, Yang et al.'s Corollary~4.3, with SVRG/SAGA,
gives the leading term $\widetilde O(A n^{3/4}B)$~\cite{catalyst}.
Our corresponding case is $B\ge n^{1/4}$, where
$n+A\rate(n,B)=\Theta(A n^{3/4}B)$.
Catalyst uses the arithmetic average of individual smoothness constants
and expected work; under $L_i\le L$, the polynomial scales agree.
Our upper bound instead uses mean-squared AS and a fixed query budget.

\begin{table}[p]
\caption{Prior benchmarks and special cases of our unified rate.
Here $A=L\Delta/\varepsilon^2$, $B=LD_Y/\varepsilon$,
$\bar B=\max\{1,B\}$, and our rows assume $A\ge A_*$.
The second rate line sets $L=\Delta=D_Y=1$, except in the smooth row,
where $L=\Delta=1$ and $D_Y=0$.}
\label{tab:prior-main}
\centering
\small
\setlength{\tabcolsep}{5pt}
\renewcommand{\arraystretch}{1.12}
\begin{tabularx}{\linewidth}{@{}>{\raggedright\arraybackslash}p{.25\linewidth}>{\raggedright\arraybackslash}p{.34\linewidth}>{\raggedright\arraybackslash}X@{}}
\toprule
\textbf{Work / regime} & \textbf{Rate} & \textbf{Correspondence / scope}\\
\midrule
\multicolumn{3}{@{}l}{\textbf{Prior benchmarks}}\\[4pt]
Full-gradient methods~\cite{thekumparampil,lin2020,catalyst,zhao2024}
& $\widetilde O(AB)$\newline
  $\widetilde O(\varepsilon^{-3})$
& NC--C upper bounds; common Euclidean setting.\\[5pt]
Yang et al.\ (2020)~\cite{catalyst}, Cor.~4.3
& $\widetilde O(A n^{3/4}B)$\newline
  $\widetilde O(n^{3/4}\varepsilon^{-3})$
& Finite-sum SVRG/SAGA upper bound; leading high-accuracy term.\\[5pt]
Wu et al.\ (2026 preprint)~\cite{qwu2026}
& $\Omega(AB)$\newline
  $\Omega(\varepsilon^{-3})$
& Deterministic, zero-respecting lower bound; constrained primal domain.\\[5pt]
Pan et al.\ (2026 preprint)~\cite{pzl}
& $\Theta(AB)$\newline
  $\Theta(\varepsilon^{-3})$
& Unrestricted deterministic full-gradient complexity; no logarithmic gap.\\[4pt]
\midrule
\multicolumn{3}{@{}l}{\textbf{This paper: unrestricted randomized component queries}}\\[4pt]
\multicolumn{3}{@{}p{\linewidth}@{}}{%
\textbf{Unified rate:}\quad
$\widetilde\Theta\!\left(n+A\min\{\sqrt n\,\bar B+\bar B^4,\,n^{3/4}\bar B\}\right)$
\quad(\cref{global:thm:main}).}\\[7pt]
\textbf{Single component}\newline
$n=1$, $B\ge1$
& $\widetilde\Theta(AB)$\newline
  $\widetilde\Theta(\varepsilon^{-3})$
& Matches the full-gradient upper scale and Pan's lower scale;
allows randomized queries.$^{\dagger}$\\[7pt]
\textbf{Small / zero diameter}\newline
$0\le B\le1$
& $\Theta(n+\sqrt n\,A)$\newline
  $\Theta(n+\sqrt n\,\varepsilon^{-2})$ at $D_Y=0$
& At $B=0$, recovers the smooth finite-sum benchmarks~\cite{spider,page,peng};
$n=1$ further gives $\Theta(A)$~\cite{carmon}.\\[7pt]
\textbf{First diameter regime}\newline
$1\le B\le n^{1/6}$
& $\Theta(n+A\sqrt n\,B)$\newline
  $\Theta(n+\sqrt n\,\varepsilon^{-3})$
& Resolves the first diameter-dependent branch; no logarithmic gap.\\[7pt]
\textbf{Intermediate regime}\newline
$n^{1/6}\le B\le n^{1/4}$
& $\Theta(n+AB^4)$\newline
  $\Theta(n+\varepsilon^{-6})$
& Identifies the $B^4$ crossover; no logarithmic gap.\\[7pt]
\textbf{Large diameter}\newline
$B\ge n^{1/4}$
& $\widetilde\Theta(n+A n^{3/4}B)$\newline
  $\widetilde\Theta(n+n^{3/4}\varepsilon^{-3})$
& Recovers Catalyst's polynomial upper scale and supplies a matching
randomized lower bound.\\[3pt]
\bottomrule
\end{tabularx}
\par\vspace{5pt}
\begin{minipage}{\linewidth}
\footnotesize
\textit{Conventions.} Prior NC--C rows show high-accuracy rates for
positive $D_Y$ under their original assumptions. Full-gradient and
component-query units coincide at $n=1$. Matching an upper rate does
not attribute a lower bound to the cited paper. Our upper bounds use
AS; our lower bounds already hold under IS. A tilde in our rows hides
only one upper-bound logarithm.
\par\vspace{2pt}
$^{\dagger}$Combining our $n=1$ lower bound with Pan et al.'s
$O((1+A)\bar B)$ upper bound~\cite[Theorem~3]{pzl} gives
\textbf{$\Theta(AB)$} for $B\ge1$. This log-free upper bound is inherited
from that work. The smooth lower bound is the imported
benchmark~\cite{peng}, with the transfer proved in \cref{sec:moreau-facts}.
\end{minipage}
\end{table}

\paragraph{Special cases of the unified rate.}
\Cref{tab:prior-main} places the established benchmarks and our
specializations in one table. The two endpoint cases are $n=1$,
$B\ge1$ for full-gradient NC--C optimization and $D_Y=0$ for smooth
finite sums. The large-diameter branch recovers Catalyst's polynomial
upper scale; the intervening branches describe the diameter crossover.
Under $L=\Delta=D_Y=1$, the first diameter range is
$n^{-1/6}\le\varepsilon\le1$ with
$\Theta(n+\sqrt n\,\varepsilon^{-3})=\Theta(n)$;
the intermediate range is
$n^{-1/4}\le\varepsilon\le n^{-1/6}$ with
$\Theta(n+\varepsilon^{-6})=\Theta(\varepsilon^{-6})$;
and the large-diameter range is $\varepsilon\le n^{-1/4}$.
All ranges are intersected with $\varepsilon^{-2}\ge A_*$.

\paragraph{Smooth finite sums and variance reduction.}
At $D_Y=0$, the rate $\Theta(n+\sqrt n A)$ matches the smooth
upper bounds of SPIDER~\cite[Theorem~2]{spider},
SpiderBoost~\cite[Theorem~1]{spiderboost},
SSRGD~\cite[Theorem~1]{ssrgd},
PAGE~\cite[Corollary~2]{page},
ProxSARAH with $\psi=0$~\cite[Theorem~6(c)]{proxsarah}, and
SARAH~\cite[author manuscript, Corollary~2]{sarah2022}.
For ProxSARAH, rounding the epoch count retains the additive $n$;
SARAH's squared-gradient tolerance is set to $\varepsilon^2$.
These are unconstrained first-order specializations.
For an $L$-smooth objective $F$, the direct and Moreau gradient norms
are equivalent within factors $1/2$ and $3/2$, as follows from the
argument in \cref{lem:transfer}.

The matching published lower benchmarks have specific scopes:
SPIDER's Theorem~3 assumes $2\le n\le cA^2$;
PAGE's Theorem~2 restricts algorithms to linear spans.
Zhou--Gu's Theorem~4.5 matches at weak curvature $\sigma=L$ and
$A\ge10^5\sqrt n$~\cite{zhougu2019};
Han et al.'s Theorem~68 matches at $\mu=L$, $n>270$, and
$A\ge435456\sqrt n$ in a restricted PIFO model~\cite{han2024}.
The latter is a numerical rate comparison, not a claim about
component-proximal access in our oracle.
At $n=1$, $D_Y=0$, the rate reduces to the classical
$\Theta(A)$ benchmark~\cite[Theorem~2, $p=1$]{carmon}.
Our unrestricted randomized IS baseline is imported from
Peng et al.~\cite[Theorem~3.1]{peng}; its Moreau and parameter-range
transfers are proved in \cref{sec:moreau-facts}.

\paragraph{Recent NC--C bounds.}
Wu, Gu, and Yang~\cite[Theorem~4.1]{qwu2026} prove an $AB$ lower bound
for deterministic zero-respecting algorithms on a constrained primal
domain. Pan, Zheng, and Li~\cite[Theorems~2--3]{pzl} obtain the
unrestricted deterministic one-component rate and a log-free upper bound.
Our lower bounds allow private randomization and arbitrary component
queries; Wu et al.'s full constrained-primal theorem is not a special
case of our $X=\R^d$ model.
Li et al.~\cite[Theorem~7(ii)]{foam2026} also attain the $AB$ polynomial
upper scale: at proximal initialization $z^0=x^0$, their regularized
gap satisfies $\Delta_{p_0}\le\Delta+O(\varepsilon^2/L)$.
These works and Peng et al.\ are preprints.
Our randomized lifting follows robust-chain and random-rotation ideas
of Carmon et al.~\cite[Sections~3.4 and~5]{carmon}, with componentwise
transcript accounting and a terminal-output test.
Related convex--concave acceleration results~\cite{luounbalanced}
impose componentwise curvature, whereas our inner-solver proof uses
strong monotonicity only of the average.

\paragraph{Nonconvex--strongly-concave problems.}
For NC--SC problems, the dual curvature parameter $\mu>0$ defines
$\kappa=L/\mu$, and the target is usually the direct primal gradient.
Zhang et al. study the condition-number dependence under
linear-span restrictions~\cite{zhang2021}. Zhou obtains
\begin{equation}\label{eq:zhou-main}
 \widetilde\Theta\!\left(n+A\min\{\sqrt n\,\kappa,
                                      n^{3/4}\sqrt\kappa\}\right)
\end{equation}
for $n\ge2$, $\kappa\ge1$, and $\varepsilon^2\le cL\Delta$,
for expected query counts in an AS, randomized zero-respecting
model with unconstrained dual variables and exact dual
initialization~\cite[Theorem~1]{zhou-as}. A fixed-budget
characterization is stated there for $\kappa\le\sqrt n$.
\Cref{tab:models-main} records the distinctions that prevent an
automatic transfer to our result.

\begin{table}[t]
\caption{Model distinctions from the NC--SC comparison in
Zhou~\cite{zhou-as}. The two settings share the AS convention and differ in the
features listed below.}
\label{tab:models-main}
\centering
\small
\setlength{\tabcolsep}{5pt}
\renewcommand{\arraystretch}{1.18}
\begin{tabularx}{\linewidth}{@{}>{\raggedright\arraybackslash}p{.24\linewidth}>{\raggedright\arraybackslash}X>{\raggedright\arraybackslash}X@{}}
\toprule
\textbf{Feature} & \textbf{This paper} & \textbf{Zhou (2026)} \\
\midrule
Dual structure & Concave average; bounded known $Y$ &
Strongly concave average; $Y=\R^{d_y}$ \\[7pt]
Initialization & Primal gap and dual diameter &
Primal gap and exact $y^*(x^0)$ \\[7pt]
Stationarity & $\norm{\nabla\Phi_{1/(2L)}}\le\varepsilon$ with probability $2/3$ &
$\E\norm{\nabla\Phi}^2\le\varepsilon^2$ \\[7pt]
Lower-bound algorithms & Unrestricted randomized & Randomized zero-respecting \\[7pt]
Oracle information & Component value and joint gradient & Component joint gradient \\[7pt]
Budget in main characterization & Fixed & Expected; fixed in the smaller-$\kappa$ regime \\
\bottomrule
\end{tabularx}
\end{table}

Dual regularization in NC--C creates an effective
$\kappa=\Theta(\bar B^2)$, but substituting this into
\eqref{eq:zhou-main} is only a formal rate comparison. For a smooth
value function with Lipschitz-gradient constant $L_\Phi$,
\begin{equation}\label{eq:stationarity-comparison-main}
 \norm{\nabla\Phi(x)}\le
 \left(1+\frac{L_\Phi}{2L}\right)
 \norm{\nabla\Phi_{1/(2L)}(x)}.
\end{equation}
Indeed, apply smoothness between $x$ and its proximal point, where
the two gradients agree. The factor can be of order $\kappa$ in
NC--SC. Our result therefore does not resolve the individually
smooth NC--SC gap identified by Zhou.

\paragraph{Different stochastic or stationarity models.}
Song and Xu study shared-sample stochastic queries with a
zero-respecting restriction~\cite{songxu}; Wendao Wu et al. study
unrestricted randomized NC--SC algorithms under fresh-gradient
sampling~\cite{wwu2026}. Neither oracle supplies persistent,
arbitrarily selectable component identities. Recent single-loop
finite-sum methods also target game-stationarity residuals rather
than the Moreau gradient of the maximized value~\cite{jiang2025}.
Their rates are not directly comparable without an oracle or
stationarity reduction. The 2026 results cited above are arXiv
preprints; our literature comparison uses the versions identified
in the bibliography.

\section{Algorithms and Upper-Bound Mechanisms}
\label{sec:algorithms-main}
Both algorithms use only two consequences of AS. First, the average
signed gradient $G=(\nabla_xf,-\nabla_yf)$ is $L$-Lipschitz.
Second, a fresh uniform minibatch of differences obeys
\begin{equation}\label{eq:variance-main}
 \E\left[\norm{\frac1b\sum_{j=1}^b
   [G_{I_j}(z)-G_{I_j}(z')]-(G(z)-G(z'))}^2
     \middle|\mathcal H\right]
 \le\frac{L^2}{b}\norm{z-z'}^2,
\end{equation}
where the points are measurable before the batch is sampled.
\Cref{as:lem:moments} proves this statement, its weighted version,
and stability under common regularization. No step size uses
$\sqrt nL$ as a replacement smoothness constant.

\subsection{Persistent estimates and independent primal averaging}
Choose $y^0\in Y$, and regularize the dual objective by
$-(\mu/2)\norm{y-y^0}^2$, with
$\mu\asymp\min\{L,\varepsilon^2/(LD_Y^2)\}$ and $\mu=L$ when
$D_Y=0$. Let $\Psi$ be its maximized value and
$\kappa=L/\mu=\Theta(\bar B^2)$. Since
$0\le\Phi-\Psi\le\mu D_Y^2/2$, Moreau stability controls the
resulting squared gradient error by $2L\mu D_Y^2$.

At a center $c_t$, we approximately solve
\[
 \min_x\max_{y\in Y}
 \left[f(x,y)-\frac\mu2\norm{y-y^0}^2+L\norm{x-c_t}^2\right].
\]
An estimate of $G(c_t,y^0)$ is retained across centers, refreshed
periodically, and updated by independent difference batches.
The inner loop uses fresh samples and averages its primal iterates.
\Cref{alg:averaging-main} summarizes the implementation; all
numerical constants and stopping horizons are in \cref{sec:upper}.

\begin{algorithm}[t]
\caption{Proximal averaging with a persistent estimator}
\label{alg:averaging-main}
\small
Choose the regularization $\mu$, $\kappa=L/\mu$, and a valid weighted
variance bound $\gamma\in[1,\kappa]$; use $\gamma=\kappa$ under AS.
Set $q=\max\{1,\lfloor\sqrt{n/\gamma}\rfloor\}$,
$b=\lceil2^{12}\gamma q\rceil$, and
$S=\lceil2^{12}(1+A)\rceil$. Initialize $c_0=x^0$.
\begin{enumerate}
\item At the first center of each epoch of $q$ centers, compute
$v_t=G(c_t,y^0)$ by a full pass. Otherwise update
\[
 v_t=v_{t-1}+\frac1b\sum_{i\in I_t}
 [G_i(c_t,y^0)-G_i(c_{t-1},y^0)].
\]
The batch is fresh and sampled with replacement; when $b>n$ use
exact differences. When $q=1$ every anchor is exact.
\item Starting from $z_0=(c_t,y^0)$, perform the projected updates
\eqref{eq:inner} for $K=\lceil2^{32}\kappa^2\rceil$ steps with
fresh independent indices and step size $\mu/(2^{16}L^2)$.
Let $c_{t+1}$ be the average of the $K$ primal iterates.
\item After $S$ centers, return $c_J$ for an independent uniform
$J\in\{0,\ldots,S-1\}$.
\end{enumerate}
\end{algorithm}

The existing anchor error $\xi_t=v_t-G(c_t,y^0)$ can be biased
conditional on $c_t$. The key is to analyze the corresponding
\emph{biased} saddle point. If $p_t$ and $p_{\xi_t}$ are the exact
primal solutions without and with that bias, then
\begin{equation}\label{eq:sensitivity-main}
 \norm{p_{\xi_t}-p_t}^2\le
 \frac{\norm{\xi_t^x}^2+\kappa\norm{\xi_t^y}^2}{L^2}.
\end{equation}
Fresh anchor increments form martingale differences, so their total
weighted error is bounded by
$\gamma q/b$ times the total squared movement of the centers.
The inner average converts this into a proximal error small enough
to absorb in the envelope descent inequality. The resulting cost is
\begin{equation}\label{eq:weighted-main}
 O\!\left(n+(1+A)(\sqrt{n\gamma}+\kappa^2)\right).
\end{equation}
AS gives $\gamma=\kappa$, proving \eqref{eq:general-upper}.
For the common-interaction subclass
$f_i(x,y)=H_0(x,y)+g_i(x)+r_i(y)$, the centered dual increments
vanish and $\gamma=1$. This yields the additional bound
$O(n+(1+A)(\sqrt n+\bar B^4))$; no subclass matching claim is made.

\subsection{Accelerated dual proximal steps with a primal certificate}
For a primal center $c$, define
$F_c(x,y)=f(x,y)+L\norm{x-c}^2$ and the convex dual objective
$H_c(y)=-\min_xF_c(x,y)$. Acceleration is applied to this convex
objective through dual proximal subproblems with parameter
$\tau=L/\sqrt n$. It is not applied to a general monotone operator
as though that operator were a gradient.

A projected-SVRG solver handles each strongly convex--strongly
concave subproblem. Under mean-squared operator smoothness $\ell$
and average strong monotonicity $\tau$, its epoch length is
$O(\ell^2/\tau^2)$. Here $\ell=4L$ and hence each epoch costs
$O(n)$. Two full passes construct and check an actual normal-cone
residual. The primal domain used internally is a public compact
ball derived from the initial primal gap and $D_Y$, rather than an
extra assumption on the original problem.

The main certificate, proved in \cref{hd:lem:accelerated-primal}, is
\begin{equation}\label{eq:primal-certificate-main}
 \max_{y\in Y}F_c(\bar x,y)-\min_{x\in X_c}F_c(x,y_K)
 \le\frac{2\tau D_Y^2}{(K+1)^2}
       +K(\diam(X_c)+D_Y)\zeta.
\end{equation}
Here $\bar x$ is an explicitly weighted primal average and $\zeta$
is the checked residual tolerance for each inner solve. This is a
primal--dual gap, not merely a dual objective gap. Taking
$K=O(1+B/n^{1/4})$ makes the error $O(\varepsilon^2/L)$.
There are $O(1+A)$ outer centers, and a public residual-failure
budget produces a single logarithmic factor. This proves
\eqref{eq:accelerated-main}. All oracle evaluations remain in $Y$
even when an extrapolated dual center lies outside it.

\section{Lower-Bound Mechanisms}
\label{sec:lower-main}
The two lower families are independent constructions. They use
different local normalizations but are scaled to the same physical
$n,L,\Delta,D_Y,\varepsilon$ and the same Moreau parameter.

\subsection{A cone construction for the small-diameter regime}
In each of $M$ stages, sample a Haar orthonormal frame
$w_{1j},\ldots,w_{nj}$ and define the unit source
$v_j=n^{-1/2}\sum_iw_{ij}$. A component query can disclose one row
of the current frame. The dual coupling is affine and arranged so
that maximizing over a single Euclidean ball produces the norm
\begin{equation}\label{eq:cone-main}
 \left(\sum_{j=1}^M G_{j-1}^2
             \norm{\zeta(z_j)-v_j}^2\right)^{1/2},
\end{equation}
where $\zeta$ is a smooth bounded radial map and $G_j$ is a smooth
exit gate. A separate clock potential contains the source latch
additively inside its scalar activation. This placement keeps
individual smoothness bounded while the clock has scale
$q=\sqrt n/R$, with dual radius $R$.

The stationarity certificate has two steps. Every sufficiently small
subgradient first forces all gates to be exactly zero or one. If
the residual in \eqref{eq:cone-main} is nonzero, its normalized
gradient has a nonnegative inner product with every correction and
therefore cannot be small. The residual must vanish, which then
forces every source to be recovered and every clock to finish.
Moreau stationarity implies approximate recovery at the original
output through the distance to its proximal point.

For unrestricted queries, an ideal transcript censors the hidden
parts of unfinished stages. A uniform conditional spherical estimate
shows that their scalar values and complete first derivatives are
exactly inactive until a constant fraction of the frame rows has
been disclosed. Applying the estimate to normalized query vectors
also covers arbitrary query norms and the unqueried output. Thus a
stage costs $\Omega(n)$ calls. The gap is $O(Mq)$, so after scaling
$M\asymp A/q$ gives $\Omega(A\sqrt nB)$ until the smoothness cap
$R\lesssim n^{1/4}$. If the permitted gap is too small for even one
stage at this scale, the desired term is $O(n)$ and a separate
hidden-component instance supplies it. The formulas and full proof
are in \cref{app:cone}.

\subsection{Serial orthogonal branches for the large-diameter regime}
A branch edge has $n$ individually bounded linear maps with mutually
orthogonal images. Their average is an isometry scaled by
$n^{-1/2}$. Random rotations hide the vertex coordinate systems
before the algorithm starts. A stage containing $N-1$ such edges is
embedded into a quadratic dual module. Its eliminated endpoint
energy is uniformly coercive, while its dual optimizer has norm
of order $n^{1/4}N$ times the connector norm.

The information lower bound allows a stronger oracle than a
component query: in one round it can act on every edge, in both
directions, with separately chosen indices. All inputs in a round
are chosen before any reply from that round. It also releases a source
vector once a specified fraction of its coordinates is reachable.
If $p_t(K)$ is the reachable mass at depth $t$ after $K$ rounds and
$r_K$ releases, then
\begin{equation}\label{eq:mass-main}
 p_t(K)\le\min\left\{1,\rho^{-r_K}\frac{\binom Kt}{n^t}\right\}.
\end{equation}
The binomial term enforces chronological propagation. A release
may increase existing reachable mass by at most $\rho^{-1}$;
cached probes from before the release remain in the accounting.
When there are $M$ stages and $H=M(N-1)$ edges, the release cost is
dominated by the propagation deficit for $K<c nH$.

A lazy-rotation argument represents all new arbitrary inputs using
independent Gaussian directions. A sufficiently large finite
multiplicity makes their accidental terminal correlation small.
This yields $\Omega(nH)$ calls even with arbitrary transpose probes,
source releases, and an unqueried terminal output. The underlying
instance law is a fixed product-Haar distribution, independent of
the algorithm's private seed.

The nonlinear outer assembly has a single possible frontier at a
small-gradient point. This fact bounds the \emph{total} connector
norm independently of $M$. Connector coercivity is proved for the
constrained dual value first; only afterwards is the unconstrained
dual optimizer shown to lie in the prescribed ball. Thus the domain
radius is $O(n^{1/4}N)$ rather than acquiring a $\sqrt M$ factor.
The initial gap is $O(M)$, giving
\begin{equation}\label{eq:scaling-main}
 M\asymp A,\quad N\asymp B/n^{1/4},\quad
 nM(N-1)\asymp A n^{3/4}B
\end{equation}
in the high-diameter range. Exact flatness of every hidden nonlinear
value and first derivative connects the matrix experiment to the
actual component oracle. \Cref{app:branch} provides the complete
construction and simulation.

\subsection{Why the intermediate coefficient is \texorpdfstring{$B^4$}{B to the fourth power}}
For $B$ above an absolute constant, use an $s$-component hard instance with
$s\asymp\min\{n,B^4\}$ so that $B$ is large relative to $s^{1/4}$.
Balanced replication creates exactly $n$ components while preserving
the average objective and requiring one original query per
simulated component query. The high-diameter lower bound becomes
\[
 A s^{3/4}B\asymp A\min\{B^4,n^{3/4}B\}.
\]
When $s$ does not divide $n$, the replication weights differ. We
reserve a factor of two in component smoothness and explicitly
compare the resulting two Moreau parameters. This proves the
reduction with integer component counts and the original success
criterion; see \cref{repl:balanced-replication,repl:lower-envelope}.

\section{Scope and Further Questions}
\label{sec:limitations-main}
The result concerns exact component-query complexity. It does not
bound arithmetic time, bit complexity, storage, or the cost of
projecting onto an arbitrary known convex domain. The lower-bound
dimension is finite but may grow exponentially with the serialized
branch length. Obtaining comparable bounds in a fixed or polynomially
bounded dimension remains a separate question.

The equality of IS and AS rates uses average dual concavity. Imposing
dual concavity on every component gives a smaller class, and the
branch lower construction does not establish the same characterization
there. The common-interaction refinement in \eqref{eq:weighted-main}
also indicates that additional structure can change the diameter
dependence. Removing the remaining logarithm for the general
large-diameter finite-sum class, and determining sharp constants in
the transition from zero-query to nontrivial accuracy, are further
questions. No empirical performance or practical tuning claim is
needed for the oracle characterization.

\clearpage
\appendix
\part*{Appendix: Complete Proofs}
The appendices supply all construction and algorithmic proofs used
in the main theorem. The only external lower-bound input is the
smooth finite-sum theorem of Peng et al.~\cite{peng}; its precise
oracle and stationarity transfer is proved in
\cref{sec:moreau-facts}. Numerical constants in the two lower
constructions are local to their respective normalizations.

\section{Averaged Smoothness and Moreau Calculus}
\label{app:calculus}
Throughout the algorithmic proofs, the normal cone uses the outward convention
\[
 N_C(z)=\{v:\langle v,u-z\rangle\le0\text{ for every }u\in C\}.
\]
For a positive semidefinite matrix $W$, write $\|v\|_W^2=v^\mathsf TWv$.
All expectations conditional on a history are taken before the fresh
indices in question are sampled. In particular, a current estimator
error is not assumed centered conditional on an iterate that depends on it.

\subsection{Mean-squared bounds and regularized operators}
\begin{lemma}[Consequences of averaged smoothness]\label{as:lem:moments}
Suppose the signed component gradients $G_i=(\nabla_x f_i,-\nabla_y f_i)$
satisfy
\begin{equation}\label{as:eq:moment}
 \frac1n\sum_{i=1}^n\norm{G_i(z)-G_i(z')}^2
 \le L^2\norm{z-z'}^2
 \qquad(z,z'\in\mathbb R^{d_x}\times Y).
\end{equation}
Then the following statements hold with the same $L$.
\begin{enumerate}[label=(\roman*),leftmargin=2em]
\item The average $G=n^{-1}\sum_iG_i$ is $L$-Lipschitz. The average
function $f$ has a jointly $L$-Lipschitz gradient, and
$\Phi(x)=\max_{y\in Y}f(x,y)$ is $L$-weakly convex.
\item Let $z,z'$ be measurable with respect to a history $\mathcal H$.
For independent uniform indices $I_1,\ldots,I_b$, fresh relative to
$\mathcal H$, write
\[
 d_i=G_i(z)-G_i(z'),\quad \bar d=\frac1n\sum_i d_i,\quad
 \xi=\frac1b\sum_{j=1}^b d_{I_j}-\bar d.
\]
Then $\mathbb E[\xi\mid\mathcal H]=0$ and
\begin{equation}\label{as:eq:conditional}
 \mathbb E[\norm{\xi}^2\mid\mathcal H]
 =\frac1b\left(\frac1n\sum_i\norm{d_i}^2-\norm{\bar d}^2\right)
 \le\frac{L^2}{b}\norm{z-z'}^2.
\end{equation}
For $W_\kappa=\operatorname{diag}(I_{d_x},\kappa I_{d_y})$,
$\kappa\ge1$, the corresponding weighted estimate is
\begin{equation}\label{as:eq:weighted}
 \frac1n\sum_i\norm{d_i-\bar d}_{W_\kappa}^2
 \le \kappa L^2\norm{z-z'}^2.
\end{equation}
\item Let $R$ be a common $r$-Lipschitz map, and $b_0$ any common
constant vector. The operators $T_i(z)=G_i(z)+R(z)+b_0$ satisfy
\begin{equation}\label{as:eq:regularized}
 \left(\frac1n\sum_i\norm{T_i(z)-T_i(z')}^2\right)^{1/2}
 \le (L+r)\norm{z-z'}.
\end{equation}
No bound on the size of $b_0$ or the affine component offsets is required.
\end{enumerate}
\end{lemma}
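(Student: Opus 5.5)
Each part reduces to Jensen's inequality, a variance identity, or the triangle inequality in $L^2$ over the uniform index.

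\textbf{Part (i).} Jensen gives $\norm{G(z)-G(z')}^2\le\frac1n\sum_i\norm{G_i(z)-G_i(z')}^2\le L^2\norm{z-z'}^2$. Since $\nabla f$ differs from $G$ only by the sign of the $y$-block, it has the same norm differences and is therefore $L$-Lipschitz.

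For weak convexity, I would first note that for each fixed $y$ the map $x\mapsto f(x,y)$ has an $L$-Lipschitz gradient. Hence $f(\cdot,y)+(L/2)\norm{\cdot}^2$ is convex: its gradient is monotone, because $\langle\nabla_xf(x,y)-\nabla_xf(x',y),x-x'\rangle\ge-L\norm{x-x'}^2$. Then $\Phi+(L/2)\norm{\cdot}^2$ is a pointwise supremum of convex functions, hence convex. It is finite because $Y$ is compact and $f$ is continuous.

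\textbf{Part (ii).} Conditional on $\mathcal H$, the vectors $d_i$ are fixed. The summands $d_{I_j}-\bar d$ are then i.i.d., have mean zero, and have second moment $\frac1n\sum_i\norm{d_i-\bar d}^2=\frac1n\sum_i\norm{d_i}^2-\norm{\bar d}^2$. The cross terms vanish by independence, which gives the stated equality with the factor $1/b$. The inequality follows by dropping $-\norm{\bar d}^2$ and applying \eqref{as:eq:moment}.

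One measure-theoretic point needs stating: $z,z'$ are $\mathcal H$-measurable and the indices are independent of $\mathcal H$. The freezing lemma then justifies treating the $d_i$ as constants when conditioning. This is the only place where care is needed.

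For the weighted estimate, use $\norm{v}_{W_\kappa}^2\le\kappa\norm{v}^2$, valid since $\kappa\ge1$. The variance identity holds in any inner-product norm, so
\[
\frac1n\sum_i\norm{d_i-\bar d}_{W_\kappa}^2\le\kappa\,\frac1n\sum_i\norm{d_i-\bar d}^2\le\kappa\,\frac1n\sum_i\norm{d_i}^2\le\kappa L^2\norm{z-z'}^2.
\]

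\textbf{Part (iii).} Write $T_i(z)-T_i(z')=d_i+(R(z)-R(z'))$. The constant $b_0$ cancels, and any affine offsets inside $G_i$ cancel in $d_i$, so no size bound is needed. Apply Minkowski's inequality in $L^2$ over the uniform index. The first term contributes at most $L\norm{z-z'}$ by \eqref{as:eq:moment}. The second term is constant in $i$ and has norm at most $r\norm{z-z'}$. Adding the two yields \eqref{as:eq:regularized}.
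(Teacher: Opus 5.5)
Your proposal is correct and takes essentially the same route as the paper. It uses Jensen's inequality with the orthogonal sign flip and a supremum of convex functions for (i), the conditional i.i.d.\ variance expansion for (ii), and Minkowski's inequality in the averaged $L^2$ norm for (iii). The only difference is the order of steps in the weighted estimate: you apply $W_\kappa\preceq\kappa I$ before centering in the Euclidean norm, whereas the paper centers in the $W_\kappa$-norm first, and the two orders are equivalent.
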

\begin{proof}
The sign change on the dual gradient is an orthogonal transformation.
Jensen's inequality gives
\[
 \norm{G(z)-G(z')}^2
 \le \frac1n\sum_i\norm{G_i(z)-G_i(z')}^2.
\]
For each $y$, the function $f(\cdot,y)+(L/2)\norm{\cdot}^2$ is convex;
taking its maximum over $Y$ proves weak convexity of $\Phi$.
Conditional on $\mathcal H$, the points are fixed and the samples are
i.i.d.; expanding the squared norm proves \eqref{as:eq:conditional}.
This argument is conditional on the history \emph{before} the fresh
indices are drawn, not on a later iterate depending on them.
For any positive semidefinite $W$, centering gives
\[
 \frac1n\sum_i\norm{d_i-\bar d}_W^2
 =\frac1n\sum_i\norm{d_i}_W^2-\norm{\bar d}_W^2.
\]
Use $W_\kappa\preceq\kappa I$ and \eqref{as:eq:moment} to obtain
\eqref{as:eq:weighted}. Finally, Minkowski's inequality in the Hilbert
space with norm $(n^{-1}\sum_i\norm{a_i}^2)^{1/2}$ yields
\[
 \left(\frac1n\sum_i\norm{(G_i(z)-G_i(z'))+(R(z)-R(z'))}^2\right)^{1/2}
 \le L\norm{z-z'}+\norm{R(z)-R(z')}.
\]
The common constant cancels, proving \eqref{as:eq:regularized}.
\end{proof}

\subsection{Weak convexity, perturbation, and stationarity transfer}
\begin{lemma}[Weak convexity and perturbation]\label{lem:moreau}\label{as:lem:moreau}
The function \(\Phi\) is \(L\)-weakly convex. Its unique proximal point
\(p(c)=\argmin_x\{\Phi(x)+L\norm{x-c}^2\}\) satisfies
\[
\nabla\Phi_{1/(2L)}(c)=2L(c-p(c)),\qquad
\norm{\nabla\Phi_{1/(2L)}(x^0)}^2\le4L\Delta.
\]
In particular, \(A\le1/4\) permits a zero-query output.
If \(\phi,\psi\) are bounded-below \(L\)-weakly convex functions and
\(0\le\phi-\psi\le r\), then
\[
\norm{\nabla\phi_{1/(2L)}(c)-\nabla\psi_{1/(2L)}(c)}^2\le4Lr.
\]
\end{lemma}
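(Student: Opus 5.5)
Weak convexity of $\Phi$ is already part of \cref{as:lem:moments}(i): for each $y$, $f(\cdot,y)+(L/2)\norm{\cdot}^2$ is convex, and a pointwise maximum of convex functions is convex. For the proximal point, observe that $x\mapsto\Phi(x)+L\norm{x-c}^2$ equals the convex function $\Phi+(L/2)\norm{\cdot}^2$ plus $(L/2)\norm{x}^2-2L\ip{x}{c}+L\norm c^2$. It is therefore $L$-strongly convex. It is also lower semicontinuous: $\Phi$ is finite and weakly convex on $\R^{d_x}$, hence continuous. Since $\Phi$ is bounded below, the function is coercive, so the minimizer $p(c)$ exists and is unique.

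For the gradient formula I would use the standard Moreau-envelope argument. Write $\Phi_{1/(2L)}(c)=\min_x\{\Phi(x)+L\norm{x-c}^2\}$. Comparing the values at $c$ and $c'$ with the candidate $p(c)$ gives the upper bound
\[
\Phi_{1/(2L)}(c')-\Phi_{1/(2L)}(c)\le 2L\ip{c-p(c)}{c'-c}+L\norm{c'-c}^2 .
\]
Swapping the roles of $c$ and $c'$ gives a matching lower bound involving $p(c')$. Closing that lower bound needs Lipschitz continuity of $p$, which I would derive from strong convexity. Alternatively, write $\Phi_{1/(2L)}(c)=L\norm c^2-h^*(2Lc)$, where $h=\Phi+L\norm{\cdot}^2$ is $L$-strongly convex. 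Then $h^*$ is differentiable with $\nabla h^*(2Lc)=p(c)$, which gives the formula at once. I would take the conjugate route, as it is cleanest.

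For the initial bound, put $p=p(x^0)$ and use the definition of $p$:
\[
\Phi(p)+L\norm{p-x^0}^2\le\Phi(x^0).
\]
Hence $L\norm{p-x^0}^2\le\Phi(x^0)-\inf\Phi\le\Delta$, and so $\norm{\nabla\Phi_{1/(2L)}(x^0)}^2=4L^2\norm{x^0-p}^2\le4L\Delta$. If $A\le1/4$, then $4L\Delta\le\varepsilon^2$, so outputting $x^0$ succeeds with no queries.

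The perturbation bound is the main step. Fix $c$ and let $p_\phi,p_\psi$ be the two proximal points. The objective $h_\psi(x)=\psi(x)+L\norm{x-c}^2$ is $L$-strongly convex with minimizer $p_\psi$, so
\[
h_\psi(p_\phi)-h_\psi(p_\psi)\ge(L/2)\norm{p_\phi-p_\psi}^2 ,
\]
and the same holds for $h_\phi$ with the roles exchanged. Add the two inequalities. The quadratic terms cancel and the left side becomes
\[
[\phi(p_\psi)-\psi(p_\psi)]-[\phi(p_\phi)-\psi(p_\phi)]\le r,
\]
using $0\le\phi-\psi\le r$. This yields $L\norm{p_\phi-p_\psi}^2\le r$, and therefore
\[
\norm{\nabla\phi_{1/(2L)}(c)-\nabla\psi_{1/(2L)}(c)}^2=4L^2\norm{p_\phi-p_\psi}^2\le4Lr .
\]
The only delicate point is confirming the strong-convexity modulus $L$, rather than $2L$, for $\psi+L\norm{\cdot-c}^2$ when $\psi$ is only $L$-weakly convex. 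With modulus $L$ the constant comes out exactly as $4Lr$.
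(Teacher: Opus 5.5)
Your proof is correct and follows the paper's argument step for step. The steps are: weak convexity via the pointwise maximum over $y$; $L$-strong convexity and coercivity of the proximal objective; comparison with the test point $x^0$ to get $L\norm{p(x^0)-x^0}^2\le\Delta$; and adding the two strong-convexity inequalities at the opposite minimizers to obtain $L\norm{p_\phi-p_\psi}^2\le r$. Your one addition is the conjugate identity $\Phi_{1/(2L)}(c)=L\norm c^2-h^*(2Lc)$, which spells out the gradient formula that the paper justifies only by saying it follows from differentiating the uniquely minimized value.
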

\begin{proof}
Each \(f(\cdot,y)+(L/2)\norm{\cdot}^2\) is convex; taking the maximum
proves weak convexity. The proximal objective is \(L\)-strongly convex
and coercive. The gradient identity follows by differentiating its
uniquely minimized value. Minimality gives
\(L\norm{p(x^0)-x^0}^2\le\Phi(x^0)-\Phi(p(x^0))\le\Delta\).
For the perturbation statement, let \(p,q\) be the two proximal points.
Add the two strong-convexity inequalities evaluated at the opposite
minimizer:
\[
L\norm{p-q}^2\le(\phi-\psi)(q)-(\phi-\psi)(p)\le r.
\]
Multiply by \(4L\).
\end{proof}

\begin{lemma}[Smooth-to-Moreau transfer]\label{lem:transfer}
For a bounded-below \(L\)-smooth function \(F\),
\[
\norm{\nabla F(x)}\le\frac32\norm{\nabla F_{1/(2L)}(x)}.
\]
\end{lemma}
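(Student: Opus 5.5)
Let \(p=p(x)\) denote the proximal point of \(F\) at \(x\), i.e. the unique minimizer of \(F(u)+L\norm{u-x}^2\). By \cref{lem:moreau} (applied with \(\Phi\) replaced by \(F\), which is \(L\)-weakly convex since it is \(L\)-smooth), we have \(\nabla F_{1/(2L)}(x)=2L(x-p)\). Bounded-belowness ensures that the proximal objective is coercive and the minimizer exists. So the plan reduces the claim to comparing \(\nabla F(x)\) with \(2L(x-p)\).

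\textbf{Step 1: optimality at the proximal point.} The function \(F\) is differentiable, so the first-order condition for the proximal problem gives
\[
\nabla F(p)+2L(p-x)=0,
\qquad\text{that is,}\qquad
\nabla F(p)=2L(x-p)=\nabla F_{1/(2L)}(x).
\]
Thus the two gradients agree at \(p\), which is exactly the remark made after \eqref{eq:stationarity-comparison-main}.

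\textbf{Step 2: Lipschitz transport back to \(x\).} Using \(L\)-smoothness, we get
\[
\norm{\nabla F(x)}\le\norm{\nabla F(p)}+L\norm{x-p}
=\norm{\nabla F_{1/(2L)}(x)}+\tfrac12\norm{\nabla F_{1/(2L)}(x)},
\]
since \(L\norm{x-p}=\tfrac12\cdot 2L\norm{x-p}\). This gives the factor \(3/2\).

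\textbf{Main obstacle.} The only delicate point is justifying that \(p\) is well defined and that the envelope gradient formula holds for a general smooth \(F\), which need not be convex. Both follow from strong convexity of \(u\mapsto F(u)+L\norm{u-x}^2\): its modulus is \(2L-L=L>0\). This is the same argument already recorded in \cref{lem:moreau}, so I would simply cite it.

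For the companion direction mentioned in the text (the factor \(1/2\)), the same steps give
\[
\norm{\nabla F_{1/(2L)}(x)}\le\norm{\nabla F(x)}+\tfrac12\norm{\nabla F_{1/(2L)}(x)},
\]
so \(\norm{\nabla F_{1/(2L)}(x)}\le 2\norm{\nabla F(x)}\). This direction is not needed for the stated lemma.
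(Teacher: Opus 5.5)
Your proof is correct and matches the paper's argument: first-order optimality at the proximal point gives $\nabla F(p)=2L(x-p)=\nabla F_{1/(2L)}(x)$, and $L$-smoothness then bounds $\norm{\nabla F(x)-\nabla F(p)}\le L\norm{x-p}=\tfrac12\norm{\nabla F_{1/(2L)}(x)}$. Your justification of the proximal point through the modulus-$L$ strong convexity of $u\mapsto F(u)+L\norm{u-x}^2$ and your companion factor-$2$ bound are both sound.
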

\begin{proof}
At its proximal point \(p\),
\[
\nabla F(p)=2L(x-p)=\nabla F_{1/(2L)}(x).
\]
Smoothness gives
\[
\norm{\nabla F(x)-\nabla F(p)}\le L\norm{x-p}
=\frac12\norm{\nabla F(p)}.
\]
\end{proof}

\begin{lemma}[Comparison of two Moreau gradients]
\label{repl:moreau-comparison}
Let $F:\mathbb R^d\to\mathbb R$ be continuous, bounded below, and
$\rho$-weakly convex, with $\rho\ge0$.  If
$0<\lambda\le\nu$ and $\rho\nu<1$, then
\begin{equation}
 \|\nabla F_\nu(x)\|
 \le \frac{1-\rho\lambda}{1-\rho\nu}
          \|\nabla F_\lambda(x)\|
 \qquad(x\in\mathbb R^d).
 \label{repl:moreau-inequality}
\end{equation}
In particular, for $\rho=L/2$, $\lambda=1/(2L)$, and $\nu=1/L$,
\begin{equation}
 \|\nabla F_{1/L}(x)\|
 \le \tfrac32\|\nabla F_{1/(2L)}(x)\|.
 \label{repl:factor-three-halves}
\end{equation}
\end{lemma}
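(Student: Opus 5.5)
The plan is to reduce everything to the proximal points. For $\rho$-weakly convex $F$ and $\rho\nu<1$, the proximal objective $F(u)+\frac1{2\nu}\|u-x\|^2$ is $(1/\nu-\rho)$-strongly convex and coercive, since $F$ is bounded below. It therefore has a unique minimizer $p_\nu(x)$, and
\[
 \nabla F_\nu(x)=\frac1\nu\bigl(x-p_\nu(x)\bigr),
\]
exactly as in \cref{lem:moreau}. The same holds with $\lambda$ in place of $\nu$, because $\rho\lambda\le\rho\nu<1$. Write $p=p_\lambda(x)$, $q=p_\nu(x)$, $g_\lambda=(x-p)/\lambda$, and $g_\nu=(x-q)/\nu$. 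The optimality conditions in the Fréchet (weakly convex) subdifferential read
\[
 g_\lambda\in\partial F(p),\qquad g_\nu\in\partial F(q).
\]
Continuity of $F$ is all that is needed for these subdifferentials to be well defined.

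Next I would use monotonicity of $\partial F+\rho I$, which is the hypomonotonicity of $\partial F$:
\[
 \langle g_\lambda-g_\nu,\,p-q\rangle\ge-\rho\|p-q\|^2 .
\]
I then substitute $p=x-\lambda g_\lambda$ and $q=x-\nu g_\nu$, so that $p-q=\nu g_\nu-\lambda g_\lambda$. This yields a quadratic inequality in the two vectors $g_\lambda,g_\nu$, namely
\[
 \langle g_\lambda-g_\nu,\ \nu g_\nu-\lambda g_\lambda\rangle+\rho\|\nu g_\nu-\lambda g_\lambda\|^2\ge0 .
\]
Expanding, the coefficient of $\|g_\nu\|^2$ is $-\nu(1-\rho\nu)$, the coefficient of $\|g_\lambda\|^2$ is $-\lambda(1-\rho\lambda)$, and the cross term is $(\lambda+\nu-2\rho\lambda\nu)\langle g_\lambda,g_\nu\rangle$. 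I bound the cross term by Cauchy--Schwarz, using $\lambda+\nu-2\rho\lambda\nu\ge0$, which holds because $\rho\lambda,\rho\nu<1$. Writing $s=\|g_\nu\|$ and $t=\|g_\lambda\|$, this gives
\[
 \nu(1-\rho\nu)s^2-(\lambda+\nu-2\rho\lambda\nu)st+\lambda(1-\rho\lambda)t^2\le0 .
\]

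The key algebraic step, which I expect to be the main obstacle (mostly bookkeeping), is to factor this quadratic. The factorization is
\[
 \bigl(\nu(1-\rho\nu)s-\lambda(1-\rho\lambda)t\bigr)(s-t)\le0 .
\]
One checks it by expanding: the cross coefficient is $\nu-\rho\nu^2+\lambda-\rho\lambda^2$, which is not $\lambda+\nu-2\rho\lambda\nu$. Since $\rho\lambda^2+\rho\nu^2\ge2\rho\lambda\nu$, the true middle term dominates, so the factored form is only an upper bound. I will therefore solve the quadratic directly instead. Its roots in $s/t$ are $r_\pm=\bigl[(\lambda+\nu-2\rho\lambda\nu)\pm|\nu-\lambda|\bigr]/\bigl(2\nu(1-\rho\nu)\bigr)$, using the discriminant identity
\[
 (\lambda+\nu-2\rho\lambda\nu)^2-4\lambda\nu(1-\rho\lambda)(1-\rho\nu)=(\nu-\lambda)^2 .
\]
The larger root is $r_+=(\nu-\rho\lambda\nu)/(\nu(1-\rho\nu))=(1-\rho\lambda)/(1-\rho\nu)$. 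Hence $s\le r_+t$, which is \eqref{repl:moreau-inequality}.

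Finally, $\rho=L/2$, $\lambda=1/(2L)$, and $\nu=1/L$ give $\rho\nu=1/2<1$ and the ratio $(3/4)/(1/2)=3/2$, which proves \eqref{repl:factor-three-halves}. If $t=0$, then $p=x$ is stationary, so $x$ is a critical point that is also a proximal fixed point for $\nu$ by uniqueness, and the inequality holds trivially. That case is also covered by the quadratic inequality, which forces $s=0$.
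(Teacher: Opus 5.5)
Your argument is correct and reaches the paper's constant by a different route.

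The paper first turns hypomonotonicity into the single-parameter resolvent bound $\|P_t(u)-P_t(v)\|\le(1-\rho t)^{-1}\|u-v\|$. It then uses the shift identity $p=P_\nu(p+\nu g)$ with $g=\nabla F_\lambda(x)$, and closes with a triangle inequality around $p$. You instead apply hypomonotonicity once, directly to the two proximal points $P_\lambda(x)$ and $P_\nu(x)$ taken from the same base point, and solve the resulting quadratic inequality in $(\|g_\nu\|,\|g_\lambda\|)$. Your expansion checks out, as do the positivity of $\lambda+\nu-2\rho\lambda\nu$, the discriminant identity $(\nu-\lambda)^2$, the larger root, and the $t=0$ case.

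Each route has an advantage. The paper's is more modular: the resolvent estimate is reusable and essentially no algebra is needed. Yours is a one-shot computation, and its smaller root also gives the reverse bound $\|\nabla F_\nu(x)\|\ge(\lambda/\nu)\|\nabla F_\lambda(x)\|$, which the paper's triangle-inequality argument does not produce.

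In a write-up, delete the retracted factorization. The quadratic does factor exactly:
\[
 \bigl((1-\rho\nu)s-(1-\rho\lambda)t\bigr)\bigl(\nu s-\lambda t\bigr)\le0 .
\]
Since $\lambda/\nu\le1\le(1-\rho\lambda)/(1-\rho\nu)$, this forces $s\le\frac{1-\rho\lambda}{1-\rho\nu}\,t$ immediately, without computing the discriminant.
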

\begin{proof}
For $t\rho<1$, the proximal objective is strongly convex and has a unique
minimizer, denoted $P_t(x)$.  Hypomonotonicity of the subdifferential of
$F$ gives
\[
 \|P_t(u)-P_t(v)\|\le(1-\rho t)^{-1}\|u-v\|.
\]
Indeed, the proximal optimality conditions and
$\langle a-b,p-q\rangle\ge-\rho\|p-q\|^2$ for
$a\in\partial F(p)$, $b\in\partial F(q)$ imply
\[
 (1-\rho t)\|P_t(u)-P_t(v)\|^2
 \le\langle u-v,P_t(u)-P_t(v)\rangle.
\]
Write $p=P_\lambda(x)$ and
$g=\nabla F_\lambda(x)=(x-p)/\lambda\in\partial F(p)$.
The same optimality condition at parameter $\nu$ implies
$p=P_\nu(p+\nu g)$.  Since $x=p+\lambda g$, proximal Lipschitzness gives
\[
 \|P_\nu(x)-p\|
 \le\frac{\nu-\lambda}{1-\rho\nu}\|g\|.
\]
Consequently,
\[
 \|\nabla F_\nu(x)\|
 =\frac1\nu\|x-P_\nu(x)\|
 \le\left(\frac\lambda\nu+
       \frac{\nu-\lambda}{\nu(1-\rho\nu)}\right)\|g\|
 =\frac{1-\rho\lambda}{1-\rho\nu}\|g\|.
\]
The three-halves factor follows by direct substitution.
\end{proof}

\section{Proximal Averaging with a Persistent Estimator}\label{sec:upper}
Throughout this section assume AS as in \eqref{as:eq:moment} and
the model of \cref{sec:model-main}. We give all algorithmic parameters,
query budgets, and probability guarantees explicitly. The same argument
also gives a structural refinement.
For the common-interaction subclass
\begin{equation}\label{eq:common-form}
f_i(x,y)=H_0(x,y)+g_i(x)+r_i(y),
\end{equation}
it yields
\begin{equation}\label{eq:common-upper}
T\le C[n+(1+A)(\sqrt n+\barB^4)].
\end{equation}
The cone hard family is outside this subclass.

\subsection{Regularization and weighted centered variance}
Choose \(y^0\in Y\). For \(D_Y>0\), set
\[
\mu=\min\{L,\eps^2/(2^{20}LD_Y^2)\},\qquad
\kappa=L/\mu=\max\{1,2^{20}B^2\};
\]
for \(D_Y=0\), set \(\mu=L\). Let
\[
f_\mu(x,y)=f(x,y)-\frac\mu2\norm{y-y^0}^2,\quad
\Psi(x)=\max_Y f_\mu(x,y),\quad \mathcal E=\Psi_{1/(2L)}.
\]
Then, writing \(r=\mu D_Y^2/2\),
\begin{equation}\label{eq:mu-transfer}
0\le\Phi-\Psi\le r\le\frac{\eps^2}{2^{21}L},\qquad
\norm{\nabla\Phi_{1/(2L)}-\nabla\mathcal E}^2\le\frac{\eps^2}{2^{19}}.
\end{equation}
The second inequality follows from \cref{as:lem:moreau}; both functions
are $L$-weakly convex by AS of the components and Jensen's inequality.
Define signed gradients
\(G_i=(\nabla_xf_i,-\nabla_yf_i)\) and \(G=n^{-1}\sum_iG_i\).
The sign change is an isometry, so the $G_i$ satisfy the same
mean-squared AS inequality as the original gradients. The average $G$
is $L$-Lipschitz by \cref{as:lem:moments}.
For anchors \((c,y^0),(c',y^0)\), put
\(d_i=G_i(c',y^0)-G_i(c,y^0)\), \(\bar d=n^{-1}\sum_i d_i\).
Assume a known \(\gamma\in[1,\kappa]\) satisfies
\begin{equation}\label{eq:gamma}
\frac1n\sum_i\left(\norm{d_i^x-\bar d^x}^2+
\kappa\norm{d_i^y-\bar d^y}^2\right)
\le\gamma L^2\norm{c'-c}^2.
\end{equation}
Universally $\gamma=\kappa$ works already under AS: centering in
the weighted norm and using $W_\kappa\preceq\kappa I$ gives
\[
 \frac1n\sum_i\norm{d_i-\bar d}_{W_\kappa}^2
 \le\kappa\frac1n\sum_i\norm{d_i}^2
 \le\kappa L^2\norm{c'-c}^2.
\]
Under \cref{eq:common-form}, the dual increments are identical across
components, so the weighted variance is just the centered primal
variance; AS then gives $\gamma=1$. No explicit decomposition is needed
by the algorithm.

\subsection{A biased proximal saddle}
For center \(c\), define
\[
Q_c(x,y)=f_\mu(x,y)+L\norm{x-c}^2,\qquad
T_c(x,y)=G(x,y)+(2L(x-c),\mu(y-y^0)).
\]
The operator is \(3L\)-Lipschitz and satisfies
\begin{equation}\label{eq:monotone}
\ip{T_c(z)-T_c(z')}{z-z'}
\ge L\norm{x-x'}^2+\mu\norm{y-y'}^2.
\end{equation}
Indeed, the strong convexity in \(x\) and strong concavity in \(y\) give
\[
\ip{T_c(z)}{z-z'}\ge
Q_c(x,y')-Q_c(x',y)+\frac L2\norm{x-x'}^2+
\frac\mu2\norm{y-y'}^2.
\]
Interchanging \(z,z'\) and adding proves \cref{eq:monotone}.
For every fixed vector \(\xi=(\xi_x,\xi_y)\), a sufficiently small-step
projection map for \(T_c+\xi\) is a contraction on
\(\R^{d_x}\times Y\), so it has a unique fixed point. This is the saddle
solution for the correspondingly tilted objective.
Write the unbiased and biased solutions as
\(z_*=(p,y_*)\) and \(z_\xi=(p_\xi,y_\xi)\).

\begin{lemma}[Primal sensitivity]\label{lem:bias}
\[
\norm{p_\xi-p}^2\le
\frac{\norm{\xi_x}^2}{L^2}+\frac{\norm{\xi_y}^2}{L\mu}
=:\mathsf B(\xi).
\]
\end{lemma}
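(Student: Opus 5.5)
The plan is to compare the two variational inequalities that characterize $z_*$ and $z_\xi$ and to exploit the strong monotonicity \eqref{eq:monotone}. On the domain $\R^{d_x}\times Y$ the unbiased solution satisfies $\ip{T_c(z_*)}{u-z_*}\ge0$ for every admissible $u$, and the biased solution satisfies $\ip{T_c(z_\xi)+\xi}{u-z_\xi}\ge0$. This is the fixed-point characterization just described: the primal component is unconstrained, so it gives an equation, and the dual component gives a normal-cone inclusion over $Y$.

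First, take $u=z_\xi$ in the first inequality and $u=z_*$ in the second, then add. This gives
\[
 \ip{T_c(z_\xi)-T_c(z_*)}{z_\xi-z_*}\le-\ip{\xi}{z_\xi-z_*}.
\]
Second, bound the left side from below by \eqref{eq:monotone}, which yields
\[
 L\norm{p_\xi-p}^2+\mu\norm{y_\xi-y_*}^2
 \le\norm{\xi_x}\norm{p_\xi-p}+\norm{\xi_y}\norm{y_\xi-y_*}.
\]
Third, apply Young's inequality separately in each block:
\[
 \norm{\xi_x}\norm{p_\xi-p}\le\frac{\norm{\xi_x}^2}{2L}+\frac L2\norm{p_\xi-p}^2,
 \qquad
 \norm{\xi_y}\norm{y_\xi-y_*}\le\frac{\norm{\xi_y}^2}{2\mu}+\frac\mu2\norm{y_\xi-y_*}^2.
\]
After absorbing the quadratic terms and discarding the remaining nonnegative dual term, we obtain
\[
 \frac L2\norm{p_\xi-p}^2\le\frac{\norm{\xi_x}^2}{2L}+\frac{\norm{\xi_y}^2}{2\mu}.
\]
Multiplying by $2/L$ gives exactly $\mathsf B(\xi)$.

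The only point needing care is the legitimacy of the variational characterization. I would justify it from the fact, stated before the lemma, that the fixed point of the projection map $z\mapsto\Pi_{\R^{d_x}\times Y}(z-\eta(T_c(z)+\xi))$ exists and is unique. The projection characterization then gives $\ip{T_c(z_\xi)+\xi}{u-z_\xi}\ge0$ for all $u\in\R^{d_x}\times Y$, and the case $\xi=0$ gives the inequality for $z_*$. No smoothness of the value function is needed, and the constants match the statement without loss.
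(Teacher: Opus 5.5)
Your proof is correct and matches the paper's argument: test each variational inequality at the other solution, add, lower-bound with the strong monotonicity \eqref{eq:monotone}, then apply Young's inequality blockwise and discard the dual term. The paper packages your Young step as $W\le W/2+\tfrac12\bigl(\norm{\xi_x}^2/L+\norm{\xi_y}^2/\mu\bigr)$ with $W=L\norm{p_\xi-p}^2+\mu\norm{y_\xi-y_*}^2$, and then divides by $L$.
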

\begin{proof}
Test each variational inequality at the other solution and add.
With \(W=L\norm{p_\xi-p}^2+\mu\norm{y_\xi-y_*}^2\),
\[
W\le-\ip{\xi}{z_\xi-z_*}
\le\frac W2+
\frac12\left(\frac{\norm{\xi_x}^2}{L}+
\frac{\norm{\xi_y}^2}{\mu}\right).
\]
Divide by \(L\) and discard the dual term.
\end{proof}

\subsection{Independent inner averaging}
Given an anchor estimate \(v=G(c,y^0)+\xi\), initialize
\(z_0=(c,y^0)\). With fresh independent uniform indices \(I_j\), use
\begin{align}
V_j&=v+G_{I_j}(z_j)-G_{I_j}(c,y^0)
+(2L(x_j-c),\mu(y_j-y^0)),\label{eq:inner}\\
z_{j+1}&=\Pi_{\R^{d_x}\times Y}(z_j-\eta V_j),\qquad
c^+=K^{-1}\sum_{j=0}^{K-1}x_j,\nonumber\\
\eta&=\frac{\mu}{2^{16}L^2},\qquad K=\lceil2^{32}\kappa^2\rceil.
\nonumber
\end{align}
Each iteration uses at most two component queries.

\begin{lemma}[Inner error]\label{lem:inner}
Conditional on the entire anchor history,
\begin{align}
\E\norm{c^+-p_\xi}^2
&\le\frac3{2^{16}\kappa}
(\norm{p_\xi-c}^2+D_Y^2),\label{eq:inner-biased}\\
\E\norm{c^+-p}^2
&\le\frac1{1024}\norm{p-c}^2+
3\mathsf B(\xi)+\frac{\eps^2}{2^{32}L^2}.
\label{eq:inner-error}
\end{align}
\end{lemma}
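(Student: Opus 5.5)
The plan is to treat the inner loop as SVRG-type projected stochastic extragradient-free forward steps for the strongly monotone operator $T_c+\xi$, whose unique zero (in the variational sense over $\R^{d_x}\times Y$) is $z_\xi$. This gives \eqref{eq:inner-biased}. Then \eqref{eq:inner-error} follows from the triangle inequality, \cref{lem:bias}, and a comparison between $\norm{p_\xi-c}$ and $\norm{p-c}$.

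First, note that $V_j$ is an unbiased estimate of $T_c(z_j)+\xi$ conditional on the past. The anchor $v=G(c,y^0)+\xi$ is fixed given the anchor history, and the index $I_j$ is fresh. By \cref{as:lem:moments}(ii),
\[
\E\bigl[\norm{V_j-(T_c(z_j)+\xi)}^2\mid\mathcal H_j\bigr]\le L^2\norm{z_j-z_0}^2 .
\]
The monotonicity bound \eqref{eq:monotone} and nonexpansiveness of the projection then give the one-step inequality
\[
\E\norm{z_{j+1}-z_\xi}^2\le \norm{z_j-z_\xi}^2-2\eta\bigl(L\norm{x_j-p_\xi}^2+\mu\norm{y_j-y_\xi}^2\bigr)+2\eta^2\bigl(9L^2\norm{z_j-z_\xi}^2+L^2\norm{z_j-z_0}^2\bigr).
\]
Bound $\norm{z_j-z_0}^2\le2\norm{z_j-z_\xi}^2+2\norm{z_0-z_\xi}^2$. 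With $\eta=\mu/(2^{16}L^2)$, the quadratic terms cost at most $O(\eta^2L^2)\le\eta\mu/2^{15}$ times $\norm{z_j-z_\xi}^2$, which is absorbed by half of the strong-monotonicity term. What remains is a constant drift $4\eta^2L^2\norm{z_0-z_\xi}^2$ per step.

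Next, sum the inequality over $j<K$, keep only the primal term $\eta L\sum_j\E\norm{x_j-p_\xi}^2$, and apply Jensen to the average $c^+$:
\[
\E\norm{c^+-p_\xi}^2\le \frac{\norm{z_0-z_\xi}^2}{\eta LK}+\frac{4\eta L}{1}\cdot\norm{z_0-z_\xi}^2 .
\]
Now $\eta LK\ge 2^{16}\kappa$, and $4\eta L=4/(2^{16}\kappa)$. Moreover $\norm{z_0-z_\xi}^2=\norm{c-p_\xi}^2+\norm{y^0-y_\xi}^2\le\norm{p_\xi-c}^2+D_Y^2$. Together these give $(1+4+\text{slack})/(2^{16}\kappa)$. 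The constant has to be tuned to reach the stated $3$, for example by splitting the drift more carefully. This is the step I expect to be the main obstacle: the bookkeeping must keep the bias out of the variance, which works only because $\xi$ cancels inside $V_j-\E V_j$. It must also avoid conditioning on iterates that depend on fresh indices.

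For \eqref{eq:inner-error}, write
\[
\norm{c^+-p}^2\le(1+\alpha)\norm{c^+-p_\xi}^2+(1+\alpha^{-1})\norm{p_\xi-p}^2 .
\]
Use \cref{lem:bias} for the second term, with $\alpha=1/2$ giving the factor $3\mathsf B(\xi)$. For the first, use $\norm{p_\xi-c}^2\le2\norm{p-c}^2+2\mathsf B(\xi)$. Then $\tfrac32\cdot 3/(2^{16}\kappa)$ times $(2\norm{p-c}^2+2\mathsf B+D_Y^2)$ contributes at most $\norm{p-c}^2/1024$, a negligible extra multiple of $\mathsf B$ (absorbed after slightly adjusting $\alpha$), and
\[
\frac{9D_Y^2}{2^{17}\kappa}\le \frac{9D_Y^2\mu}{2^{17}L} .
\]
When $D_Y>0$ we have $\mu D_Y^2\le\eps^2/(2^{20}L)$, so this last term is at most $\eps^2/(2^{32}L^2)$ up to the constant check. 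When $D_Y=0$ the term vanishes.
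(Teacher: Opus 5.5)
Your argument follows the paper's own route: a projected stochastic step analyzed around the biased fixed point $z_\xi$, summation, Jensen for the primal average, then a squared triangle inequality with \cref{lem:bias}. As written, though, you get $5/(2^{16}\kappa)$ rather than the stated $3/(2^{16}\kappa)$ in \eqref{eq:inner-biased}, and you leave that constant open.

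\textbf{Where the constant is lost.} The loss is not in how the drift is split. You bound the squared step by $2\eta^2\bigl(\norm{T_c(z_j)-T_c(z_\xi)}^2+\E_j\norm{\zeta_j}^2\bigr)$, where $\zeta_j=V_j-T_c(z_j)-\xi$. The noise $\zeta_j$ is conditionally centered given the history before $I_j$ is drawn, so the cross term vanishes exactly and no factor $2$ is needed:
\[
\E_j\norm{z_{j+1}-z_\xi}^2\le\norm{z_j-z_\xi}^2-2\eta\bigl(L\norm{x_j-p_\xi}^2+\mu\norm{y_j-y_\xi}^2\bigr)+\eta^2\bigl(9L^2\norm{z_j-z_\xi}^2+L^2\norm{z_j-z_0}^2\bigr).
\]
Bound $\norm{z_j-z_0}^2$ by $2\norm{z_j-z_\xi}^2+2V_*$, with $V_*=\norm{p_\xi-c}^2+D_Y^2$. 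The quadratic term becomes $11\eta^2L^2\norm{z_j-z_\xi}^2$. Since $11\eta L^2\le\mu$, and hence $11\eta L\le 1$, it is absorbed into half of the monotonicity term. The drift is then $2\eta^2L^2V_*$, and
\[
\E\norm{c^+-p_\xi}^2\le\Bigl(\frac1{\eta LK}+2\eta L\Bigr)V_*\le\frac{3}{2^{16}\kappa}V_*.
\]
Alternatively, you can keep your looser variance bound but retain $(2-22\eta L)\eta L$ of the primal monotonicity term instead of only $\eta L$. That yields roughly $2.5/(2^{16}\kappa)$, also below $3$.

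\textbf{Second inequality.} In \eqref{eq:inner-error}, your choice $\alpha=1/2$ makes the $\mathsf B(\xi)$ coefficient $3+9/(2^{16}\kappa)$, which exceeds $3$. Take $\alpha=1$, as the paper does. This gives a $\mathsf B(\xi)$ coefficient of $2+12/(2^{16}\kappa)\le3$. The coefficient on $\norm{p-c}^2$ becomes $12/2^{16}\le1/1024$. The remainder is $6D_Y^2/(2^{16}\kappa)\le6\eps^2/(2^{36}L^2)\le\eps^2/(2^{32}L^2)$. With $\alpha=1$, even your constant $5$ would suffice for \eqref{eq:inner-error}, which is the bound the outer recursion actually uses. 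Only the explicit constant in \eqref{eq:inner-biased} needs the exact variance identity above.
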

\begin{proof}
Write \(V_j=T_c(z_j)+\xi+\zeta_j\). Conditional on \(z_j\),
\(\E_j\zeta_j=0\) and
\(\E_j\norm{\zeta_j}^2\le L^2\norm{z_j-(c,y^0)}^2\).
This is \cref{as:lem:moments} with one fresh uniform index, conditional
on the entire anchor history and the inner iterates before $I_j$ is
drawn. The possibly noncentered anchor bias $\xi$ is held fixed; no
conditioning on a future sampled iterate is used.
Subtract the projection fixed-point identity
\[
z_\xi=\Pi_{\R^{d_x}\times Y}
\bigl(z_\xi-\eta(T_c(z_\xi)+\xi)\bigr).
\]
In particular, no vanishing constrained gradient is assumed.
For \(e_j=z_j-z_\xi\) and
\(V_*=\norm{p_\xi-c}^2+D_Y^2\), nonexpansiveness gives
\begin{align*}
\E_j\norm{e_{j+1}}^2
&\le\norm{e_j}^2
-2\eta(L\norm{x_j-p_\xi}^2+\mu\norm{y_j-y_\xi}^2)
+11\eta^2L^2\norm{e_j}^2+2\eta^2L^2V_*\\
&\le\norm{e_j}^2-\eta L\norm{x_j-p_\xi}^2
-\eta\mu\norm{y_j-y_\xi}^2+2\eta^2L^2V_*.
\end{align*}
The first inequality uses \(3L\)-Lipschitz continuity and
\(\norm{z_j-(c,y^0)}^2\le2\norm{e_j}^2+2V_*\); the second uses
\(11\eta L^2\le\mu\). Sum over \(j\), use \(\norm{e_0}^2\le V_*\),
and apply Jensen:
\[
\E\norm{c^+-p_\xi}^2\le
\left(\frac1{\eta LK}+2\eta L\right)V_*
\le\frac3{2^{16}\kappa}V_*.
\]
Apply the squared triangle inequality twice and \cref{lem:bias}.
The coefficient of \(\norm{p-c}^2\) is at most \(12/2^{16}\le1/1024\),
and the coefficient of \(\mathsf B(\xi)\) is at most \(3\).
The remaining term is
\[
\frac6{2^{16}\kappa}D_Y^2
\le\frac{6\eps^2}{2^{36}L^2}\le\frac{\eps^2}{2^{32}L^2}.
\]
No bound on component affine gradients is used: they cancel in the
differences.
\end{proof}

\subsection{Outer recursion}
Choose
\[
q=\max\{1,\lfloor\sqrt{n/\gamma}\rfloor\},\quad
b=\lceil2^{12}\gamma q\rceil,\quad
S=\lceil2^{12}(1+A)\rceil.
\]
At the beginning of every epoch of at most \(q\) centers, compute
\(v_t=G(c_t,y^0)\) by one full pass. At the other centers, draw a fresh
batch of \(b\) indices with replacement and set
\[
v_t=v_{t-1}+\frac1b\sum_{i\in I_t}
\bigl[G_i(c_t,y^0)-G_i(c_{t-1},y^0)\bigr].
\]
If \(b>n\), replace all difference batches in the epoch by exact
full-gradient differences, costing \(2n\le2b\).
Run the independent inner loop to obtain \(c_{t+1}\).
Return \(c_J\), where \(J\) is independent uniform on \(\{0,\ldots,S-1\}\).

Let \(\xi_t=v_t-G(c_t,y^0)\) and
\(\norm{\xi}_\kappa^2=\norm{\xi_x}^2+\kappa\norm{\xi_y}^2\).
Fresh estimator increments are conditionally centered, with conditional
weighted variance at most
\(\gamma L^2\norm{c_t-c_{t-1}}^2/b\).
Cross terms with the preceding estimator error have expectation zero.
Summing the martingale increments within each epoch yields
\begin{equation}\label{eq:outer-variance}
\sum_t\E\mathsf B(\xi_t)\le
\frac{\gamma q}{b}\sum_t\E\norm{c_{t+1}-c_t}^2.
\end{equation}
At refreshes the error is zero. Exact differences introduce no noise.
The full current error is not asserted to be conditionally centered
given the current center.

\subsection{Descent and query accounting}
Let \(p_t=\argmin_x\{\Psi(x)+L\norm{x-c_t}^2\}\) and set
\[
\mathsf D=\sum_t\E\norm{p_t-c_t}^2,\qquad
\mathsf E=\sum_t\E\norm{c_{t+1}-p_t}^2.
\]
The total squared movement is at most \(2\mathsf D+2\mathsf E\).
By \cref{eq:inner-error,eq:outer-variance},
\[
\mathsf E\le\frac{\mathsf D}{1024}
+\frac{3\gamma q}{b}(2\mathsf D+2\mathsf E)
+\frac{S\eps^2}{2^{32}L^2}
\le\frac{\mathsf D}{128}+\frac{S\eps^2}{2^{31}L^2}.
\]
The last inequality follows by moving the \(\mathsf E\) term left and
using \(b\ge2^{12}\gamma q\).
Pointwise,
\[
\mathcal E(c_{t+1})\le\mathcal E(c_t)
-L\norm{p_t-c_t}^2+L\norm{c_{t+1}-p_t}^2.
\]
Since \(\mathcal E(c_0)-\inf\Psi\le\Delta+r\), summing gives
\[
\frac{\mathsf D}{S}\le\frac{128}{127}
\left(\frac{\Delta+r}{LS}+\frac{\eps^2}{2^{31}L^2}\right).
\]
Therefore \(\E\norm{\nabla\mathcal E(c_J)}^2=4L^2\mathsf D/S
\le\eps^2/512\). By \cref{eq:mu-transfer},
\[
\E\norm{\nabla\Phi_{1/(2L)}(c_J)}^2
\le\frac{\eps^2}{256}+\frac{\eps^2}{2^{18}}<\frac{\eps^2}{200}.
\]
Markov gives success at least \(199/200\).

The query count is at most \(n(1+S/q)+2bS+2KS\).
For \(q\ge2\), \(n/q,b=O(\sqrt{n\gamma})\).
For \(q=1\), every anchor is exact and no difference batches are used;
then \(n=O(\sqrt{n\gamma})\). Consequently
\begin{equation}\label{eq:weighted-upper}
T\le C\left[n+(1+A)(\sqrt{n\gamma}+\kappa^2)\right].
\end{equation}
Taking \(\gamma=\kappa\) or \(\gamma=1\) proves
\cref{eq:general-upper,eq:common-upper}. All constants and loop lengths
are universal and prescribed; no logarithm is hidden.

\section{Accelerated Proximal Dual Steps under Averaged Smoothness}\label{hd:sec:upper}
Throughout this section we use only AS, average dual concavity, and
the bounded-domain and initial-gap assumptions of \cref{sec:model-main}.
Write $\log_+(u)=\max\{0,\log u\}$ for $u>0$. This section does not treat a general strongly monotone operator as a gradient. A simple projected finite-sum solver is used only on strongly convex--strongly concave saddle subproblems. Acceleration is applied to a genuine convex dual objective, and a primal averaging certificate is proved explicitly. In particular, a small dual objective gap alone is \emph{not} substituted for a small primal proximal gap.

\subsection{An elementary projected SVRG solver for a strongly monotone sum}
Let $\mathcal K$ be a nonempty compact convex product domain of diameter at most $d$. Let $T=n^{-1}\sum_iT_i$ be $\mu$-strongly monotone on $\mathcal K$ and suppose only
\begin{equation}\label{as:eq:svrg-ms}
 \frac1n\sum_i\norm{T_i(z)-T_i(z')}^2\le\ell^2\norm{z-z'}^2
 \qquad(z,z'\in\mathcal K).
\end{equation}
Let $z_*$ solve $0\in T(z_*)+N_{\mathcal K}(z_*)$. Normal cones use the outward convention. Individual $T_i$ need not be monotone or $\ell$-Lipschitz. Jensen's inequality gives the required $\ell$-Lipschitz continuity of the \emph{average} $T$.
At a snapshot $a\in\mathcal K$, query and cache all $T_i(a)$. Starting at $z_0=a$, use fresh independent uniform indices and
\begin{equation}\label{hd:eq:svrg-operator}
 v_t=T_{I_t}(z_t)-T_{I_t}(a)+T(a),\qquad
 z_{t+1}=\Pi_{\mathcal K}(z_t-hv_t),\qquad h=\frac\mu{12\ell^2}.
\end{equation}
One epoch has $m=\lceil12\ell^2\log(6)/\mu^2\rceil$ steps and costs at most $n+m$ calls.

\begin{lemma}[Contraction and a computable normal-cone residual]\label{hd:lem:svrg-residual}
One epoch satisfies
\[
 \E[\norm{z_m-z_*}^2\mid a]\le\frac13\norm{a-z_*}^2.
\]
After $J$ epochs, two additional full passes produce $z^+\in\mathcal K$ and a computable vector $r\in T(z^+)+N_{\mathcal K}(z^+)$ such that
\[
 \PP(\norm r>\zeta)\le \frac{36\ell^2d^2}{\zeta^2}\,3^{-J}.
\]
Thus a fixed budget $O((n+\ell^2/\mu^2)[1+\log_+(\ell d/(\zeta\sqrt p))])$ makes the residual at most $\zeta$ with probability at least $1-p$, where $\log_+u=\max\{0,\log u\}$ and a positive diameter upper bound $d$ is used. The returned residual can be checked exactly; no unknown optimum is used in the check.
\end{lemma}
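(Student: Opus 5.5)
The plan is a standard SVRG contraction for the projected operator iteration, followed by a post-processing step that turns the last iterate into a checkable residual.

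\textbf{Per-step recursion.} Fix the epoch's snapshot $a$ and write $e_t=z_t-z_*$. The fixed-point identity $z_*=\Pi_{\mathcal K}(z_*-hT(z_*))$ holds because $0\in T(z_*)+N_{\mathcal K}(z_*)$. By nonexpansiveness of the projection,
\[
\norm{e_{t+1}}^2\le\norm{e_t-h(v_t-T(z_*))}^2 .
\]
Conditional on the history before $I_t$ is drawn, $\E_t v_t=T(z_t)$. Expanding the square, the cross term gives $-2h\ip{T(z_t)-T(z_*)}{e_t}\le-2h\mu\norm{e_t}^2$ by strong monotonicity. For the second moment, write $v_t-T(z_*)=[T_{I_t}(z_t)-T_{I_t}(z_*)]-[T_{I_t}(a)-T_{I_t}(z_*)-(T(a)-T(z_*))]$. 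Using $\norm{u+w}^2\le2\norm u^2+2\norm w^2$, the fact that a variance is at most a second moment, and \eqref{as:eq:svrg-ms}, this gives
\[
\E_t\norm{v_t-T(z_*)}^2\le2\ell^2\norm{e_t}^2+2\ell^2\norm{a-z_*}^2 .
\]
With $h=\mu/(12\ell^2)$ we have $2h^2\ell^2\le h\mu/6$, so
\[
\E_t\norm{e_{t+1}}^2\le(1-\tfrac{5}{3}h\mu)\norm{e_t}^2+2h^2\ell^2\norm{a-z_*}^2 .
\]
Unrolling over $m$ steps gives
\[
\E\norm{e_m}^2\le\Bigl[e^{-5h\mu m/3}+\tfrac{6h\ell^2}{5\mu}\Bigr]\norm{a-z_*}^2 .
\]
Since $h\mu m\ge\log 6$, the first term is at most $6^{-5/3}$, and the second equals $1/10$. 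Their sum is below $1/3$, proving the epoch contraction. If the constants turn out tighter than this, I would instead bound the noise term uniformly by $4\ell^2 d^2$-type quantities or adjust the unrolling; this is the only delicate arithmetic. Iterating over $J$ epochs with each new snapshot $a=z_m$ gives $\E\norm{z^{(J)}-z_*}^2\le3^{-J}d^2$.

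\textbf{Computable residual.} This is the main point. Let $w=z^{(J)}$. One full pass computes $T(w)$. Set $z^+=\Pi_{\mathcal K}(w-hT(w))$; a second full pass computes $T(z^+)$. Define
\[
r=T(z^+)+\frac{w-hT(w)-z^+}{h}-T(w)+T(w).
\]
More precisely, the projection optimality gives $(w-hT(w)-z^+)/h\in N_{\mathcal K}(z^+)$, so
\[
r:=T(z^+)+\frac{w-z^+}{h}-T(w)\in T(z^+)+N_{\mathcal K}(z^+),
\]
and $r$ is computed exactly from the two passes. To bound it, note that $\norm{T(z^+)-T(w)}\le\ell\norm{z^+-w}$ by Jensen's inequality. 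Moreover, the forward–backward map $z\mapsto\Pi(z-hT(z))$ fixes $z_*$ and is Lipschitz with constant $1+h\ell\le2$, so $\norm{z^+-w}\le\norm{z^+-z_*}+\norm{w-z_*}\le3\norm{w-z_*}$. Hence
\[
\norm r\le(1/h+\ell)\,3\norm{w-z_*}\le\frac{C\ell^2}{\mu}\norm{w-z_*}.
\]
To obtain the stated $\ell d$ form rather than $\ell^2/\mu$, I will absorb the factor $\ell/\mu$ into the epoch count, or equivalently use the contraction to assert the bound with the constant $36$ after adjusting $J$. I expect the exact bookkeeping that matches the displayed constant $36\ell^2d^2/\zeta^2$ to be the fiddly step. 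The fallback is to state the probability bound with $C(\ell/\mu)^2$ in place of $36$, which only adds the logarithmic term $\log(\ell/\mu)$. Markov's inequality applied to $\norm{w-z_*}^2$ then gives the tail bound.

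\textbf{Budget.} Each epoch costs $n+m=O(n+\ell^2/\mu^2)$ calls, and the residual adds two full passes. Choosing $J=\lceil\log_3(36\ell^2d^2/(\zeta^2p))\rceil$ yields success probability at least $1-p$, which gives the stated budget. The check that $\norm r\le\zeta$ uses only $r$ itself and never the unknown $z_*$.
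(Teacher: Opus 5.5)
Your epoch contraction is correct. You center the variance at $z_*$ to get $\E_t\norm{v_t-T(z_*)}^2\le2\ell^2\norm{e_t}^2+2\ell^2\norm{a-z_*}^2$. The paper instead splits off $T(z_t)-T(z_*)$ and gets $3\ell^2r_t^2+2\ell^2r_a^2$. Both yield the factor $1/3$, so you do not need any of the hedging about constants there.

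The gap is in the residual step. You take the extra projection with the small SVRG step $h=\mu/(12\ell^2)$, so the normal-cone part $(w-z^+)/h$ carries a factor $12\ell^2/\mu$. You therefore only prove $\norm r\le3(\ell+12\ell^2/\mu)\norm{w-z_*}$, and the tail bound exceeds the displayed $36\ell^2d^2 3^{-J}/\zeta^2$ by roughly $(\ell/\mu)^2$. Neither of your remedies proves the statement. ``Adjusting $J$'' just changes the inequality being claimed. The fallback with $C(\ell/\mu)^2$ adds a $\log(\ell/\mu)$ term that the stated budget $O((n+\ell^2/\mu^2)[1+\log_+(\ell d/(\zeta\sqrt p))])$ does not absorb. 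For instance, if $\ell d\le\zeta\sqrt p$ but $\ell/\mu$ is huge, the lemma promises $O(n+\ell^2/\mu^2)$ calls with a constant number of epochs. Your version would need about $2\log_3(\ell/\mu)$ epochs.

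The missing observation is that the post-processing step size is free, and it should be $1/\ell$ rather than $h$; this is what the paper does. Set $z^+=\Pi_{\mathcal K}(w-T(w)/\ell)$. Projection optimality gives $n^+=\ell(w-z^+)-T(w)\in N_{\mathcal K}(z^+)$, so
\[
r=T(z^+)+n^+=T(z^+)-T(w)+\ell(w-z^+)\in T(z^+)+N_{\mathcal K}(z^+),
\qquad \norm r\le2\ell\norm{w-z^+}.
\]
The map $z\mapsto\Pi_{\mathcal K}(z-T(z)/\ell)$ fixes $z_*$ and is $2$-Lipschitz, because the average $T$ is $\ell$-Lipschitz by Jensen. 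Hence $\norm{w-z^+}\le3\norm{w-z_*}$ and $\norm r\le6\ell\norm{w-z_*}$. Markov's inequality with $\E\norm{w-z_*}^2\le3^{-J}d^2$ then gives exactly $\PP(\norm r>\zeta)\le36\ell^2d^23^{-J}/\zeta^2$. Your choice $J=\lceil\log_3(36\ell^2d^2/(\zeta^2p))\rceil$ then delivers the stated budget.
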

\begin{proof}
The variational inequality is equivalent to $z_*=\Pi_{\mathcal K}(z_*-hT(z_*))$. Conditional centering of $v_t$ and nonexpansiveness of projection give, with $r_t=\norm{z_t-z_*}$ and $r_a=\norm{a-z_*}$,
\[
 \E_t r_{t+1}^2
 \le(1-2\mu h+3\ell^2h^2)r_t^2+2\ell^2h^2r_a^2
 \le(1-\mu h)r_t^2+2\ell^2h^2r_a^2.
\]
Indeed, conditional on the history immediately before drawing $I_t$, both $z_t$ and $a$ are fixed. The exact identity
\[
 \mathbb E_t\norm{v_t-T(z_t)}^2
 =\frac1n\sum_i\norm{T_i(z_t)-T_i(a)}^2-\norm{T(z_t)-T(a)}^2
 \le\ell^2\norm{z_t-a}^2\le2\ell^2(r_t^2+r_a^2)
\]
follows from \eqref{as:eq:svrg-ms}. Jensen's inequality also gives
$\norm{T(z_t)-T(z_*)}^2\le\ell^2r_t^2$.
These are the only smoothness estimates in the one-step recurrence;
there is no pathwise component-contraction assertion.
Iteration bounds the contraction factor by
$e^{-m\mu h}+2\ell^2h/\mu\le1/6+1/6=1/3$.

Let $z$ be the last iterate, evaluate $T(z)$, set $z^+=\Pi_{\mathcal K}(z-T(z)/\ell)$, and evaluate $T(z^+)$. Projection optimality gives
\[
 n^+=\ell(z-z^+)-T(z)\in N_{\mathcal K}(z^+),\qquad
 r=T(z^+)+n^+.
\]
The map $z\mapsto\Pi_{\mathcal K}(z-T(z)/\ell)$ is $2$-Lipschitz and fixes $z_*$. Hence $\norm{z-z^+}\le3\norm{z-z_*}$ and $\norm r\le2\ell\norm{z-z^+}\le6\ell\norm{z-z_*}$. Markov's inequality and the iterated epoch bound prove the result.
\end{proof}

\subsection{Accelerated proximal dual steps with an actual primal certificate}
Let $F:X\times Y\to\R$ be continuous, differentiable and convex--concave, where $X,Y$ are compact convex. Write $D_X=\operatorname{diam}(X)$, $D_Y=\operatorname{diam}(Y)$ and $d=D_X+D_Y>0$. Define the genuine convex dual objective
\[
 H(y)=-\min_{x\in X}F(x,y),\qquad y\in Y.
\]
Fix a proximal parameter $\tau>0$, a horizon $K\ge1$ and a residual tolerance $\zeta>0$. Initialize $y_0\in Y$, $z_0=y_0$ and $\mathsf A_0=0$. For $k=1,\ldots,K$, choose the positive $a_k$ satisfying
\begin{equation}\label{hd:eq:accelerated-weights}
 \tau a_k^2=\mathsf A_{k-1}+a_k=\mathsf A_k,
 \qquad v_k=\frac{\mathsf A_{k-1}y_{k-1}+a_kz_{k-1}}{\mathsf A_k}.
\end{equation}
Approximately solve the saddle subproblem
\begin{equation}\label{hd:eq:dual-prox-saddle}
 \min_{x\in X}\max_{y\in Y}
       \{F(x,y)-\tfrac\tau2\norm{y-v_k}^2\}.
\end{equation}
Specifically obtain $(x_k,y_k)$ and normal vectors $n_{x,k}\in N_X(x_k)$, $n_{y,k}\in N_Y(y_k)$ whose residual satisfies
\begin{equation}\label{hd:eq:dual-residual}
 \left\Vert
 \begin{pmatrix}
 \nabla_xF(x_k,y_k)+n_{x,k}\\
 -\nabla_yF(x_k,y_k)+\tau(y_k-v_k)+n_{y,k}
 \end{pmatrix}\right\Vert\le\zeta.
\end{equation}
Define $g_k=\tau(v_k-y_k)$, $z_k=z_{k-1}-a_kg_k$ and
$\bar x=\mathsf A_K^{-1}\sum_{k=1}^K a_kx_k$.
There is no projection of $z_k$ and no requirement that $v_k\in Y$.

\begin{lemma}[Accelerated primal--dual gap bound]\label{hd:lem:accelerated-primal}
If every residual check \eqref{hd:eq:dual-residual} holds, then
\begin{equation}\label{hd:eq:accelerated-gap}
 \max_{y\in Y}F(\bar x,y)-\min_{x\in X}F(x,y_K)
 \le\frac{D_Y^2}{2\mathsf A_K}+Kd\zeta
 \le\frac{2\tau D_Y^2}{(K+1)^2}+Kd\zeta.
\end{equation}
The left side of \eqref{hd:eq:accelerated-gap} is an actual primal--dual gap.  Since
$\min_xF(x,y_K)\le\min_x\max_yF(x,y)$, it implies
\[
 \max_yF(\bar x,y)
 \le \min_x\max_yF(x,y)+\frac{D_Y^2}{2\mathsf A_K}+Kd\zeta.
\]
Thus the accelerated procedure returns the primal objective certificate required by the outer proximal loop; a dual objective gap alone is never substituted for it.

In particular, for any $\delta>0$, the choices
\begin{equation}\label{hd:eq:dual-horizon}
 K=\max\{1,\lceil2\sqrt{\tau D_Y^2/\delta}\rceil\},\qquad
 \zeta=\frac{\delta}{2Kd}
\end{equation}
guarantee a primal saddle objective error at most $\delta$.
\end{lemma}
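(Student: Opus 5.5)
The plan is to run the standard estimate-sequence analysis of accelerated proximal point (Güler/Nesterov style) on the convex dual objective $H$. The primal information is tracked through the linearizations that the inexact subproblem solutions provide. For each $k$, I will read the residual condition \eqref{hd:eq:dual-residual} as two approximate optimality statements.

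\emph{Primal.} Set $e_{x,k}=\nabla_xF(x_k,y_k)+n_{x,k}$. Convexity of $F(\cdot,y_k)$ together with $n_{x,k}\in N_X(x_k)$ gives
\[
F(x,y_k)\ge F(x_k,y_k)-\|e_{x,k}\|D_X
\quad\text{for all } x\in X.
\]
Hence $H(y_k)\le -F(x_k,y_k)+\zeta D_X$.

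\emph{Dual.} Set $e_{y,k}=-\nabla_yF(x_k,y_k)+\tau(y_k-v_k)+n_{y,k}$. Concavity of $F(x_k,\cdot)$ gives, for every $y\in Y$,
\[
F(x_k,y)\le F(x_k,y_k)+\langle\nabla_yF(x_k,y_k),y-y_k\rangle .
\]
The residual equation then lets me replace $\nabla_yF(x_k,y_k)$ by $\tau(y_k-v_k)+n_{y,k}-e_{y,k}$, which is $-g_k+n_{y,k}-e_{y,k}$. The normal-cone term has the right sign, since $\langle n_{y,k},y-y_k\rangle\le0$. This yields the key one-sided inequality
\[
F(x_k,y)\le F(x_k,y_k)-\langle g_k,y-y_k\rangle+\zeta D_Y .
\]
Combining the two statements, the affine function $\ell_k(y)=-F(x_k,y_k)+\langle g_k,y-y_k\rangle$ satisfies $\ell_k(y)\le -F(x_k,y)+\zeta D_Y$ on $Y$. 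Moreover $-F(x_k,y_k)\ge H(y_k)-\zeta D_X$. So $\ell_k$ is an approximate lower model of $-F(x_k,\cdot)$ rather than merely of $H$, and this is what produces a primal certificate.

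Next I will define the estimate function
\[
\psi_k(y)=\tfrac12\|y-y_0\|^2+\sum_{j\le k}a_j\ell_j(y).
\]
Because the $\ell_j$ are affine, $\psi_k$ is minimized over the whole space at $z_k=y_0-\sum_j a_jg_j$, which matches the unprojected update $z_k=z_{k-1}-a_kg_k$. By induction I will prove
\[
\mathsf A_k\bigl(-F(x_k,y_k)\bigr)\le \min_{y}\psi_k(y)+(\text{error}),
\]
with an error of at most $\zeta d$ per step, giving $\mathsf A_k\cdot k\,d\zeta$ in total.

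The inductive step is the usual one. I will use $\ell_k(y_{k-1})\le -F(x_{k-1},y_{k-1})+\dots$ (or the corresponding bound via $H$), expand $\psi_k(y)\ge\psi_{k-1}(z_{k-1})+\tfrac12\|y-z_{k-1}\|^2+a_k\ell_k(y)$, and combine with the three-point identity for $v_k$. I will also need the prox-descent inequality
\[
\ell_k(y)\ge -F(x_k,y_k)+\langle g_k,y-v_k\rangle+\tfrac1\tau\|g_k\|^2 ,
\]
which holds because $y_k-v_k=-g_k/\tau$. The weight relation $\tau a_k^2=\mathsf A_k$ then closes the quadratic term exactly.

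This inductive bookkeeping is the main obstacle. The difficulty is that the comparison point $y_{k-1}$ has to be lower-bounded in terms of $-F(x_{k-1},y_{k-1})$ rather than $H$. I will handle it with the chain $\ell_k(y_{k-1})\le -F(x_k,y_{k-1})+\zeta D_Y\le H(y_{k-1})+\zeta D_Y\le -F(x_{k-1},y_{k-1})+\zeta d$. This chain costs $\zeta d$ per step, which is the source of the $Kd\zeta$ term.

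Finally, for any $y\in Y$, $\min\psi_K\le\psi_K(y)\le\tfrac12 D_Y^2+\sum_k a_k\bigl(-F(x_k,y)+\zeta D_Y\bigr)$. Convexity of $F(\cdot,y)$ gives $\sum_k a_kF(x_k,y)\ge\mathsf A_KF(\bar x,y)$. Together with $-F(x_K,y_K)\ge H(y_K)-\zeta D_X=-\min_xF(x,y_K)-\zeta D_X$, dividing by $\mathsf A_K$ and maximizing over $y$ yields the first inequality of \eqref{hd:eq:accelerated-gap}.

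The weight recursion gives $\sqrt{\mathsf A_k}\ge\sqrt{\mathsf A_{k-1}}+1/(2\sqrt\tau)$, hence $\mathsf A_K\ge(K+1)^2/(4\tau)$ up to the initial step, which yields the second inequality. The consequences follow immediately: weak duality gives the primal objective certificate, and substituting the choices \eqref{hd:eq:dual-horizon} gives $2\tau D_Y^2/(K+1)^2\le\delta/2$ and $Kd\zeta=\delta/2$.
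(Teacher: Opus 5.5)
Your proposal is correct and follows essentially the paper's argument. The residual checks give an affine model $\ell_k$ that lower-bounds $-F(x_k,\cdot)$ on $Y$, not just $H$. The unprojected $z_k$-sequence with $\tau a_k^2=\mathsf A_k$ closes the accelerated recursion, and convexity in $x$ turns $\sum_k a_kF(x_k,y)$ into $\mathsf A_KF(\bar x,y)$. The only difference is bookkeeping: the paper anchors the model at $H(y_k)$ with the whole error $d\zeta$ inside it and telescopes $\mathsf A_kH(y_k)+\tfrac12\|z_k-y\|^2$ for a fixed comparator $y$, whereas your estimate-sequence version anchors at $-F(x_k,y_k)$ and pays $\zeta d$ at the comparison point $y_{k-1}$; both give exactly $Kd\zeta$ because $\sum_{k=1}^{K}\mathsf A_{k-1}\le(K-1)\mathsf A_K$.
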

\begin{proof}
Let $e=d\zeta$. Convexity in $x$ and the first residual block give
$H(y_k)\le-F(x_k,y_k)+D_X\zeta$.
For every $y\in Y$, concavity in $y$ and the second residual block, with the normal-cone sign, give
\[
 -F(x_k,y)\ge-F(x_k,y_k)+\ip{g_k}{y-y_k}-D_Y\zeta.
\]
Combining,
\begin{equation}\label{hd:eq:affine-lower-model}
 \ell_k(y):=H(y_k)+\ip{g_k}{y-y_k}-e
 \le-F(x_k,y)\le H(y)\qquad(y\in Y).
\end{equation}
Thus the affine model is valid for the actual saddle point used for primal averaging, not only for the dual value function.

The identity \eqref{hd:eq:accelerated-weights} and $g_k=\tau(v_k-y_k)$ imply
\[
 \mathsf A_{k-1}(y_k-y_{k-1})
 =a_k(z_{k-1}-y_k)-a_k^2g_k.
\]
Using \eqref{hd:eq:affine-lower-model} at $y_{k-1}$, and expanding the squared distance in $z_k=z_{k-1}-a_kg_k$, yields for every fixed $y\in Y$
\begin{align*}
 &\mathsf A_kH(y_k)-\mathsf A_{k-1}H(y_{k-1})
   +\tfrac12\norm{z_k-y}^2-\tfrac12\norm{z_{k-1}-y}^2\\
 &\quad\le a_k[H(y_k)+\ip{g_k}{y-y_k}]
           +\mathsf A_{k-1}e-\tfrac12a_k^2\norm{g_k}^2\\
 &\quad\le-a_kF(x_k,y)+\mathsf A_ke.
\end{align*}
Sum and discard the terminal squared distance. Since $z_0\in Y$,
\[
 \mathsf A_KH(y_K)+\sum_{k=1}^Ka_kF(x_k,y)
 \le D_Y^2/2+\sum_{k=1}^K\mathsf A_ke
 \le D_Y^2/2+K\mathsf A_Ke.
\]
Convexity of $F(\cdot,y)$ gives the same inequality with the weighted sum replaced by $\mathsf A_KF(\bar x,y)$. Since $\min_xF(x,y_K)=-H(y_K)$, taking the maximum over $y$ proves the first inequality.
Finally, if $b_k=\tau\mathsf A_k$, then $\sqrt{b_k}=(1+\sqrt{1+4b_{k-1}})/2\ge\sqrt{b_{k-1}}+1/2$, and $b_1=1$. Thus $\mathsf A_K\ge(K+1)^2/(4\tau)$. The parameter choices follow.
\end{proof}

\subsection{A uniform compact primal region for proximal subproblems}
For a current primal center $c$ set
\[
 F_c(x,y)=f(x,y)+L\norm{x-c}^2.
\]
It is $L$-strongly convex in $x$ and concave in $y$ because the average $f$ is jointly $L$-smooth under AS. The components of $F_c$ satisfy mean-squared averaged smoothness with constant at most $3L$, by \cref{as:lem:moments}. They need not individually be $3L$-smooth. We now justify a finite primal ball without imposing an additional restriction on the original problem.

\begin{lemma}[Gradient bound from the original gap and dual diameter]\label{hd:lem:center-gradient}
If $\Phi(c)-\inf\Phi\le\Delta_c$, then for every $y\in Y$,
\begin{equation}\label{hd:eq:center-gradient}
 \norm{\nabla_x f(c,y)}\le3\sqrt{L\Delta_c}+LD_Y.
\end{equation}
Consequently every minimizer of $F_c(\cdot,y)$ lies within distance
$3\sqrt{\Delta_c/L}+D_Y$ of $c$. Restricting the minimization of $\max_yF_c(x,y)$ to a centered ball of any strictly larger radius does not change its optimum.
\end{lemma}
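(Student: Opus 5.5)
The plan is to bound $\norm{\nabla_x f(c,y)}$ first at a maximizer $y_c\in\argmax_Y f(c,\cdot)$, and then transfer the bound to every $y\in Y$ by joint $L$-smoothness (\cref{as:lem:moments}(i)). For the transfer, $\norm{\nabla_xf(c,y)-\nabla_xf(c,y_c)}\le L\norm{y-y_c}\le LD_Y$, which accounts for the additive $LD_Y$ term.

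The core step is a gradient bound at $(c,y_c)$ from the gap. Set $g=\nabla_xf(c,y_c)$. By $L$-smoothness of $f(\cdot,y_c)$, for $x=c-tg$,
\[
\Phi(x)\ge f(x,y_c)\ge f(c,y_c)-t\norm g^2+\tfrac{L}{2}t^2\norm g^2 .
\]
This inequality points the wrong way for an upper bound on $\Phi(x)$, so a sharper route is needed.

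I would instead use weak convexity. Since $\Phi+\tfrac L2\norm{\cdot}^2$ is convex and $g\in\partial\Phi(c)$ (a Danskin-type subgradient at the maximizer, valid because $f(\cdot,y_c)+\tfrac L2\norm\cdot^2$ is a convex minorant of $\Phi+\tfrac L2\norm\cdot^2$ touching at $c$), we get $\Phi(x)\ge\Phi(c)+\ip g{x-c}-\tfrac L2\norm{x-c}^2$. A lower bound on $\Phi$ still does not control $g$, however. The gap must be combined with an upper bound on $\Phi$ near $c$, and such a bound requires control of all $y$, not only $y_c$. Choose $x=c-tg$. For every $y$,
\[
f(x,y)\le f(c,y)-t\ip{\nabla_xf(c,y)}{g}+\tfrac L2t^2\norm g^2,
\]
and
\[
\ip{\nabla_xf(c,y)}{g}\ge\norm g^2-LD_Y\norm g .
\]
Hence
\[
\Phi(x)\le\Phi(c)-t\norm g^2+tLD_Y\norm g+\tfrac L2t^2\norm g^2 .
\]
Taking $t=1/L$ gives
\[
\Delta_c\ge\Phi(c)-\Phi(x)\ge\frac{\norm g^2}{2L}-D_Y\norm g ,
\]
so $\norm g\le LD_Y+\sqrt{L^2D_Y^2+2L\Delta_c}$. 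This produces $2LD_Y$ rather than $LD_Y$. To recover the stated constant I would split into cases. If $LD_Y\le\sqrt{L\Delta_c}$, the bound is $O(\sqrt{L\Delta_c})$, and the constant $3$ absorbs the $D_Y$ contributions. If $LD_Y>\sqrt{L\Delta_c}$, I would redo the argument directly for arbitrary $y$, with $g_y=\nabla_xf(c,y)$, and use the maximizing dual point only through $\Phi(c)\ge f(c,y)$. I expect this bookkeeping to be the main obstacle, and it may require accepting constants like $3\sqrt{L\Delta_c}+LD_Y$ only after optimizing $t$ and using $\sqrt{a+b}\le\sqrt a+\sqrt b$.

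The remaining claims are routine. A minimizer $x^\star$ of $F_c(\cdot,y)$ satisfies $\nabla_xf(x^\star,y)+2L(x^\star-c)=0$. Strong monotonicity of $\nabla_xF_c(\cdot,y)$ with modulus $L$ gives
\[
L\norm{x^\star-c}\le\norm{\nabla_xF_c(c,y)}=\norm{\nabla_xf(c,y)},
\]
so $\norm{x^\star-c}\le3\sqrt{\Delta_c/L}+D_Y$. Finally, let $\bar x$ minimize $\max_yF_c(\cdot,y)$, and let $(\bar x,\bar y)$ be a saddle point of the strongly-convex–concave compact-dual problem, which exists by Sion. Then $\bar x$ minimizes $F_c(\cdot,\bar y)$ and therefore lies in the ball. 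Because the ball has strictly larger radius, it contains $\bar x$ in its interior, and restriction does not change the optimum: minimizing over a superset-containing set that includes the global minimizer yields the same value and minimizer.
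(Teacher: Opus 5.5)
Your one-step descent certificate is a sound idea, but as written it does not establish the stated constant, and your case split does not close the gap. With $t=1/L$ you get $u:=\norm{\nabla_xf(c,y_c)}\le LD_Y+\sqrt{L^2D_Y^2+2L\Delta_c}$. After your transfer to arbitrary $y$, this stays below $3\sqrt{L\Delta_c}+LD_Y$ exactly when $LD_Y\le\tfrac76\sqrt{L\Delta_c}$, so your first case is fine.

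In the second case, ``redoing the argument for arbitrary $y$'' with the same step gives the same quadratic inequality $u^2-2LD_Yu\le2L\Delta_c$. Hence $u\le LD_Y+\sqrt{L^2D_Y^2+2L\Delta_c}$, which exceeds $3\sqrt{L\Delta_c}+LD_Y$ whenever $LD_Y>\sqrt{7L\Delta_c}$. The coefficient of $LD_Y$ tends to $2$, and $\sqrt{a+b}\le\sqrt a+\sqrt b$ cannot lower it. So \eqref{hd:eq:center-gradient} is left unproved when $LD_Y/\sqrt{L\Delta_c}$ is large, and your distance and restriction claims rest on it.

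The missing step is to optimize the step length and to apply the argument directly to each $y_0\in Y$. Your inequalities never use that $y_c$ is a maximizer. For $g=\nabla_xf(c,y_0)$ with $u=\norm g>LD_Y$ (the case $u\le LD_Y$ is trivial), they give, for every $t\ge0$,
\[
 \Delta_c\ge\Phi(c)-\Phi(c-tg)\ge tu(u-LD_Y)-\tfrac L2t^2u^2 .
\]
The choice $t=(u-LD_Y)/(Lu)$ yields $(u-LD_Y)^2\le2L\Delta_c$, i.e.\ $\norm{\nabla_xf(c,y_0)}\le LD_Y+\sqrt{2L\Delta_c}$. This is stronger than claimed and needs no transfer step and no Danskin argument.

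Repaired this way, your route genuinely differs from the paper's. The paper takes the proximal point $p$ of $\Phi$ at $c$ and bounds $\norm{2L(c-p)}\le2\sqrt{L\Delta_c}$. It then uses the maximum rule to write $2L(c-p)$ as a convex combination of $\nabla_xf(p,y')$. Finally it transfers to $\nabla_xf(c,y)$ at cost $L\norm{c-p}+LD_Y$, which is where its constant $3$ comes from. Your descent certificate avoids subdifferential calculus for the maximum and gives the constant $\sqrt2$.

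Your strong-monotonicity distance bound is the paper's. Your Sion saddle-point argument for the restriction is valid; the paper instead uses radial projection onto the larger ball, which needs no minimax theorem.
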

\begin{proof}
The maximum $\Phi$ is $L$-weakly convex. Let $p$ be its proximal point with coefficient $L$ and $g=2L(c-p)$. Comparison with the test point $c$ gives $\norm g\le2\sqrt{L\Delta_c}$. Proximal optimality and the maximum rule imply that $g$ is a convex combination of $\nabla_x f(p,y')$ over maximizing dual points $y'$ at $p$. One can apply the convex maximum rule to $\Phi(x)+L\norm x^2/2$ and subtract $Lp$ to see this without using a differentiable value function. Each such gradient differs from $\nabla_xf(c,y)$ by at most $L\norm{c-p}+LD_Y$. The triangle inequality proves \eqref{hd:eq:center-gradient}.

Strong monotonicity of $\nabla_xF_c(\cdot,y)$ bounds the distance of its zero from $c$ by $\norm{\nabla_xf(c,y)}/L$.
For completeness, along any ray from $c$, at radius $r$ the radial derivative of every $F_c(\cdot,y)$ is at least $Lr-\norm{\nabla_xf(c,y)}$. It is strictly positive beyond the asserted common radius. Radial projection onto a larger centered ball therefore decreases every $F_c(\cdot,y)$ outside that ball, and hence decreases their maximum. Thus the restriction does not change the primal optimum either.
\end{proof}

\subsection{Finite-sum implementation of every accelerated dual step}
On $X_c\times Y$, with $X_c$ the ball just justified, apply Lemma~\ref{hd:lem:svrg-residual} to the saddle operator of
\[
 F_c(x,y)-\frac\tau2\norm{y-v_k}^2,
 \qquad \tau=\frac L{\sqrt n}.
\]
The average operator is $\tau$-strongly monotone. To check its mean-squared smoothness, write
\[
 T_{i,c,v_k}(x,y)=G_i(x,y)+(2L(x-c),\tau(y-v_k)).
\]
The common added map has Lipschitz constant $2L$, since $\tau\le L$.
Thus \cref{as:lem:moments} gives
\[
 \left(\frac1n\sum_i\norm{T_{i,c,v_k}(z)-T_{i,c,v_k}(z')}^2\right)^{1/2}
 \le3L\norm{z-z'}\le4L\norm{z-z'}.
\]
Use the conservative value $\ell=4L$ in the solver; no individual
$4L$-Lipschitz bound is asserted. To verify strong monotonicity without
differentiating twice, add the two strong convexity and strong concavity
inequalities at one point to bound its saddle-operator inner product
against another point, then do the same with the points interchanged.
The cross function values cancel, leaving
$L\norm{x-x'}^2+\tau\norm{y-y'}^2$.
The components need not be monotone individually. The normal-cone domain is the product $X_c\times Y$, whose projection is available from the known sets.

Since $(4L/\tau)^2=16n$, each SVRG epoch costs $O(n)$ calls. The residual criterion is exactly \eqref{hd:eq:dual-residual}. The normal components are obtained from the explicit projected-step formula, not from an additional normal-cone oracle. The inner problem is on a compact set of known diameter, irrespective of the magnitude of the extrapolated $v_k$; hence its SVRG distance bound is uniform in $v_k$.
It follows that one original primal proximal problem is solved to objective error $\delta$ in
\begin{equation}\label{hd:eq:prox-cost-informal}
 O\!\left(n\left[1+\sqrt{\frac{LD_Y^2}{\sqrt n\,\delta}}\right]
       \left[1+\log_{+}\frac{(4L)d}{\zeta\sqrt p}\right]\right)
\end{equation}
component calls, with a computable residual check at every accelerated step. Here $d=\operatorname{diam}(X_c)+D_Y$, $K,\zeta$ are as in \eqref{hd:eq:dual-horizon}, and $p$ is a chosen failure probability per accelerated step. On all successful checks the objective-error conclusion is deterministic by Lemma~\ref{hd:lem:accelerated-primal}.

\subsection{Outer descent, public budgets and success probability}
We include the inexact proximal calculation. Suppose $x_{j+1}$ satisfies
\begin{equation}\label{hd:eq:outer-inexact}
 \Phi(x_{j+1})+L\norm{x_{j+1}-x_j}^2
 \le\min_x\{\Phi(x)+L\norm{x-x_j}^2\}+\delta.
\end{equation}
Let $p_j$ be the exact proximal point. Strong convexity gives $\norm{x_{j+1}-p_j}^2\le2\delta/L$, while comparison with $x_j$ gives
$L\norm{x_{j+1}-x_j}^2\le\Phi(x_j)-\Phi(x_{j+1})+\delta$.
Consequently
\begin{equation}\label{hd:eq:outer-descent}
 \frac1S\sum_{j=0}^{S-1}\norm{\nabla\Phi_{1/(2L)}(x_j)}^2
 \le\frac{8L\Delta}{S}+24L\delta.
\end{equation}
This uses only the original weakly convex value function, not a dual-smoothed substitute.

Here are public parameter choices, making the entire algorithm fixed-budget:
\begin{equation}\label{hd:eq:upper-parameters}
 \begin{split}
 \delta&=\epsilon^2/(288L),\quad S=\max\{1,\lceil96A\rceil\},\quad
 \tau=L/\sqrt n,\\
 R_x&=3\sqrt{(\Delta+S\delta)/L}+2D_Y+\epsilon/L,
 \quad d=2R_x+D_Y,\\
 K&=\max\{1,\lceil2\sqrt{\tau D_Y^2/\delta}\rceil\},\quad
 \zeta=\delta/(2Kd),\quad p=1/(12SK).
 \end{split}
\end{equation}
At each center use the ball of radius $R_x$ centered there and run the $K$ accelerated dual steps. At every such step run
\[
 J=\max\left\{1,\left\lceil
       \frac{\log(36(4L)^2d^2/(p\zeta^2))}{\log3}
       \right\rceil\right\}
\]
SVRG epochs, then compute the full residual with two extra passes. Abort and use an arbitrary output if a residual check fails. After completing all $S$ primal steps, output a uniformly sampled center among $x_0,\ldots,x_{S-1}$, independently of the preceding random choices.

Whenever the algorithm has not aborted, all completed outer steps obey \eqref{hd:eq:outer-inexact}; in particular $\Phi(x_j)-\inf\Phi\le\Delta+j\delta$. Lemma~\ref{hd:lem:center-gradient} therefore justifies the same $R_x$ at every subsequent step. Each residual failure has conditional probability at most $p$, uniformly over the prior successful history. A union bound over at most $SK$ checks bounds the abort probability by $1/12$.
On a nonaborted run, \eqref{hd:eq:outer-descent} is at most $\epsilon^2/6$, so the independent uniform output fails the Moreau threshold with probability at most $1/6$. Total failure is at most $1/4<1/3$.

Finally $S=O(1+A)$, $K=O(1+B/n^{1/4})$, and
\[
 \Delta+S\delta=O(\Delta+\epsilon^2/L),\qquad
 \frac{Ld^2}{\delta}=O(1+A+B^2).
\]
Substitution in the residual budget gives $J=O(\log(2+n+A+B))$. Each accelerated dual step has cost $O(nJ)$. Thus the fixed total number of component calls is at most
\[
 C(1+A)(n+n^{3/4}B)\log(2+n+A+B),
\]
proving \eqref{eq:accelerated-main}. This upper proof is for arbitrary instances of the original class, not only for the quadratic inner modules of the lower construction.

\section{A Cone Construction for the Small-Diameter Lower Bound}
\label{app:cone}
This appendix proves \eqref{eq:new-lower}. Its normalized smoothness
constant $L_0$, scalar profiles, and block variables are local to this
construction. The physical parameters are those of \cref{sec:model-main}.
The independent smooth baseline needed for bounded $B$ is established
in \cref{sec:moreau-facts}.

\subsection{The normalized construction}
\label{sec:construction}

\subsubsection{Numerical profiles and radial protection}
Fix a nonnegative \(C^\infty\) density \(k\), supported in \((0,1)\),
with integral one and supremum at most two. Such a density is obtained
by convolving \(2\mathbf 1_{[1/4,3/4]}\) with a sufficiently narrow
nonnegative smooth probability mollifier. Put
\[
\omega(t)=\int_{-\infty}^t k(s)\,ds,\qquad
H(t)=\omega(4t-1),\quad g(t)=\omega(t-1),\quad
\theta(t)=\int_0^t\omega(s)\,ds.
\]
Here \(\theta=0\) for \(t\le0\). These functions have the properties
\begin{align*}
&H=0\ \text{on }(-\infty,1/4],\qquad
  H=1\ \text{on }[1/2,\infty),\\
&g=0\ \text{on }(-\infty,1],\qquad
  g=1\ \text{on }[2,\infty),\qquad 0\le g'\le2,\\
&\theta=0\ \text{on }(-\infty,0],\qquad
  \theta'=1\ \text{on }[1,\infty),\qquad
  0\le\theta'\le1,\quad 0\le\theta(t)\le t_+ .
\end{align*}
All derivative bounds used below are numerical constants.
Since \(g\) is smooth and takes values in \([0,1]\), \(g'=0\)
at every point where \(g\) equals zero or one.

Define a smooth nondecreasing radius profile \(\varphi\) by
\(\varphi(r)=r\) for \(r\le2\),
\(\varphi'(r)=1-\omega(r-2)\) for \(2\le r\le3\), and
\(\varphi'=0\) for \(r\ge3\). Set
\[
\zeta(z)=
\begin{cases}
\varphi(\norm z)\,z/\norm z,&z\ne0,\\
0,&z=0.
\end{cases}
\]
Then
\begin{equation}\label{eq:radial}
\zeta(z)=z\quad(\norm z\le2),\qquad
\norm{\zeta(z)}\le3,\qquad
\norm{D\zeta(z)}_{\op}\le1,\qquad
\norm{D^2\zeta(z)}_{\op}\le K_\zeta
\end{equation}
for a universal \(K_\zeta\). Indeed, the radial and tangential
eigenvalues of \(D\zeta\) are \(\varphi'\) and \(\varphi(r)/r\),
both in \([0,1]\); differentiating once more gives the
dimension-free bound
\(\norm{D^2\zeta}\le|\varphi''|+4|\varphi'-\varphi/r|/r\)
outside \(r=2\). The map is the identity near zero.

Let \(h_1=\norm{H'}_\infty\) and choose a fixed
\[
M_0>2(1+40h_1).
\]
A smooth radial guard \(J(z)=j(\norm z)\) is defined by \(j=0\)
on \(r\le3/2\), and
\[
j''(r)=4M_0\omega(4(r-3/2)),\qquad
j'(r)=\int_{3/2}^rj''(s)\,ds,\qquad
j(r)=\int_{3/2}^rj'(s)\,ds
\]
above that radius. It is nonnegative, \(j'\ge0\),
\(j'(r)\ge M_0\) for \(r\ge2\), and
\(\norm{\nabla^2J}\le4M_0\).

\subsubsection{Hidden sources and the two primal variables}
Take
\begin{equation}\label{eq:normalized-parameters}
R\ge1,\qquad q=\frac{\sqrt n}{R},\qquad
q\ge16,\qquad q\ge n^{1/4}.
\end{equation}
There are \(M\ge1\) stages. For each \(j\), independently choose a
Haar orthonormal \(n\)-frame
\((w_{1j},\ldots,w_{nj})\) in \(\R^d\), where \(d\ge n\)
will be fixed in the information proof. Define
\begin{equation}\label{eq:hidden-source}
v_j=\frac1{\sqrt n}\sum_{i=1}^n w_{ij},\qquad \norm{v_j}=1.
\end{equation}
Stage \(j\) has primal variables \(z_j,u_j\in\R^d\) and
dual variable \(y_j\in\R^d\). The vector \(z_j\) is a unit-scale
source variable; \(u_j\) is a clock of scale \(q\).

For a unit \(v\), write
\[
t=\ip u v,\qquad p=u-tv,\qquad
s=\sqrt{1+\norm u^2},\qquad s_p=\sqrt{1+\norm p^2}.
\]
Define the clock and its exit gate by
\begin{align}
C_v(z,u)
&=s-1+\frac{\norm u^2}{2q}
 -10\theta\!\left(t-s_p+4H(\ip{\zeta(z)}v)\right),
\label{eq:clock}\\
G_v(u)
&=g\!\left(\frac{t-\sqrt{q^2+\norm p^2}}q\right).
\label{eq:exit-gate}
\end{align}
The source latch is added \emph{inside} the argument of \(\theta\).
It is not multiplied by the \(q\)-scale clock potential.

\subsubsection{Actual component functions}
Set \(G_0=1\) and \(G_j=G_{v_j}(u_j)\). The known dual domain is
\[
Y_R=\left\{y=(y_1,\ldots,y_M):\sum_j\norm{y_j}^2\le R^2\right\}.
\]
For \(i\in[n]\), define the fixed smooth function
\begin{equation}\label{eq:components}
\boxed{\quad
f_i(z,u,y)=
\sum_{j=1}^M\bigl[C_{v_j}(z_j,u_j)+J(z_j)\bigr]
+\frac1R\sum_{j=1}^MG_{j-1}
  \ip{\zeta(z_j)-\sqrt n\,w_{ij}}{y_j}.
\quad}
\end{equation}
The average is affine in \(y\), and its value is exactly
\begin{equation}\label{eq:value}
\Phi(z,u)=\sum_{j=1}^M[C_{v_j}(z_j,u_j)+J(z_j)]+S(z,u),
\qquad
S=\left(\sum_{j=1}^MG_{j-1}^2
                \norm{\zeta(z_j)-v_j}^2\right)^{1/2}.
\end{equation}
This identity follows directly from
\(n^{-1}\sum_i\sqrt n\,w_{ij}=v_j\) and maximization over
the Euclidean ball. No unconstrained-dual elimination is used.

\subsection{Global admissibility}
\label{sec:smoothness}
\begin{lemma}[Uniform individual smoothness and gap]\label{lem:admissibility}
There is a universal \(L_0\ge1\), independent of
\(n,R,q,M,d\) and the frame realization, such that every component
\eqref{eq:components} has jointly \(L_0\)-Lipschitz gradient on
\(\R^{2Md}\times Y_R\). Moreover,
\[
\Phi(0)=1,\qquad
\Phi(0)-\inf\Phi\le1+91Mq.
\]
\end{lemma}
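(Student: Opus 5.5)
The plan is to treat the statement as two separate claims: a uniform Hessian bound for each component, and an explicit evaluation of the gap. Both proceed block by block, because each component in \eqref{eq:components} is a sum over stages. The one caveat is that stage $j$'s gate $G_{j-1}$ couples $u_{j-1}$ to $(z_j,y_j)$. So I would bound the Hessian of $f_i$ as a sum of three kinds of pieces: the stage-diagonal potentials $C_{v_j}(z_j,u_j)+J(z_j)$, and the coupling terms $R^{-1}G_{j-1}\langle\zeta(z_j)-\sqrt n w_{ij},y_j\rangle$. Each coupling term involves only the variables $(u_{j-1},z_j,y_j)$, and each variable appears in at most two of them. Hence an operator-norm bound for each piece, uniform in $j$, gives a global bound by at most a constant factor. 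Everything is dimension-free because $\zeta,J,s,s_p$ are radial or norm-based compositions with bounded first and second derivatives.

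For the potentials, I would check the following in turn.
\begin{itemize}
\item The map $u\mapsto s-1$ has Hessian of norm at most $1$, and $\|u\|^2/(2q)$ contributes $1/q\le1$.
\item The functions $t$ and $s_p$ are $1$-Lipschitz with bounded Hessians, since $s_p$ is $\sqrt{1+\|Pu\|^2}$ for an orthogonal projection $P$.
\item $4H(\langle\zeta(z),v\rangle)$ has bounded gradient and Hessian by \eqref{eq:radial} and $\|v\|=1$.
\item Because $\theta'\in[0,1]$ and $\theta''=\omega'$ is bounded by $2$, the chain rule bounds the Hessian of $10\theta(\cdot)$ by a constant.
\item $J$ has Hessian at most $4M_0$.
\end{itemize}

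The coupling term is where the scales must be balanced. The factor $\sqrt n\,w_{ij}$ has norm $\sqrt n$, but $y_j$ ranges over a ball of radius $R$, and there is a $1/R$ prefactor.
\begin{itemize}
\item The mixed $(z_j,y_j)$ derivative is $R^{-1}G_{j-1}D\zeta$, of norm at most $1$.
\item The $zz$ block is $R^{-1}G_{j-1}\langle D^2\zeta,y_j\rangle$, bounded by $K_\zeta$ since $\|y_j\|\le R$.
\item The $u_{j-1}$ derivatives are the dangerous ones, because they multiply $\langle\zeta(z_j)-\sqrt n w_{ij},y_j\rangle$, which can be of size $R(3+\sqrt n)$. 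The gate argument is $(t-\sqrt{q^2+\|p\|^2})/q$, so its gradient is $O(1/q)$ and its Hessian is $O(1/q^2)$ plus $O(1/q)$ times the Hessian of $\sqrt{q^2+\|p\|^2}$, which is $O(1/q)$. Then $\nabla G\,\nabla G^\top$ and $\nabla^2G$ are $O(1/q^2)$. The $uu$ block is therefore $O(R^{-1}\cdot R\sqrt n/q^2)=O(\sqrt n/q^2)=O(R^2/\sqrt n)$. Here I would use $q=\sqrt n/R$ together with $q\ge n^{1/4}$, which gives $R\le n^{1/4}$ and so a bound $O(1)$.
\item The $(u,y)$ block is $R^{-1}\nabla G(\zeta-\sqrt n w)$, of norm $O(\sqrt n/(Rq))=O(1)$.
\item The $(u,z)$ block is $R^{-1}\|\nabla G\|\|D\zeta\|R=O(1/q)$.
\end{itemize}
This bookkeeping, which shows the hypothesis $q\ge n^{1/4}$ is exactly what caps the gate's curvature against the $\sqrt n$-sized affine offsets, is the main obstacle. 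Gradient Lipschitzness on the convex set $\R^{2Md}\times Y_R$ then follows from the Hessian bound along segments. I would take $L_0$ to be the resulting universal constant, made at least $1$.

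For the gap, at the origin I have $s=1$, $u=0$ and $\zeta(0)=0$, so $H(0)=0$ because $H=0$ on $(-\infty,1/4]$. The argument of $\theta$ is $0-1+0<0$, giving $\theta=0$ and $C=0$; also $J(0)=0$. For the gates, $G_0=1$ and $G_{j}(0)=g(-1)=0$ for $j\ge1$. Hence $S(0)=\|\zeta(0)-v_1\|=1$ and $\Phi(0)=1$. For the lower bound I would use $J\ge0$ and $S\ge0$, and bound each clock from below. Since $H\le1$ and $t\le\|u\|\le s$, $\theta(a)\le a_+$ gives $10\theta(\cdot)\le10(t-s_p+4)_+$. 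Using $t-s_p\le\|u\|$, the clock satisfies $C\ge s-1+\|u\|^2/(2q)-10(\|u\|+4)$. Minimizing over $r=\|u\|$ gives at least $-10r-41+r^2/(2q)\ge-50q-41\ge-91q$ after using $q\ge16\ge1$ (more carefully, $\min_r(r^2/(2q)-10r)=-50q$). Summing over $M$ stages gives $\inf\Phi\ge-91Mq$, hence $\Phi(0)-\inf\Phi\le1+91Mq$.
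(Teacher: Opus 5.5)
Your proposal is correct and follows essentially the same route as the paper: the same block-by-block Hessian bounds for the clock/guard potentials and the gated source terms (with $Rq=\sqrt n$ and $q^2\ge\sqrt n$ absorbing the $(\sqrt n+3)R$-size offsets in the $uy$ and $uu$ gate blocks), the same block-diagonal (bounded-overlap) summation and integration along segments of the convex domain, and the same evaluation at the origin together with the clock lower bound $C_v\ge\|u\|^2/(2q)-10\|u\|-41\ge-91q$. The only slip is cosmetic: each source term is linear in $G_{j-1}$, so the rank-one piece in its $uu$ block is $g''\,\nabla\xi\,\nabla\xi^{\mathsf T}$ inside $\nabla^2G$, not $\nabla G\,\nabla G^{\mathsf T}$; your $O(1/q^2)$ bound is unaffected.
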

\begin{proof}
Let \(h_2=\norm{H''}_\infty\), \(t_2=\norm{\theta''}_\infty\).
For
\(a_v=t-s_p+4H(\ip{\zeta(z)}v)\), the composition rule gives
\[
\norm{\nabla a_v}^2\le2+16h_1^2,\qquad
\norm{\nabla^2a_v}\le
\max\{1,4(h_2+h_1K_\zeta)\}.
\]
Thus \(\norm{\nabla^2C_v}\le L_C\) for a universal \(L_C\).
The first derivative of the quadratic clock need not be bounded;
only its Hessian is being bounded.

If \(K_{G,1}=\sqrt2\norm{g'}_\infty\) and
\(K_{G,2}=2\norm{g''}_\infty+\norm{g'}_\infty\), then
\[
\norm{\nabla G_v}\le K_{G,1}/q,\qquad
\norm{\nabla^2G_v}\le K_{G,2}/q^2.
\]
For one source term
\(\ell_{ij}=R^{-1}G_{j-1}\ip{\zeta(z_j)-\sqrt n w_{ij}}{y_j}\),
the nonzero Hessian block norms are bounded by
\[
\begin{array}{c|c@{\qquad}c|c}
\text{block}&\text{bound}&\text{block}&\text{bound}\\ \hline
uu&K_{G,2}(\sqrt n+3)/q^2&
uy&K_{G,1}(\sqrt n+3)/(Rq)\\
uz&K_{G,1}/q&zz&K_\zeta\\
zy&1/R&yy&0 .
\end{array}
\]
Here \(\norm{y_j}\le R\) throughout the specified domain.
The first term omits \(u_0\).
Conditions \eqref{eq:normalized-parameters} imply
\[
\frac{\sqrt n+3}{Rq}\le4,\qquad
\frac{\sqrt n+3}{q^2}\le4,\qquad R^{-1}\le1.
\]
Different \(\ell_{ij}\) use disjoint variable groups
\((u_{j-1},z_j,y_j)\). Their Hessian sum is therefore block
diagonal after a permutation of coordinates. The clock-and-guard
sum is separately block diagonal in \((z_j,u_j)\).
Bounding a local block matrix by the scalar matrix of its block norms
gives, for example, the universal admissible choice
\[
L_0=\max\{1,L_C+4M_0+4K_{G,2}+5K_{G,1}+K_\zeta+2\}.
\]
Integrating the Hessian on segments in the convex product domain proves
the joint Lipschitz-gradient inequality.

At the origin, all clocks and guards vanish, every \(G_j\), \(j\ge1\),
is zero, and the sole active residual is \(-v_1\). Hence \(\Phi(0)=1\).
Since
\(\theta(a_v)\le(a_v)_+\le\norm u+4\),
\[
C_v(z,u)\ge \frac{\norm u^2}{2q}-10\norm u-41
           \ge-50q-41\ge-91q.
\]
The guards and \(S\) are nonnegative, which proves the gap bound.
\end{proof}

\subsection{Every small subgradient recovers every source}
\label{sec:stationarity}
Fix \(\delta=10^{-3}\). Write
\(r_j=\zeta(z_j)-v_j\) and
\(\mathcal R=(G_{j-1}r_j)_{j=1}^M\), so \(S=\norm{\mathcal R}\).
Every Clarke subgradient \(w\) of \(\Phi\) has the form
\begin{equation}\label{eq:subgradient}
w=\nabla\sum_j(C_{v_j}+J)+D\mathcal R^\top a,\qquad
\begin{cases}
a=\mathcal R/S,&S>0,\\
\norm a\le1,&S=0.
\end{cases}
\end{equation}
This follows either from the norm-composition rule or directly from
Danskin's formula for the compact dual ball. It also contains every
subgradient needed by proximal optimality.

\begin{lemma}[Guard and gate separation]\label{lem:binary}
If \(w\in\partial_C\Phi(z,u)\) and \(\norm w\le\delta\), then
\(\norm{z_j}<2\) and \(G_j\in\{0,1\}\) for every \(j\).
\end{lemma}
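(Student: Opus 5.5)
The plan is to read off the $z_j$-block and the $u_j$-block of the representation \eqref{eq:subgradient} and show that each block has norm well above $\delta$ unless the claimed conclusions hold. Only the profile bounds of \cref{sec:construction} are needed: \eqref{eq:radial}, $0\le\theta'\le1$, $|H'|\le h_1$, the choice $M_0>2(1+40h_1)$, and the normalizations \eqref{eq:normalized-parameters}. Throughout, $\norm a\le1$, so every block $a_j$ satisfies $\norm{a_j}\le1$, and $\norm{r_j}\le\norm{\zeta(z_j)}+1\le4$.

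\emph{Guard step.} The variable $z_j$ enters only $C_{v_j}$ (through the latch $4H(\ip{\zeta(z_j)}{v_j})$), the guard $J(z_j)$, and the residual entry $G_{j-1}r_j$. Hence
\[
w_{z_j}=-40\,\theta'(\cdot)H'(\cdot)\,D\zeta(z_j)^\top v_j+\nabla J(z_j)+G_{j-1}D\zeta(z_j)^\top a_j .
\]
The first term has norm at most $40h_1$, since $\norm{D\zeta}_{\op}\le1$. The last term has norm at most $1$, since $0\le G_{j-1}\le1$. If $\norm{z_j}\ge2$, then $\nabla J(z_j)=j'(\norm{z_j})z_j/\norm{z_j}$ has norm at least $M_0$. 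This gives $\norm w\ge M_0-1-40h_1>1+40h_1>\delta$, a contradiction. So $\norm{z_j}<2$ for every $j$.

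\emph{Gate step.} Suppose, for contradiction, that $G_j\notin\{0,1\}$. Then the gate argument satisfies
\[
\frac{t-\sqrt{q^2+\norm p^2}}{q}\in(1,2),\qquad\text{that is,}\qquad t-\sqrt{q^2+\norm p^2}\in(q,2q),
\]
with $t=\ip{u_j}{v_j}$ and $p=u_j-tv_j$. The variable $u_j$ appears in $C_{v_j}$ and, through $G_j$, in the residual entry $G_jr_{j+1}$ (absent when $j=M$). Therefore
\[
w_{u_j}=\Bigl(\frac1s+\frac1q\Bigr)u_j-10\,\theta'(a_{v_j})\Bigl(v_j-\frac{p}{s_p}\Bigr)+\ip{r_{j+1}}{a_{j+1}}\nabla G_j .
\]
The cross term has norm at most $4K_{G,1}/q\le K_{G,1}/4$, using $q\ge16$.

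Next, the clock argument satisfies
\[
a_{v_j}\ge t-s_p\ge q+\sqrt{q^2+\norm p^2}-\sqrt{1+\norm p^2}\ge q\ge16,
\]
so $\theta'(a_{v_j})=1$. Now split into two cases.
\begin{itemize}
\item If $\norm p\ge1$, project onto the orthogonal complement of $v_j$. That component equals $p\,(1/s+1/q+10/s_p)$, whose norm is at least $10\norm p/\sqrt{1+\norm p^2}\ge5\sqrt2$.
\item If $\norm p<1$, project onto $v_j$. The gate range gives $t\le 2q+\sqrt{q^2+1}\le3q+1$, so the $v_j$-component is $t/s+t/q-10\le1+3+1/q-10<-5$.
\end{itemize}
In either case the clock part of $w_{u_j}$ has norm at least $5$. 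Hence $\norm w\ge5-K_{G,1}/4>\delta$, a contradiction.

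I expect the main obstacle to be the final comparison against the cross term. The bound $4K_{G,1}/q$ must be strictly below the clock lower bound of $5$, and this depends on $\norm{g'}_\infty\le2$ and the normalization $q\ge16$. Both should suffice, since $K_{G,1}\le2\sqrt2$ gives $K_{G,1}/4<1$. A second point to check is that the Clarke representation \eqref{eq:subgradient} is used blockwise only through $\norm a\le1$, so that the case $S=0$ needs no separate treatment.
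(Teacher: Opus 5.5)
Your proof is correct. The guard step matches the paper's. The gate step reaches the same contradiction by a different, cruder route.

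The paper keeps the gate cross term $\alpha_j\nabla G_j$ split into components. It computes $\alpha_j$ explicitly: $\alpha_j=G_j\|r_{j+1}\|^2/S$ when $S>0$, and $\alpha_j=0$ when $S=0$, because a partial gate forces $r_{j+1}=0$. It absorbs the perpendicular part of the cross term into the coefficient of $p$, bounding that coefficient below by $10/\sqrt{1+r^2}+1/(2q)$, and deduces that a small subgradient forces $\|p\|<0.001$. Only then does it use the parallel equation, with $t/q<3.01$, to get $\langle w_{u_j},v_j\rangle<-5$.

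You instead bound $|\langle r_{j+1},a_{j+1}\rangle|\le 4$ uniformly from $\|a\|\le1$. You then treat the whole cross term as a perturbation of norm at most $4K_{G,1}/q\le\sqrt2/2$. Finally, you bound the clock part below by $5$ through the case split $\|p\|\ge1$ (perpendicular force at least $5\sqrt2$) versus $\|p\|<1$ (parallel force below $-5$).

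What your route buys is a shorter argument with no case analysis on $S$ and no need to localize $p$. It uses the explicit constant $K_{G,1}=\sqrt2\|g'\|_\infty\le 2\sqrt2$ from the admissibility lemma, where the paper uses $|g'|\le2$ componentwise. It also leaves generous slack against $\delta=10^{-3}$. The paper's route extracts more structure, namely that $p$ is essentially zero at such a point, in the same style as the clock fact \eqref{eq:clock-fact} of the next lemma. That extra information is not needed for this contradiction.
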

\begin{proof}
The source contribution to a \(z_j\)-block in \eqref{eq:subgradient}
has norm at most one. The clock's \(z_j\)-gradient has norm at most
\(40h_1\). At \(\norm{z_j}\ge2\), the guard's radial derivative
is at least \(M_0>2(1+40h_1)\), making a small subgradient impossible.
Thus \(\zeta(z_j)=z_j\) at the point in question.

Suppose \(0<G_j<1\). With \(r=\norm p\) and
\(\xi=(t-\sqrt{q^2+r^2})/q\), one has \(1<\xi<2\), and
\[
t-\sqrt{1+r^2}+4H\ge
q+\sqrt{q^2+r^2}-\sqrt{1+r^2}\ge q\ge1.
\]
Hence \(\theta'=1\).
Only the next source term can add to the \(u_j\)-gradient.
It adds \(\alpha_j\nabla G_j\), where \(|\alpha_j|\le4\):
for \(S>0\),
\(\alpha_j=G_j\norm{r_{j+1}}^2/S\le\norm{r_{j+1}}\le4\);
for \(S=0\) and a partial gate, \(r_{j+1}=0\);
for \(j=M\), the term is absent.
The perpendicular part of \(w_{u_j}\) is consequently
\[
\left(\frac1s+\frac1q+\frac{10}{\sqrt{1+r^2}}
-\frac{\alpha_jg'(\xi)}{q\sqrt{q^2+r^2}}\right)p.
\]
For \(q\ge16\) its coefficient is at least
\(10/\sqrt{1+r^2}+1/(2q)\). Therefore
\[
\frac{10r}{\sqrt{1+r^2}}\le\delta,\qquad
r\le\frac{\delta}{\sqrt{100-\delta^2}}<0.001.
\]
It follows that \(t/q=\sqrt{1+r^2/q^2}+\xi<3.01\).
The parallel component, however, satisfies
\[
\ip{w_{u_j}}v
=\frac ts+\frac tq-10+\frac{\alpha_jg'(\xi)}q
\le1+3.01-10+\frac8q<-5,
\]
contradicting \(\norm w\le\delta\).
\end{proof}

\begin{lemma}[Zero residual and clock completion]\label{lem:all-sources}
Under the hypotheses of \cref{lem:binary}, \(S=0\), and
\[
z_j=v_j,\qquad H(\ip{z_j}{v_j})=1,\qquad G_j=1
\quad\text{for every }j.
\]
\end{lemma}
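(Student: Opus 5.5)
The plan is to first settle $S=0$ by a descent-direction argument on the source variables alone, and then propagate through the stages by induction on $j$, using the clock coordinates to force every gate open. By \cref{lem:binary} we may assume throughout that $\norm{z_j}<2$, so $\zeta(z_j)=z_j$ and $D\zeta=I$ at every $z_j$, and that every $G_j\in\{0,1\}$. In particular each gate gradient $\nabla G_j$ vanishes, because $g'=0$ wherever $g\in\{0,1\}$. The subgradient \eqref{eq:subgradient} therefore has a clean block form. In the $z_j$-block, the source part is $G_{j-1}a_j$. There, $a_j=G_{j-1}r_j/S$ when $S>0$.

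\textbf{Step 1: $S=0$.} Suppose $S>0$, and test $w$ against the correction direction $d$ with $d_{z_j}=G_{j-1}(v_j-z_j)=-G_{j-1}r_j$ and all other blocks zero. Then $\norm d=S$, and the source part contributes $\ip{D\mathcal R^\top a}{d}=-\sum_jG_{j-1}^2\norm{r_j}^2/S=-S$. I will check that the other two terms are nonpositive, which is the promised ``nonnegative inner product with every correction'' (with the sign convention $-d$).
\begin{itemize}
\item The clock term is $-40\,\theta'(a_{v_j})H'(\ip{z_j}{v_j})\,(1-\ip{z_j}{v_j})$. Here $H'\ge0$, and $H'$ vanishes when $\ip{z_j}{v_j}\ge1/2$, so the term is $\le0$.
\item The guard term is $j'(\norm{z_j})\ip{z_j/\norm{z_j}}{v_j-z_j}$. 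It is nonzero only when $\norm{z_j}>3/2>\norm{v_j}$, and then $\ip{z_j}{v_j-z_j}\le\norm{z_j}-\norm{z_j}^2<0$.
\end{itemize}
Hence $\ip wd\le-S=-\norm d$, which gives $\norm w\ge1>\delta$, a contradiction.

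\textbf{Step 2: induction on stages.} With $S=0$ we have $r_j=0$ whenever $G_{j-1}=1$. Since $G_0=1$, this gives $z_1=v_1$, and then $H(\ip{z_1}{v_1})=H(1)=1$. It remains to show that $G_j=1$ whenever $z_j=v_j$; the induction then continues, because $G_j=1$ forces $r_{j+1}=0$.

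So suppose $G_j=0$, meaning $\xi\le1$ and hence $t\le q+\sqrt{q^2+r^2}$ with $r=\norm p$. Then the $u_j$-block of $w$ is the clock gradient alone, because the next source term is multiplied by $\nabla G_j=0$. The latch argument is $t-s_p+4$.
\begin{itemize}
\item The perpendicular coefficient is at least $1/s$, so $\norm p/s\le\delta$. Since $s\le1+|t|+\norm p$, this makes $\norm p$ small in the relevant ranges, with $s_p\approx1$.
\item The parallel component is $t/s+t/q-10\theta'(t-s_p+4)$. I will split on $t$. For $t\ge-2$ the argument exceeds $1$, so $\theta'=1$, and since $t\lesssim 2q$ the component is at most $1+2.01-10<-5$. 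For $t<-2$, both $t/s$ and $t/q$ are negative and $-10\theta'\le0$, so the component is at most $t/s\lesssim-0.8$.
\end{itemize}
Either way $\norm w>\delta$, so $G_j=1$.

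I expect the main obstacle to be this last case analysis. The small-$\norm p$ estimate has to be made uniform when $|t|$ is large. The band where $\theta'\in(0,1)$ also needs care, and I would handle it exactly as in \cref{lem:binary}: first bound $\norm p$ from the perpendicular part, then read the sign of the parallel part off the monotone profiles, using $0\le\theta'\le1$ and $\theta'=1$ on $[1,\infty)$.
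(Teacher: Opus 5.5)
Your proposal is correct and is essentially the paper's proof. Your test direction $d$ is, up to sign and the factor $S$, the paper's unit vector $(r_j/S)_{j\in\mathcal A}$, and your Step~2 is the contrapositive of the paper's clock fact \eqref{eq:clock-fact} (a closed gate with $H_j=1$ forces a large clock gradient), followed by the same induction. Two numerical slips are harmless but should be fixed: $G_j=0$ only yields $\xi<2$ (so $t/q<3.01$, which still makes the parallel component at most $-5$), and the case split should be at, say, $t=-1$ rather than $t=-2$, since $\norm{p}\le\frac{\delta}{1-\delta}(1+|t|)$ gives $t-s_p+4>1$ for $t\ge-1$ and $t/s<-0.7$ for $t<-1$, whereas at $t=-2$ one only has $t-s_p+4\le1$.
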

\begin{proof}
All gates are binary. If \(S>0\), let
\(\mathcal A=\{j:G_{j-1}=1\}\). On an active block the source
gradient is \(r_j/S=(z_j-v_j)/S\).
The clock correction is \(-\tau_jv_j\), where
\[
\tau_j=40\theta'(a_{v_j})H'(\ip{z_j}{v_j})\ge0.
\]
If \(\tau_j>0\), then \(1/4<\ip{z_j}{v_j}<1/2\); hence
\(\ip{r_j}{-\tau_jv_j}\ge0\).
The guard also has nonnegative inner product with \(r_j\), since it is
radial and starts only at radius \(3/2>\norm{v_j}\).
Taking the inner product of the full \(z\)-gradient with
\((r_j/S)_{j\in\mathcal A}\), a unit vector, gives at least one.
Thus \(S>0\) contradicts \(\norm w\le\delta\).

We next verify a uniform clock fact:
\begin{equation}\label{eq:clock-fact}
H=1,\qquad \norm{\nabla_u C_v}\le0.1
\quad\Longrightarrow\quad G_v(u)=1.
\end{equation}
For \(H=1\), put \(a=t-s_p+4\). Then
\[
\nabla_u C_v=
\left(\frac ts+\frac tq-10\theta'(a)\right)v+
\left(\frac1s+\frac1q+\frac{10\theta'(a)}{s_p}\right)p.
\]
If \(t<0\), the cone force has nonnegative inner product with \(u\);
therefore \(\norm{\nabla_uC_v}\ge\norm u/s\).
A norm at most \(0.1\) would give \(\norm u<0.101\), and then
\(a>2.89\), \(\theta'=1\), and the negative parallel component
has magnitude at least ten, a contradiction. Thus \(t\ge0\).
If \(a<1\), then \(t<s_p-3\) and \(s_p>3\).
Since \(s<\sqrt2\,s_p\), this yields
\(\norm p/s>2/3\), again inconsistent with a small gradient.
Hence \(\theta'=1\). The perpendicular and parallel equations give
\[
\norm p\le\frac{0.1}{\sqrt{100-0.1^2}},
\qquad \frac tq\ge8.9.
\]
For \(q\ge1\), the argument of \(g\) is therefore greater than two,
proving \eqref{eq:clock-fact}.

Finally \(G_0=1\). Since \(S=0\), this forces \(z_1=v_1\)
and \(H_1=1\). If \(G_1=0\), then \(\nabla G_1=0\) and the
full \(u_1\)-subgradient is exactly the clock gradient, contradicting
\eqref{eq:clock-fact}. Thus \(G_1=1\).
Inductively \(G_{j-1}=1\) forces \(z_j=v_j\), \(H_j=1\) and
\(G_j=1\). This also covers the terminal clock.
\end{proof}

\begin{corollary}[Original Moreau certificate]\label{cor:moreau-source}
If \(\norm{\nabla\Phi_{1/(2L_0)}(\widehat x)}\le\delta\), then
for every \(j\),
\begin{equation}\label{eq:output-source}
\norm{\widehat z_j-v_j}\le\frac{\delta}{2L_0}.
\end{equation}
\end{corollary}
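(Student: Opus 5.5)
The argument passes from the Moreau gradient at \(\widehat x\) to a small Clarke subgradient at the proximal point, and then applies \cref{lem:all-sources} at that point. Write \(p=p(\widehat x)\) for the unique minimizer of \(\Phi(x)+L_0\norm{x-\widehat x}^2\). This minimizer exists and is unique because \(\Phi\) is \(L_0\)-weakly convex: by \cref{lem:admissibility} every component has \(L_0\)-Lipschitz gradient, and maximizing over the compact ball \(Y_R\) preserves weak convexity, as in \cref{lem:moreau}. That lemma also gives \(\nabla\Phi_{1/(2L_0)}(\widehat x)=2L_0(\widehat x-p)\). The hypothesis therefore yields two facts at once:
\[
\norm{\widehat x-p}\le\frac{\delta}{2L_0},\qquad
w:=2L_0(\widehat x-p)\ \text{with}\ \norm w\le\delta .
\]

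The next step shows that \(w\) is a Clarke subgradient of \(\Phi\) at \(p\). First-order optimality of the proximal objective gives \(0\in\partial_C\Phi(p)+2L_0(p-\widehat x)\), that is, \(w\in\partial_C\Phi(p)\). This sum rule is legitimate because the quadratic term is smooth. Moreover, \(\Phi\) is a finite maximum of smooth functions over the compact set \(Y_R\), so its Clarke subdifferential agrees with the Fréchet subdifferential of the weakly convex function and is given by the Danskin form \eqref{eq:subgradient}. This is exactly the remark after \eqref{eq:subgradient} that it "contains every subgradient needed by proximal optimality."

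Since \(\norm w\le\delta\), \cref{lem:binary,lem:all-sources} now apply at \(p\). Hence the source blocks of \(p\) satisfy \(p_{z_j}=v_j\) for every \(j\). Projecting the displacement onto the \(z_j\)-block can only shrink it, so
\[
\norm{\widehat z_j-v_j}=\norm{\widehat z_j-p_{z_j}}\le\norm{\widehat x-p}\le\frac{\delta}{2L_0},
\]
which is \eqref{eq:output-source}.

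The main point to verify carefully is the identification of the proximal optimality condition with the subgradient class \eqref{eq:subgradient} for which \cref{lem:binary} was proved. One must check that the limiting/Fréchet subgradient produced by proximal optimality is contained in the Clarke set. For a weakly convex function all these subdifferentials coincide with the convex subdifferential of \(\Phi+(L_0/2)\norm\cdot^2\) shifted by \(-L_0x\), and Danskin's formula over \(Y_R\) describes that set. Everything else is direct substitution.
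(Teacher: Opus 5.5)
Your proposal is correct and follows the paper's proof essentially step for step. You pass to the unique proximal point $p$ of the $L_0$-weakly convex $\Phi$ and observe that $2L_0(\widehat x-p)=\nabla\Phi_{1/(2L_0)}(\widehat x)$ is a subgradient of norm at most $\delta$ at $p$. You then apply \cref{lem:binary,lem:all-sources} to obtain $p_{z_j}=v_j$ and transfer back using $\norm{\widehat x-p}\le\delta/(2L_0)$. Your added check that the proximal-optimality subgradient lies in the Clarke/Danskin class \eqref{eq:subgradient} spells out a step the paper only asserts.
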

\begin{proof}
Individual smoothness implies that \(\Phi\) is \(L_0\)-weakly convex.
Its proximal minimizer \(x^*\) is unique and satisfies
\[
2L_0(\widehat x-x^*)=\nabla\Phi_{1/(2L_0)}(\widehat x)
\in\partial\Phi(x^*).
\]
The preceding lemmas imply \(z_j^*=v_j\).
Since \(\norm{x^*-\widehat x}\le\delta/(2L_0)\),
\eqref{eq:output-source} follows. The proximal point is not being
treated as an algorithm query.
\end{proof}

\subsection{Unrestricted adaptive information cost}
\label{sec:information}
The information proof uses conditionally uniform orthonormal frames. The proof checks the exact values and joint gradients of the concrete
components \eqref{eq:components}.

\subsubsection{A uniform conditional spherical estimate}
For one stage, after \(k\) distinct rows have been revealed,
conditional on those rows and all independent public information,
\begin{equation}\label{eq:haar-decomposition}
v=v_{\rm known}+\sqrt{1-k/n}\,U,\qquad
\norm{v_{\rm known}}=\sqrt{k/n},
\end{equation}
where \(U\) is uniform on the unit sphere in the orthogonal complement
of the revealed rows, of dimension \(d-k\).
This follows from the rotational invariance of the unrevealed
orthonormal frame. It remains true for adaptive row indices:
an unexposed column has the same conditional law irrespective of
its label.

Let
\[
\tau=\frac1{24},\qquad
k_0=\left\lfloor\frac n{9216}\right\rfloor.
\]
For \(k<k_0\) and any history-measurable vector \(h\) with
\(\norm h\le1\), \(\norm{v_{\rm known}}\le\tau/4\).
The elementary spherical-cap inequality yields
\begin{equation}\label{eq:cap-bound}
\Prob\bigl(|\ip h v|>\tau\mid\text{ideal history}\bigr)
\le2\exp\!\left(-\frac{(d-n-2)\tau^2}{8}\right).
\end{equation}
For completeness, a uniform sphere vector \(U\in\R^m\) has moments
\[
\E U_1^{2\ell}
=\frac{(2\ell-1)!!}{m(m+2)\cdots(m+2\ell-2)}
\le\frac{(2\ell-1)!!}{m^\ell}.
\]
Odd moments vanish. Rotating a vector of norm at most one onto the first
coordinate and summing its exponential series gives
\(\E e^{t\ip hU}\le e^{t^2/(2m)}\).
Chernoff's inequality gives
\(\Prob(|\ip hU|>a)\le2e^{-ma^2/2}\).
Project \(h\) onto the unrevealed subspace; its norm is still at most one.
The known contribution is at most \(\tau/4\), so a violation requires
the spherical contribution to exceed \(\tau/2\) in absolute value.
This proves the relaxed bound \eqref{eq:cap-bound}.
It is uniform in the norm of the original query after normalization.

\subsubsection{The ideal oracle and its coupling}
Fix any algorithm with at most \(Q<Mk_0\) calls, including its private
seed as part of the ideal history. A stage is completed when \(k_0\)
distinct rows have been granted. Completed stages form a prefix, because
only the earliest unfinished stage receives rows.
An ideal experiment operates as follows.

\begin{enumerate}[leftmargin=*,itemsep=3pt]
\item Freeze the completed prefix for the current reply.
Frames of completed stages are recorded in full in the analyst's
enlarged filtration. For each unfinished
stage, censor its hidden part of \(C_v\), retaining only
\(\sqrt{1+\norm u^2}-1+\norm u^2/(2q)\), and set its gate \(G_v\)
and the gate's first derivatives to zero.
\item Internally reveal the selected row of the earliest unfinished
stage, even if its coefficient in the reply is zero. Repeated indices
do not increase the row count. Form the value and gradient reply of the
censored formula using that row and the frozen completed prefix.
\item After forming the reply, if that stage reaches \(k_0\) distinct
rows, grant its entire frame for free. This affects subsequent
queries, not the reply just formed.
\end{enumerate}

These internal reveals are included in the enlarged filtration for
analysis; the original algorithm receives only the computed value and
gradient replies. Its next query is nevertheless measurable with respect
to that larger filtration. Thus the internal reveals introduce no
extra restriction on the algorithm and no artificial observation into
its original-format transcript.
There is at most one unfinished stage receiving a row on any call.
Thus fewer than \(Mk_0\) calls leave at least one unfinished stage.

For every ideal query, and for the final ideal output, test every stage
that is unfinished at that time against the two vectors of norm at most one
\[
h_z=\zeta(z)/3,\qquad
h_u=
\begin{cases}u/\norm u,&u\ne0,\\0,&u=0.\end{cases}
\]
They are measurable before the relevant fresh row is granted.
Using \eqref{eq:cap-bound} and a union bound, the probability of any
test with \(|\ip{h_z}v|>\tau\) or
\(|\ip{h_u}v|>\tau\) is at most
\begin{equation}\label{eq:bad-probability}
4M(Q+1)\exp\!\left(-\frac{(d-n-2)\tau^2}{8}\right).
\end{equation}
Choose the finite dimension
\begin{equation}\label{eq:dimension}
d\ge n+3+\frac8{\tau^2}\log(24M(Q+1)).
\end{equation}
Then \eqref{eq:bad-probability} is at most \(1/6\).
One may use \(Q=Mk_0-1\) in this dimension choice once and for all,
thereby covering every smaller budget.

On the complementary event, an unfinished stage has
\(\ip{\zeta(z)}v\le3\tau=1/8\), so \(H=H'=0\).
Moreover \(\ip{u/\norm u}v\le\tau<1/\sqrt2\).
The hidden clock can be active with \(H=0\) only if
\[
\ip u v>\sqrt{1+\norm{u-\ip u v\,v}^2},
\]
which would require normalized correlation greater than \(1/\sqrt2\).
The exit gate can be positive only inside an even narrower cone.
Hence the clock's hidden value and full first jet vanish, and
\(G=0,\nabla G=0\), exactly as in the ideal formula.

This checks \emph{all} hidden dependence in the true reply:
the uncensored source term of the earliest unfinished stage is
\[
\frac{G_{\rm prev}}R
\ip{\zeta(z)-\sqrt n\,w_i}{y},
\]
whose value and gradients use only its selected row and
completed-stage data. It contains no additional hidden mean \(v\).
Later source terms have both zero gate and zero gate gradient.
The true and ideal transcripts therefore coincide inductively on the
good event, including threshold queries, arbitrary real arithmetic
and the final output. The cap bounds were taken under the ideal
history without conditioning on earlier good events; thus no
unjustified conditional Haar assertion is used.

\subsubsection{Contradiction at a successful output}
An unfinished stage still exists when \(Q<Mk_0\). On the good event,
its output satisfies
\(\ip{\zeta(\widehat z)}v\le1/8\).
In contrast, \cref{cor:moreau-source} would give
\(\norm{\widehat z-v}\le\delta/(2L_0)<0.001\).
Then \(\norm{\widehat z}<2\), \(\zeta(\widehat z)=\widehat z\), and
\(\ip{\zeta(\widehat z)}v>0.999\).
Thus success is impossible on the good event.

\begin{theorem}[Fixed-budget normalized lower bound]\label{thm:normalized-lower}
For sufficiently large universal \(n\), the static distribution of
\eqref{eq:components}, in the finite dimension \eqref{eq:dimension},
has the following property. Every unrestricted randomized algorithm
using fewer than \(Mk_0\) queries has probability at most \(1/6\)
of producing a \(\delta\)-stationary point of
\(\Phi_{1/(2L_0)}\). In particular its worst-case fixed-budget
complexity is \(\Omega(Mn)\).
\end{theorem}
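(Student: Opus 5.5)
The plan is to assemble the pieces already established in this appendix into a single argument, with the coupling as the core. Fix any randomized algorithm, with private seed $\omega$, that always uses at most $Q<Mk_0$ queries. Take $d$ as in \eqref{eq:dimension} with $Q=Mk_0-1$, so the instance law is a fixed product-Haar law that depends neither on the algorithm nor on its seed. By \cref{lem:admissibility}, every realization is an admissible instance with universal smoothness $L_0$ and gap at most $1+91Mq$. The theorem is a statement about the normalized class, so the physical rescaling is deferred to the later comparison.

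First, run the true oracle and the ideal censored oracle of the preceding subsection on the same frames and the same seed. Let $\mathcal G$ be the event that every test, at every ideal query and at the final ideal output, against every stage unfinished at that time, satisfies $|\ip{h_z}v|\le\tau$ and $|\ip{h_u}v|\le\tau$. Each test vector is measurable with respect to the enlarged ideal history before the relevant fresh rows are revealed. The conditional decomposition \eqref{eq:haar-decomposition} holds because only $k<k_0$ rows of an unfinished stage have been exposed. The cap bound \eqref{eq:cap-bound} and a union bound then give $\Prob(\mathcal G^c)\le1/6$, by \eqref{eq:bad-probability} and the choice of $d$. An induction over queries shows that on $\mathcal G$ the true and ideal replies coincide: each unfinished stage has $H=H'=0$, the hidden clock jet vanishes outside the cone of correlation $1/\sqrt2$, and $G=\nabla G=0$. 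The only uncensored term reads only the selected row and completed-stage data. Hence, on $\mathcal G$, the true transcript and the output $\widehat x$ equal their ideal counterparts.

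Second, observe that each call grants at most one new row, and only to the earliest unfinished stage. With $Q<Mk_0$ calls, some stage $j$ is therefore still unfinished when the output is produced. On $\mathcal G$, the output test gives $\ip{\zeta(\widehat z_j)}{v_j}\le\tau\cdot3=1/8$. Suppose instead that $\norm{\nabla\Phi_{1/(2L_0)}(\widehat x)}\le\delta$. Then \cref{cor:moreau-source}, which rests on \cref{lem:binary,lem:all-sources}, gives $\norm{\widehat z_j-v_j}<10^{-3}$. This forces $\zeta(\widehat z_j)=\widehat z_j$ and $\ip{\widehat z_j}{v_j}>0.999$, a contradiction. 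Success is therefore contained in $\mathcal G^c$ and has probability at most $1/6<2/3$. Since $k_0=\lfloor n/9216\rfloor\ge1$ for large $n$, we have $Mk_0=\Omega(Mn)$. A fixed budget achieving success $2/3$ must then be at least $Mk_0$, which is $\Omega(Mn)$.

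The main obstacle is making the coupling rigorous without conditioning the Haar law on earlier good events. I would define every probability under the ideal filtration, in which the completed frames and the revealed rows are recorded, and treat the true/ideal agreement as a deterministic consequence of $\mathcal G$ rather than a conditional statement. I would also check that the final unqueried output and arbitrary query norms are covered by normalizing $h_z,h_u$.
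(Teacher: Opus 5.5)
Your proposal is correct and follows the paper's proof essentially step for step. It uses the same ideal censored oracle coupled to the true oracle on the good event of the spherical-cap tests, with $d$ fixed once via $Q=Mk_0-1$ and no conditioning of the Haar law on earlier good events. It then uses the same pigeonhole argument that some stage stays unfinished under fewer than $Mk_0$ calls, and the same contradiction between $\langle\zeta(\widehat z_j),v_j\rangle\le 1/8$ and the Moreau certificate of \cref{cor:moreau-source}. The only step you leave implicit is one the paper states outright: average success at most $1/6$ under the static distribution rules out a success guarantee of $2/3$ on every instance.
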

The distribution is fixed before the algorithm runs. The argument
allows the ideal query sequence to depend on the entire transcript
and the private seed, so it directly covers randomized algorithms.
A success guarantee of \(2/3\) on every instance is incompatible with
average success at most \(1/6\) on this distribution.

\subsection{Scaling and the small-gap regime}
\label{sec:scaling}
First take \(n\ge65536\), which ensures \(q\ge n^{1/4}\ge16\)
and \(k_0\ge n/18432\). Smaller \(n\) are handled below.
Let
\[
B_0=\frac{2L_0}{\delta},\qquad K_0=\frac{L_0}{\delta^2}.
\]
First suppose \(B\ge B_0\) and
\(R=B/B_0\le n^{1/4}\). Set
\[
\sigma=\frac{\eps L_0}{L\delta},\qquad
\alpha=\frac{\eps^2L_0}{L\delta^2},\qquad
q=\frac{\sqrt n}{R}=\frac{B_0\sqrt n}{B}.
\]
Use the physical functions and domain
\[
f_i^{\rm phys}(x,y)=\alpha f_i(x/\sigma,y/\sigma),
\qquad Y_{\rm phys}=\sigma Y_R.
\]
Their individual smoothness is at most
\(\alpha L_0/\sigma^2=L\), and their dual diameter is
\(2\sigma R=D_Y\). A physical query is simulated by one normalized
query. The exact Moreau scaling is
\[
\Phi^{\rm phys}_{1/(2L)}(\sigma x)
=\alpha\Phi_{1/(2L_0)}(x),\qquad
\nabla\Phi^{\rm phys}_{1/(2L)}(\sigma x)
=\frac{\eps}{\delta}\nabla\Phi_{1/(2L_0)}(x).
\]
It follows by substituting \(z=\sigma Z\) in the proximal minimization;
no smooth-value/Moreau comparison is needed for this construction.

The actual dimensionless initial gap is at most
\[
A_{\rm actual}\le K_0(1+91Mq).
\]
If \(A\ge400K_0q\), take
\[
M=\left\lfloor\frac{A}{200K_0q}\right\rfloor.
\]
Then \(A_{\rm actual}\le A\),
\(M\ge A/(400K_0q)\), and \cref{thm:normalized-lower} gives
\[
\T\ge cMn\ge
\frac{c}{400K_0B_0}\,A\sqrt n\,B.
\]
If \(A<400K_0q\), then
\begin{equation}\label{eq:small-A}
A\sqrt n\,B<400K_0B_0\,n.
\end{equation}
Thus the desired term is already covered, up to a universal constant,
by the independent hidden-component \(\Omega(n)\) instance.
This covers the small-gap case at the same universal accuracy threshold.

For \(B<B_0\), \(\sqrt n\,\barB\le B_0\sqrt n\), so the
smooth finite-sum baseline suffices. For every \(B\ge B_0\), including
the capped case, choose
\[
R=\min\{B/B_0,n^{1/4}\},\qquad q=\sqrt n/R,\qquad
F=\min\{\sqrt n B,n^{3/4}\}.
\]
Using a smaller diameter than the permitted \(D_Y\) is allowed.
One has \(n/q=\sqrt nR\ge F/B_0\). If \(A\ge400K_0q\),
the same choice of \(M\) gives a lower bound proportional to
\(AF/(400K_0B_0)\). If \(A<400K_0q\), then
\[
AF\le B_0 A n/q<400K_0B_0 n,
\]
so the additive lower bound again suffices.
This proves the capped coefficient in \eqref{eq:new-lower}.
The finitely many \(n\) below the fixed threshold needed for
\(q\ge16\) and \(k_0=\Omega(n)\) are absorbed by the same baseline:
\(n^{3/4}/\sqrt n=n^{1/4}\) is bounded on that finite range.

Finally, for \(B\le n^{1/6}\),
\[
\barB^4\le \sqrt n\,\barB,\qquad
\min\{\sqrt n\,\barB,n^{3/4}\}=\sqrt n\,\barB.
\]
Combining \eqref{eq:new-lower} with \eqref{eq:general-upper},
and using \(A\ge A_0\) to replace \(1+A\) by a constant multiple of \(A\),
proves the first line of \eqref{global:eq:piecewise}.

\section{A Serial Branch Construction for the High-Diameter Lower Bound}
\label{app:branch}\label{hd:sec:result}
We first prove a matrix-information lower bound and then realize it
with a fixed smooth finite sum. All constants and profiles in this
appendix are independent of those in \cref{app:cone}. The scalar
endpoint coupling is motivated by Pan et al.~\cite{pzl}; its vector
extension and every analytic property used below are proved here.

\begin{theorem}[High-diameter bounds]\label{hd:thm:main}
There are numerical constants $A_0,B_0,c,C>0$ with the following properties.
The upper bound \eqref{eq:accelerated-main} holds for all parameters.
It holds under AS by \cref{hd:sec:upper}.
If $A\ge A_0$ and $B\ge B_0 n^{1/4}$, then
\begin{equation}\label{hd:eq:matching}
 c n^{3/4}AB\le\T\le C n^{3/4}AB\log(2+n+A+B).
\end{equation}
Thus the high-diameter region is matching up to a logarithm. Neither the lower bound nor this statement imposes a span, support, norm, or output restriction on algorithms. The lower-bound dimension is finite but can grow exponentially in $M(N-1)$. This high-diameter theorem alone makes no matching assertion for the complementary parameter region; the all-regime theorem also uses the cone and replication arguments.
\end{theorem}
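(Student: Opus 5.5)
The upper half of \eqref{hd:eq:matching} needs no new work. \Cref{hd:sec:upper} proves \eqref{eq:accelerated-main} under AS, and IS($L$) implies AS($L$), so $\T\le\Tas\le C(1+A)(n+n^{3/4}B)\ell_{n,A,B}$. For $A\ge A_0\ge1$ we have $1+A\le2A$. For $B\ge B_0n^{1/4}$ with $B_0\ge1$ we have $n\le n^{3/4}B$. Together these give the stated right-hand side. The substance is therefore the lower bound, which I would build in three layers following the sketch in \cref{sec:lower-main}: a matrix-information lower bound, a smooth realization with exact simulation, and a physical scaling.

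\textbf{Step 1 (information bound).} I would fix $M$ stages, each a serial chain of $N-1$ orthogonal-branch edges. Each edge consists of $n$ bounded linear maps with mutually orthogonal images, hidden by independent Haar rotations of the vertex frames, so there are $H=M(N-1)$ edges in total. I would work with a strengthened oracle. In each round it may act on every edge in both directions, with separately chosen indices, and all inputs of a round are fixed before any reply is returned. It also releases a source vector once a fixed fraction of its coordinates is reachable. The goal is the mass bound \eqref{eq:mass-main}, proved by induction on rounds. The binomial term $\binom Kt n^{-t}$ comes from the fact that advancing one depth level requires hitting a fresh orthogonal image, which happens with probability $1/n$ per round. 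Each release multiplies existing reachable mass by at most $\rho^{-1}$, and cached pre-release probes are kept in the accounting. Arbitrary dense inputs would be handled by lazy rotation: each new input direction outside the revealed span is represented by an independent Gaussian, and a large finite multiplicity (dimension) makes accidental terminal correlation negligible by a union bound, as in \eqref{eq:cap-bound}. Comparing the release cost with the propagation deficit then shows that with $K<cnH$ rounds the terminal depth $H$ is unreachable with high probability, including for the unqueried output.

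\textbf{Step 2 (realization).} Each stage would be embedded in a quadratic dual module, with nonlinear outer gates that are exactly flat (value and first derivative zero) on hidden directions. This flatness lets a true component query be simulated by a single strengthened-oracle round, exactly as in the ideal-oracle coupling of \cref{app:cone}. I would then prove four properties.
\begin{enumerate}
\item Individual smoothness holds with a universal $L_0$. This follows because each edge map is individually bounded and the blocks are disjoint.
\item The initial gap is $O(M)$.
\item Any point with a small Moreau gradient must have reached the terminal depth of every stage. The argument mirrors \cref{lem:binary,lem:all-sources,cor:moreau-source}: a single-frontier argument for the nonlinear outer assembly bounds the total connector norm independently of $M$.
\item The dual optimizer lies in a ball of radius $O(n^{1/4}N)$. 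I would prove coercivity of the eliminated endpoint energy for the constrained dual value first, and only then show that the unconstrained optimizer lies inside the ball. This ordering avoids a $\sqrt M$ loss.
\end{enumerate}

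\textbf{Step 3 (scaling).} Rescale as in \cref{sec:scaling} with $\sigma,\alpha$ chosen so that smoothness equals $L$, the dual diameter is at most $D_Y$, and the gradient threshold equals $\eps$. Taking $M\asymp A$ and $N\asymp B/n^{1/4}$, with $N\ge2$ ensured by $B\ge B_0n^{1/4}$, gives $\T\ge c\,nM(N-1)\asymp An^{3/4}B$, which is \eqref{eq:scaling-main}. I expect the main obstacle to be Step 2(iv) together with the frontier argument: keeping the dual radius at $O(n^{1/4}N)$ uniformly in $M$ while the stationarity certificate still forces full propagation. I would attack it through the eliminated endpoint energy of a single stage, proving a uniform coercivity constant before assembling the stages.
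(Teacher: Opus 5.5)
Your architecture is the paper's. The upper bound is absorbed exactly as you describe. The lower bound runs through the same pieces: an enlarged branch oracle with source releases, the pathwise mass bound \eqref{eq:mass-main}, lazy rotations, exactly flat gates, two-port quadratic modules whose coercivity is proved before dual interiority, a single frontier, and $M\asymp A$, $N\asymp B/n^{1/4}$.

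The step that would not go through as written is your handling of arbitrary dense inputs by a union bound in the style of \eqref{eq:cap-bound}. A cap estimate plus a union bound controls the correlation of each individual history-measurable vector with a hidden direction, at a dimension only logarithmic in the number of tests. The argument needs a different quantity. Conditional on the augmented history, $u_t=O_t(\xi_t\otimes e)$ has a known part. Its norm is that of the projection of $\xi_t\otimes e$ onto the \emph{entire} exposed formal subspace at depth $t$. That subspace contains the Gaussian inverse images of all marked vectors, together with everything later forward and transpose actions have moved from them to other depths. Later queries and the terminal output can be history-dependent combinations of these vectors. Independent random directions in a $D$-dimensional factor each have correlation of order $D^{-1/2}$ with a fixed unit vector. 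Yet $m$ of them span a subspace capturing squared mass about $m/D$, which is not small at the union-bound dimension. Both the terminal correlation bound and the requirement $|t_j|<\delta_g$ at unreleased sites depend on this span-level quantity, and your Step~2 simulation rests on the latter.

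The missing idea is the tensor-structured support bound of \cref{hd:lem:support}.
\begin{enumerate}
\item Let $V\subseteq\R^D$ be the span of the last-factor slices of \emph{all} Gaussian slots, used or not.
\item The nuisance spaces $\R^{n^t}\otimes V$ are mapped into one another by every $J_{t,i}$ and $J_{t,i}^{\mathsf T}$. So Gaussian contamination can never be converted into source mass, and every exposed formal vector lies in $(E_t\otimes\Span\{e\})+(\R^{n^t}\otimes V)$, with $E_t$ spanned by the reachable words.
\item Cauchy--Schwarz gives $|\ip{u_t}{z}|^2\le p_t+\norm{P_Ve}^2$ for every physical $z$ with $\norm z\le1$ whose formal coordinate lies in that region.
\item Rotational invariance gives $\E\norm{P_Ve}^2=m_G/D$. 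Hence $D$ must be linear in the total slice count $m_G=(bK+b_{\rm out})\sum_tn^t$, and Markov's inequality replaces the union bound.
\end{enumerate}

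The single event $\{\norm{P_Ve}^2\le\rho\}$ drives both the serial terminal bound and the transcript equality. After release closure, an unreleased macro site has $p_{jm}<\rho$, hence $|t_j|<\delta_g$ and exactly flat first jets. The prefix property of releases, which follows from $p_0\ge\cdots\ge p_H$, is what handles the non-flat factor $e_s(t_{j-1})$. So the simulation is not a copy of the cone's censored ideal oracle, whose good event consists of per-query cap tests.

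Finally, the binomial term in \eqref{eq:mass-main} is pathwise counting, not a probability. Indices are chosen adversarially, and a round with one index per edge adds at most $|\Lambda_{t-1}|$ words, i.e.\ mass $p_{t-1}/n$, to layer $t$.
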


\subsection{Orthogonal branches and a source-release oracle}
Fix integers $n,H\ge1$. For a multiplicity $D$, define
\[
 \calH_t=(\R^n)^{\otimes t}\otimes\R^D,\quad d_t=n^tD,
 \quad 0\le t\le H.
\]
Let $e\in\R^D$ be a fixed unit vector and $s=n^{-1/2}(1,\ldots,1)$. Write
\[
 \xi_t=s^{\otimes t},\qquad
 J_{t,i}a=e_i\otimes a,\qquad J_{t,s}=n^{-1/2}\sum_iJ_{t,i}.
\]
Then $J_{t,i}^{\mathsf T}J_{t,j}=\mathbf1_{i=j}I$ and $J_{t,s}$ is an isometry. Sample independent Haar rotations $O_t\in O(d_t)$ for $t\ge1$ and a Haar rotation $O_0$ fixing $e$. All are sampled before interaction. Set
\begin{equation}\label{hd:eq:maps}
 A_{t,i}=O_{t+1}J_{t,i}O_t^{\mathsf T},\quad
 U_t=O_{t+1}J_{t,s}O_t^{\mathsf T},\quad
 u_t=O_t(\xi_t\otimes e).
\end{equation}
Thus $u_0=e$, $U_tu_t=u_{t+1}$, and
\[
 \norm{A_{t,i}}_\op=1,\qquad
 n^{-1}\sum_iA_{t,i}=n^{-1/2}U_t.
\]

\subsubsection{The matrix part of a round}
At each round $k$, the algorithm chooses on every edge arbitrary vectors $a_t,b_t$ and an index $i_{t,k}\in[n]$. All inputs precede all replies. The oracle returns
\begin{equation}\label{hd:eq:matrix-oracle}
 \bigl(A_{t,i_{t,k}}a_t,\ A_{t,i_{t,k}}^{\mathsf T}b_t\bigr)_{0\le t<H}.
\end{equation}
Indices may differ across edges. Query norms, histories, and computations are unrestricted.

\subsubsection{A rule for releasing source vectors}
Fix a threshold $a\in(0,1)$ and put $\rho=a^2/2$. Fix a public set of source sites $\mathscr S\subseteq\{1,\ldots,H\}$, of size at most $M\le H$. A source at a site can be released once.

We maintain word sets $\Lambda_t\subseteq[n]^t$, initialized by $\Lambda_0=\{\varnothing\}$ and $\Lambda_t=\varnothing$ for $t>0$. In a matrix round update them simultaneously by
\begin{equation}\label{hd:eq:forward-update}
 \widetilde\Lambda_t=\Lambda_t\cup
 \{(i_{t-1,k},w):w\in\Lambda_{t-1}\}\quad(t\ge1),
 \qquad \widetilde\Lambda_0=\{\varnothing\}.
\end{equation}
Let $p_t=|\Lambda_t|/n^t$. After the update, a source $u_j$ may be released when $j\in\mathscr S$ is unreleased and $p_j\ge\rho$. Release reveals the entire physical vector $u_j$ and updates the \emph{accounting sets} by
\begin{equation}\label{hd:eq:reset}
 \Lambda_t\leftarrow[n]^t\quad(0\le t\le j).
\end{equation}
It does not reveal bases or rotations at earlier layers. Repeat eligible releases in any order. Giving all eligible sources immediately is allowed. The accounting sets, release flags, and indices may be public.

This is a stateful, enlarged matrix oracle, not itself an oracle for an arbitrary finite sum. It is useful because an exact-flat nonlinear component query can be simulated by it under the coupling in \cref{hd:sec:flat}. The release rule depends only on the index/accounting history and previously released vectors, never on an unexposed part of a rotation.

\subsection{A release costs a bounded factor of reachable mass}
\begin{lemma}[Deletion closure and one-reset amplification]\label{hd:lem:reset}
At every time, if $(i,w)\in\Lambda_{t+1}$ then $w\in\Lambda_t$. Consequently $p_0\ge p_1\ge\cdots\ge p_H$. If a legal release takes place, every coordinate of the mass vector after the release is at most $\rho^{-1}$ times its value before the release.
\end{lemma}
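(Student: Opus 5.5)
The proof is an induction over the sequence of operations that modify the accounting sets. There are three kinds: initialization, a matrix-round update \eqref{hd:eq:forward-update}, and a release reset \eqref{hd:eq:reset}. I will show that deletion closure, namely $(i,w)\in\Lambda_{t+1}\Rightarrow w\in\Lambda_t$, holds initially and is preserved by each operation. Monotonicity of the masses and the amplification bound then follow by counting.

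\emph{Closure.} Initially $\Lambda_0=\{\varnothing\}$ and every other $\Lambda_t$ is empty, so closure holds trivially.

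For a matrix round, take $(i,w)\in\widetilde\Lambda_{t+1}$ with $t\ge0$. There are two cases.
\begin{itemize}
\item If $(i,w)\in\Lambda_{t+1}$, then by the inductive hypothesis $w\in\Lambda_t\subseteq\widetilde\Lambda_t$.
\item Otherwise $i=i_{t,k}$ and $w\in\Lambda_t\subseteq\widetilde\Lambda_t$.
\end{itemize}
In both cases $w\in\widetilde\Lambda_t$. The update is simultaneous and monotone, since $\Lambda_t\subseteq\widetilde\Lambda_t$, which is exactly what the argument uses.

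For a reset at site $j$, the levels $t+1\le j$ become full, so closure is trivial there. At level $t+1=j+1$ the set is unchanged, but $\Lambda_j$ is now $[n]^j$, which contains every prefix. Levels above $j+1$ are untouched.

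\emph{Monotonicity.} Closure says that the map $(i,w)\mapsto w$ sends $\Lambda_{t+1}$ into $\Lambda_t$, and each fiber has at most $n$ elements. Hence $|\Lambda_{t+1}|\le n|\Lambda_t|$, that is, $p_{t+1}\le p_t$.

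\emph{Amplification.} Suppose a legal release occurs at site $j$, so $p_j\ge\rho$ before the reset. By monotonicity, $p_t\ge p_j\ge\rho$ for every $t\le j$. After the reset these coordinates equal $1\le\rho^{-1}p_t$. Coordinates $t>j$ are unchanged, and since $\rho^{-1}\ge1$ they also satisfy the bound.

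When several eligible releases happen in sequence, each one is checked against the state immediately before it. Closure has already been re-established after the previous reset, so the same argument applies to each release in turn.

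The only delicate point is making sure that closure survives the reset at the boundary level $j+1$. This is where it matters that the reset fills all levels $0,\dots,j$ rather than only level $j$. Everything else is routine bookkeeping.
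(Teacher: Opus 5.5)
Your proof is correct and follows the paper's argument step for step: deletion closure is checked initially, under the simultaneous forward update, and under a reset (including the interface with level $j+1$); the fiber count then gives $|\Lambda_{t+1}|\le n|\Lambda_t|$, and $p_t\ge p_j\ge\rho$ for $t\le j$ gives the $\rho^{-1}$ factor. One small correction to your closing remark: at the interface $j+1$ only the fullness of $\Lambda_j$ is used, and filling all levels $0,\dots,j$ is what keeps closure intact at the levels $1,\dots,j$ themselves.
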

\begin{proof}
Deletion closure holds initially. In \eqref{hd:eq:forward-update}, an inserted word has a suffix in the old lower set, hence also in the new lower set. Existing words retain their old suffixes. A reset fills all layers up to its site and therefore preserves closure, including across the interface to the next layer.

There are at most $n$ words for each suffix, so $|\Lambda_{t+1}|\le n|\Lambda_t|$. If site $j$ is released, then $p_t\ge p_j\ge\rho$ for $t\le j$. Replacing these entries by one multiplies each by at most $\rho^{-1}$. Other entries do not change.
\end{proof}

\begin{theorem}[Adaptive chronological accounting with source releases]\label{hd:thm:accounting}
After $K$ matrix rounds and $r_K$ source releases,
\begin{equation}\label{hd:eq:mass-bound}
 p_t(K)\le\min\left\{1,\rho^{-r_K}\frac{\binom Kt}{n^t}\right\},
 \qquad 0\le t\le H.
\end{equation}
The inequality holds pathwise for every adaptive index schedule and every allowed choice of release times. A binomial coefficient is zero when $t>K$.
\end{theorem}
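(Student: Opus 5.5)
We prove the bound by induction on the sequence of events (matrix rounds and releases), processed in chronological order. The induction hypothesis is a sharper, stage-indexed inequality. Suppose that after $K$ rounds exactly $r$ releases have occurred. We claim that for every $t$,
\[
 p_t\le \rho^{-r}\,\frac{\binom Kt}{n^t},
\]
with the convention that $\binom K0=1$ and $p_0=1$. The trivial bound $p_t\le1$ always holds, because $\Lambda_t\subseteq[n]^t$. Taking the minimum of the two bounds then gives \eqref{hd:eq:mass-bound}.

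\textbf{Matrix rounds.} Fix a round with chosen indices $i_{t-1,k}$. By \eqref{hd:eq:forward-update}, the words added to layer $t$ are obtained by prefixing the single index $i_{t-1,k}$ to words in the old layer $t-1$. Hence
\[
 |\widetilde\Lambda_t|\le|\Lambda_t|+|\Lambda_{t-1}|,\qquad
 \widetilde p_t\le p_t+\frac{p_{t-1}}n .
\]
All layers are updated simultaneously, and this is exactly why the old $\Lambda_{t-1}$ appears on the right. Substitute the hypothesis at $K$, with a common factor $\rho^{-r}$:
\[
 \widetilde p_t\le\rho^{-r}\,\frac{\binom Kt+\binom K{t-1}}{n^t}
 =\rho^{-r}\,\frac{\binom{K+1}t}{n^t}.
\]
This is Pascal's rule, and it gives the claim at $K+1$. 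The case $t=0$ is trivial since $\widetilde\Lambda_0=\{\varnothing\}$. When $t>K$, the induction shows $\Lambda_t=\varnothing$, which is consistent with the chronological propagation of reachable mass. The bound is pathwise: it does not depend on how the indices are chosen, so adaptivity is harmless.

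\textbf{Releases.} By \cref{hd:lem:reset}, a legal release multiplies every coordinate $p_t$ by at most $\rho^{-1}$. In particular, the coordinates reset to $1$ satisfy $1\le\rho^{-1}p_t$, because eligibility forces $p_t\ge p_j\ge\rho$ for every $t\le j$. The hypothesis therefore passes from $(K,r)$ to $(K,r+1)$. Several releases may occur between rounds, in any order; applying this step once per release handles them.

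\textbf{The main obstacle.} The delicate point is that a reset inserts words that were never produced by propagation. The amplification factor must then be charged against the existing bound rather than recomputed. We must also make sure that later Pascal steps still apply to the reset sets. Both issues are resolved because the recursion $\widetilde p_t\le p_t+p_{t-1}/n$ only uses cardinalities, and it holds whatever the sets $\Lambda_t$ contain. Deletion closure is not needed for the recursion; it is used only inside \cref{hd:lem:reset} to get $p_t\ge p_j$ for $t\le j$.
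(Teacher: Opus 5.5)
Your proposal is correct and is essentially the paper's own proof: you induct over rounds using the simultaneous-update recursion $\widetilde p_t\le p_t+p_{t-1}/n$ together with Pascal's identity, charge a common factor $\rho^{-1}$ per release via \cref{hd:lem:reset}, and truncate at one at the end. Your observation that the recursion uses only cardinalities, with deletion closure entering solely through the reset lemma, matches the paper's remark that the argument is pure counting and holds pathwise.
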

\begin{proof}
Between releases the simultaneous update gives
\[
 \widetilde p_t\le p_t+n^{-1}p_{t-1}.
\]
A release increases the whole vector by at most a common factor $\rho^{-1}$, by Lemma~\ref{hd:lem:reset}. Induct on rounds and insert the release operations after each round. The induction step for a round is Pascal's identity,
\[
 \frac{\binom{k-1}t}{n^t}+\frac1n
 \frac{\binom{k-1}{t-1}}{n^{t-1}}=\frac{\binom kt}{n^t}.
\]
At $t=0$ the upper bound is at least one. Multiplying by $\rho^{-1}$ for every release gives \eqref{hd:eq:mass-bound}; finally truncate at one. No independence is used in this counting argument.
\end{proof}

\begin{remark}
A source can inject all $n^j$ source coordinates at layer $j$, so the original no-release support theorem cannot simply be quoted unchanged. The factor $\rho^{-r_K}$ is precisely the new charge. Since only $M$ sources are released while a path has $H$ edges, this factor can be dominated by the binomial propagation deficit. Caching before a source is released has not been discarded.
\end{remark}

\subsection{Exact adaptive product--Haar coupling, including source reveals}

We use the following finite-dimensional disintegration fact repeatedly.  It
is stated explicitly because the same rotation is shared by several primal
and dual blocks at a macro endpoint.

\begin{lemma}[Conditional Haar extension of a partial isometry]
\label{hd:lem:haar-extension}
Let $E,F$ be real Euclidean spaces of the same finite dimension.  Let
$K\subseteq E$, $L\subseteq F$, and let $V:K\to L$ be an isometry onto
$L$.  Write $\mathfrak E(V)$ for the orthogonal maps $O:E\to F$ satisfying
$O|_K=V$, equipped with the conditional Haar law obtained by choosing an
independent Haar isometry $K^\perp\to L^\perp$.

Suppose that, conditional on a sigma-field $\mathcal F$, $O$ has law
$\mathfrak E(V)$ and that $K,L,V$ are $\mathcal F$-measurable.
\begin{enumerate}[label=(\roman*),itemsep=3pt]
\item Let $x\in F$ be $\mathcal F$-measurable.  Put
$x_0=P_Lx$, $x_\perp=x-x_0$.  If $x_\perp\ne0$, let
$g\sim N(0,I_E)$ be independent of $\mathcal F$ and set
\[
 h=\frac{(I-P_K)g}{\|(I-P_K)g\|},\qquad
 K'=K\oplus\operatorname{span}\{h\},\qquad
 L'=L\oplus\operatorname{span}\{x_\perp\},
\]
with
\[
 V'|_K=V,\qquad V'h=x_\perp/\|x_\perp\|.
\]
Then
\[
 O^{\mathsf T}x=V^{\mathsf T}x_0+\|x_\perp\|h,
\]
and, conditional on $\mathcal F$ and this revealed inverse image, $O$ has
law $\mathfrak E(V')$.  If $x_\perp=0$, the same statement holds with
$(K',L',V')=(K,L,V)$ and $O^{\mathsf T}x=V^{\mathsf T}x$.

\item Let $y\in E$ be $\mathcal F$-measurable.  Put
$y_0=P_Ky$, $y_\perp=y-y_0$.  If $y_\perp\ne0$, draw a unit vector
$z$ uniformly from the sphere of $L^\perp$, independently of
$\mathcal F$, and extend $V$ by
\[
 V'(y_\perp/\|y_\perp\|)=z.
\]
Then
\[
 Oy=Vy_0+\|y_\perp\|z,
\]
and, conditional on the revealed forward image, $O$ has law
$\mathfrak E(V')$.  The zero-residual case again leaves $V$ unchanged.

\item The preceding statements remain valid when the choice between a
forward reveal, an inverse reveal, or no reveal, and the vector to be
revealed, are stopping-time decisions measurable with respect to the
current augmented history.  For a finite product of independent rotations,
performing one such reveal at a time preserves conditional independence of
the unexposed completions.
\end{enumerate}
All assertions are understood outside the null event on which a projected
Gaussian in a nonzero complement vanishes.
\end{lemma}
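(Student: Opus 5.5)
The plan is to reduce everything to a concrete description of the conditional law $\mathfrak E(V)$ and then verify each reveal by a change of variables. Fix orthonormal bases and write any $O\in\mathfrak E(V)$ as $O=V P_K+W(I-P_K)$, where $W:K^\perp\to L^\perp$ is a Haar-distributed isometry independent of $\mathcal F$. Conditional on $\mathcal F$, the randomness is exactly the Haar law on the orthogonal group of the $m$-dimensional space $K^\perp$, with $m=\dim E-\dim K$. Every claim then becomes a statement about a Haar isometry $W$ between two $m$-dimensional spaces with one vector exposed. I will use two standard facts repeatedly. First, for a fixed unit vector $\omega$, the image $W\omega$ and the preimage $W^{\mathsf T}\omega$ are uniform on the appropriate unit spheres. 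Second, conditional on $W\omega=\omega'$, the restriction of $W$ to $\omega^\perp$ is a Haar isometry onto $\omega'^\perp$. Both follow from invariance of Haar measure under left and right multiplication, together with disintegration along the fibration $O(m)\to S^{m-1}$.

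For (i), decompose $x=x_0+x_\perp$ with $x_0\in L$. Since $O$ maps $K$ onto $L$ and $K^\perp$ onto $L^\perp$, we get $O^{\mathsf T}x_0=V^{\mathsf T}x_0$ and $O^{\mathsf T}x_\perp=W^{\mathsf T}x_\perp\in K^\perp$. The vector $W^{\mathsf T}(x_\perp/\|x_\perp\|)$ is uniform on the unit sphere of $K^\perp$. This is the same law as the normalized projected Gaussian $h$, so the two can be coupled to be equal. The first fact gives the formula for $O^{\mathsf T}x$, and the second gives that, conditionally, $W$ restricted to $h^\perp\cap K^\perp$ is Haar onto $x_\perp^\perp\cap L^\perp$. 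That is exactly the law $\mathfrak E(V')$ with the stated $K',L',V'$. Part (ii) is the mirror argument applied to $W$ instead of $W^{\mathsf T}$, with $z$ uniform on the sphere of $L^\perp$. In both parts the zero-residual case is immediate, because nothing about $W$ is exposed. The null event where $(I-P_K)g=0$ is excluded as stated.

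For (iii), proceed by induction over the reveal steps. The augmented history after each step is generated by $\mathcal F$ together with the revealed vectors. The decision of which reveal to make, and the vector to reveal, is measurable with respect to the current history, so conditionally on that history the choice is fixed. Parts (i)–(ii) then apply verbatim, and a monotone class argument over the countably many branch outcomes is not needed beyond this conditioning. For a finite product of independent rotations $O_0,\dots,O_H$, the inductive hypothesis is that, conditional on the current history, the completions $W_t$ are independent with laws $\mathfrak E(V_t)$. A reveal touches one $O_t$ only and uses fresh independent randomness. Conditioning on it therefore updates that factor to $\mathfrak E(V_t')$ and leaves the joint product structure of the others unchanged, since their conditional density is unaffected.

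The main obstacle is the conditional-independence bookkeeping in (iii). One has to check that stopping-time choices do not bias the law, and that an independent auxiliary Gaussian can legitimately stand in for the exposed direction. I would handle this by constructing $O$ explicitly from a sequence of i.i.d. auxiliary variables: Gaussians and uniform sphere points, one per potential reveal. I would then verify by induction on the step count that the constructed $O$ has the product-Haar law for every adaptive strategy. This turns the claim into a pure distributional identity at each step.
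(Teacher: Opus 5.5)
Your proposal is correct and takes essentially the same route as the paper. You reduce $\mathfrak E(V)$ to a Haar element of $O(r)$ acting between the complements $K^\perp$ and $L^\perp$, and you observe that the revealed preimage or image is uniform on the unit sphere, so it can be coupled with the normalized projected Gaussian or with $z$. You then use that the conditional completion is Haar on the orthogonal complements, and you induct over stopping-time reveals one factor at a time. The only difference is that the paper proves the conditional-Haar disintegration explicitly: it draws $h$ uniform, then an independent Haar $R:h^\perp\to u^\perp$, and shows the resulting law on $O(r)$ is right-invariant and hence Haar. You instead cite this fact as standard via the fibration $O(m)\to S^{m-1}$; the explicit construction avoids the version-selection subtleties of an abstract disintegration.
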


\begin{proof}
Choose orthonormal identifications of $K^\perp$ and $L^\perp$ with
$\mathbb R^r$.  Every $O\in\mathfrak E(V)$ is uniquely $V\oplus Q$, where
$Q$ is Haar on $O(r)$.  If $r=0$, both assertions reduce to their
zero-residual cases and are immediate.  Assume $r\ge1$ and fix a unit
$u\in L^\perp$.  Consider the following
two-stage construction: draw $h$ uniformly from the unit sphere of
$K^\perp$; conditional on $h$, draw a Haar isometry
$R:h^\perp\to u^\perp$ and define $Qh=u$, $Q|_{h^\perp}=R$.  For any fixed
orthogonal map $B$ of $K^\perp$, the vector $B^{\mathsf T}h$ is again
uniform.  Conditional on $B^{\mathsf T}h$, the map $QB$ sends
$B^{\mathsf T}h$ to $u$, while its restriction to
$(B^{\mathsf T}h)^\perp$ is obtained from $R$ by precomposition with the
fixed isometry $B$.  That restriction is therefore Haar.  Hence $QB$ has the
same law as $Q$ for every $B$.  The constructed probability law on the
compact group $O(r)$ is right invariant, and thus is normalized Haar
measure.  Since the construction also satisfies $Q^{\mathsf T}u=h$
deterministically, its second-stage kernel is a regular conditional law of a
Haar $Q$ given $Q^{\mathsf T}u=h$.  (When $r=1$, the complementary fiber is
a singleton.)  This proves (i), since a normalized projected standard
Gaussian is uniform on the sphere.  Applying the same argument to
$Q^{\mathsf T}$ proves (ii).

For (iii), condition on the current augmented history and apply (i) or
(ii).  The next decision is then fixed, and the updated conditional law is
again a Haar completion of the enlarged partial isometry.  Induction proves
the stopping-time statement.  In a product law, a reveal involving one
factor changes only that factor's partial isometry; the other factors retain
their independent conditional Haar laws.  Sequentially revealing several
factors therefore preserves the asserted product structure.
\end{proof}

Fix integers $b,b_{\rm out}\ge1$.  A \emph{$b$-channel round} is a matrix
round together with, at each vertex $t$, an ordered list of at most $b$
physical vectors measurable before any reply in that round.  The forward
input on edge $t$ and the transpose input on edge $t-1$ are included in this
list whenever present.  Marking the additional vectors returns no
information to the interacting algorithm; it is only an accounting device.
At termination, at most $b_{\rm out}$ further physical vectors per vertex
may be marked.

\begin{lemma}[Lazy product--Haar rotations with adaptive channels]
\label{hd:lem:lazy}
Fix a deterministic adaptive strategy using at most $K$ $b$-channel rounds,
the release rule above, and at most $b_{\rm out}$ marked terminal vectors
per vertex.  There is a coupling with independent standard Gaussian vectors
\[
 g_{t,k,r}\sim N(0,I_{d_t})
 \quad(0\le t\le H,\ 1\le k\le K,\ 1\le r\le b),
\]
and
\[
 g^{\rm out}_{t,r}\sim N(0,I_{d_t})
 \quad(0\le t\le H,\ 1\le r\le b_{\rm out}),
\]
with the following properties.
\begin{enumerate}[label=(\roman*),itemsep=3pt]
\item The completed rotations have exactly the independent product--Haar
law in \eqref{hd:eq:maps}; at the root the completion is Haar in the
subgroup fixing $e$.
\item Whenever a marked physical vector $x\in\cal H_t$ is processed in its
assigned slot, its formal coordinate $O_t^{\mathsf T}x$ belongs to the span
of the previously exposed formal vectors at layer $t$ and the Gaussian
assigned to that slot.
\item Every matrix reply is exactly
\[
 O_{t+1}J_{t,i}O_t^{\mathsf T}a=A_{t,i}a,
 \qquad
 O_tJ_{t,i}^{\mathsf T}O_{t+1}^{\mathsf T}b=A_{t,i}^{\mathsf T}b.
\]
A released source is exactly $O_t(\xi_t\otimes e)=u_t$ and uses no formal
Gaussian slot.
\item All marked vectors in a round are processed before any matrix reply.
The terminal marked vectors are processed after the last release closure.
Unused Gaussian slots remain independent and may be included in the later
reservoir count.
\end{enumerate}
\end{lemma}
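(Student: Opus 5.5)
The coupling will be built by lazy (principle-of-deferred-decisions) sampling. Each rotation $O_t$ is carried as a partial isometry $V_t:K_t\to L_t$ between a formal space $K_t\subseteq\calH_t$ and a physical space $L_t\subseteq\calH_t$, and its unexposed completion is kept conditionally Haar. Initially $K_t=L_t=\{0\}$ for $t\ge1$; at the root we take $K_0=L_0=\Span\{e\}$ with $V_0e=e$, so that the completion is Haar in the stabilizer of $e$. We then move through the interaction in chronological order and apply \cref{hd:lem:haar-extension} once per event. Part (iii) of that lemma, applied inductively with respect to the augmented history (which records all formal coordinates, Gaussians used and release flags), will give (i): after the last step we draw the remaining completions independently and Haar, and this produces exactly the product law \eqref{hd:eq:maps}.

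Within a round the order is as follows. All $b$-channel vectors are measurable before any reply, so we first process the marked vectors at each vertex $t$ in their assigned slots $r$. For a physical vector $x\in\calH_t$ we apply the inverse reveal of \cref{hd:lem:haar-extension}(i), with the independent Gaussian $g_{t,k,r}$ playing the role of $g$. This yields $O_t^{\mathsf T}x=V_t^{\mathsf T}P_{L_t}x+\norm{x_\perp}h$ with $h\propto(I-P_{K_t})g_{t,k,r}$, which is (ii). Since forward inputs $a_t$ at vertex $t$ and transpose inputs $b_t$ at vertex $t+1$ are among the marked vectors, their formal coordinates $O_t^{\mathsf T}a_t$ and $O_{t+1}^{\mathsf T}b_t$ are already determined. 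We then apply the deterministic maps $J_{t,i}$ and $J_{t,i}^{\mathsf T}$ to these coordinates. The results lie in $\calH_{t+1}$ and $\calH_t$ respectively, and we push them forward by the forward reveal of \cref{hd:lem:haar-extension}(ii), applied to $O_{t+1}$ and to $O_t$. These forward reveals use uniform sphere draws that are not Gaussian slots, so they are drawn from a separate auxiliary independent source. Their formal part lies in the span of previously exposed formal vectors, so they create no new formal directions. The replies are then exactly $O_{t+1}J_{t,i}O_t^{\mathsf T}a_t=A_{t,i}a_t$ and its transpose analogue, which gives (iii) for matrix replies and the first half of (iv).

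Source releases come next. When site $j$ is released we perform a forward reveal of the fixed formal vector $\xi_j\otimes e$ under $O_j$; this uses no Gaussian slot, and it gives $u_j$ exactly. The release decision depends only on index and accounting history and on previously released vectors, so it is a stopping-time decision in the augmented filtration, and \cref{hd:lem:haar-extension}(iii) applies. After the final release closure we process the terminal marked vectors with the Gaussians $g^{\rm out}_{t,r}$, exactly as before. Unused slots were never touched, so they remain independent of everything and can be counted in the later reservoir.

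The main obstacle is making the independence claims rigorous. The Gaussians must be genuinely independent of each other and of the completed rotations' law, while the slot assignment and vector choice are adaptive. The plan is to fix, before interaction, the full array of independent Gaussians together with an independent auxiliary array of uniform variables for the forward reveals. We then define the rotations as deterministic functions of these arrays and the strategy, and check by induction over the finitely many events that the conditional law of the current completion given the augmented history is $\mathfrak E(V_t)$. That induction, together with the product structure from \cref{hd:lem:haar-extension}(iii) (at most one factor changes per step), identifies the joint law. The null event where a projected Gaussian vanishes is excluded throughout.
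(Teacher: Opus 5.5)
Your proposal is correct and follows the paper's proof essentially step for step: partial isometries $V_t:K_t\to L_t$ with the root fixing $e$, inverse reveals with the slot Gaussians giving (ii), forward reveals of $J_{t,i}O_t^{\mathsf T}a$, $J_{t,i}^{\mathsf T}O_{t+1}^{\mathsf T}b$ and $\xi_j\otimes e$, terminal inverse reveals, and an independent final completion under the same conditional-Haar induction invariant; your explicit auxiliary uniform source for the forward reveals only makes precise what the paper leaves implicit. One wording fix: a forward reveal does add a new formal direction at layer $t+1$, namely a deterministic $J$-image of exposed layer-$t$ vectors rather than a Gaussian one, so ``no new formal directions'' is inaccurate as written; this is harmless for (ii), because those vectors count among the previously exposed formal vectors.
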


\begin{proof}
At every vertex maintain subspaces $K_t\subseteq\cal H_t$ in formal
coordinates and $L_t\subseteq\cal H_t$ in physical coordinates, together
with an isometry $V_t:K_t\to L_t$.  Initially
$K_0=L_0=\operatorname{span}\{e\}$ and $V_0e=e$; for $t\ge1$ the three
objects are zero-dimensional.  The induction invariant is:
conditional on the entire augmented history, the unexposed completions of
all $O_t$ are independent and $O_t$ is a Haar completion of $V_t$.

Process the marked physical vectors in the public lexicographic order
$(t,r)$.  Apply Lemma~\ref{hd:lem:haar-extension}(i), using the Gaussian
assigned to the slot.  This exposes the exact inverse image and enlarges
$V_t$ only when the physical residual is nonzero.  The formula in that
lemma proves property (ii), including linearly dependent or repeated marked
vectors.

After all marked inputs have been processed, the formal coordinates of all
matrix-action inputs are known.  For a forward action, form the known formal
vector $J_{t,i}O_t^{\mathsf T}a$ and expose its image through $O_{t+1}$ by
Lemma~\ref{hd:lem:haar-extension}(ii).  For a transpose action do the same
with $J_{t,i}^{\mathsf T}O_{t+1}^{\mathsf T}b$ through $O_t$.  Process the
finitely many actions in a fixed order.  Sequential conditioning gives the
joint law of all replies, not merely their separate marginals, and proves
(iii).

After the matrix part of a round, apply the public accounting update and
repeat all eligible releases.  A release decision is measurable from the
augmented history and the public accounting sets.  Expose the deterministic
formal vector $\xi_j\otimes e$ forward through $O_j$ using
Lemma~\ref{hd:lem:haar-extension}(ii).  The stopping-time clause shows that
the invariant is preserved.  The same inverse-reveal operation handles the
terminal marked vectors.  Finally complete every $V_t$ by an independent
conditional Haar isometry.  The induction invariant then gives exactly the
static product--Haar marginal in \eqref{hd:eq:maps}; no rotation is chosen
in response to an unexposed future query.
\end{proof}

For the remainder of the information argument, use all the Gaussian slots
in Lemma~\ref{hd:lem:lazy}, including unused ones, and set
\begin{equation}\label{hd:eq:reservoir}
 \begin{aligned}
 S_H&=\sum_{t=0}^H n^t,
 &m_G&=(bK+b_{\rm out})S_H,\\
 V&=\Span\{\text{all last-factor slices of all formal Gaussians}\}
 \subseteq\mathbb R^D.
 \end{aligned}
\end{equation}
There are exactly $m_G$ independent standard Gaussian slices.  If
$D>m_G$, their span has rank $m_G$ almost surely and is rotationally
invariant, so
\begin{equation}\label{hd:eq:projection}
 \mathbb E\|P_Ve\|^2=m_G/D.
\end{equation}

\begin{lemma}[Support with releases and the correlation estimate]
\label{hd:lem:support}
At every atomic time after the corresponding accounting update, every
exposed formal vector at layer $t$ lies in
\[
 \mathcal R_t:=
 \bigl(E_t\otimes\Span\{e\}\bigr)+
 \bigl(\mathbb R^{n^t}\otimes V\bigr),
 \qquad E_t=\Span\{e_w:w\in\Lambda_t\}.
\]
This includes all marked query vectors, all matrix replies, all released
sources, and all marked terminal vectors.  Consequently, if a physical
vector $z$ satisfies $\|z\|\le1$ and
$O_t^{\mathsf T}z\in\mathcal R_t$, then
\begin{equation}\label{hd:eq:correlation}
 |\langle u_t,z\rangle|^2
 \le p_t+(1-p_t)\|P_Ve\|^2
 \le p_t+\|P_Ve\|^2.
\end{equation}
\end{lemma}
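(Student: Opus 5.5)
The plan is an induction over atomic times in the order of \cref{hd:lem:lazy}: marked inputs, then matrix replies, then the accounting update and releases. The invariant is that every exposed formal vector at layer $t$ lies in the current $\mathcal R_t$. Two facts make this work. First, the accounting sets only grow, so $\mathcal R_t$ is monotone in time and an old vector remains inside. Second, every Gaussian slot, used or not, already contributes its last-factor slices to $V$, so $\mathbb R^{n^t}\otimes V$ is a fixed subspace containing every formal Gaussian at every layer. At time zero the only exposed formal vector is $e\in\mathcal H_0$, and $E_0\otimes\Span\{e\}$ contains it because $\Lambda_0=\{\varnothing\}$.

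\emph{Marked vectors.} By \cref{hd:lem:lazy}(ii), the formal coordinate of a marked vector lies in the span of previously exposed formal vectors at layer $t$ together with the slot Gaussian $g_{t,k,r}$. The former lie in $\mathcal R_t$ by induction, and $g_{t,k,r}\in\mathbb R^{n^t}\otimes V$. The terminal slots are handled the same way.

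\emph{Matrix replies.} A forward reply on edge $t$ has formal coordinate $J_{t,i}O_t^{\mathsf T}a=e_i\otimes(O_t^{\mathsf T}a)$, with $i=i_{t,k}$. Since $O_t^{\mathsf T}a\in\mathcal R_t$, I check the two summands separately. The map $J_{t,i}$ sends $e_w\otimes e$ to $e_{(i,w)}\otimes e$, and $(i,w)\in\widetilde\Lambda_{t+1}$ by \eqref{hd:eq:forward-update}. It sends $\mathbb R^{n^t}\otimes V$ into $\mathbb R^{n^{t+1}}\otimes V$. Hence the reply lies in $\mathcal R_{t+1}$ after the update.

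A transpose reply uses $J_{t,i}^{\mathsf T}(e_{(j,w)}\otimes e)=\mathbf 1_{i=j}\,e_w\otimes e$. The input lies in the old $\mathcal R_{t+1}$, and by deletion closure (\cref{hd:lem:reset}) every $(j,w)\in\Lambda_{t+1}$ has $w\in\Lambda_t$. So the reply lands in $\mathcal R_t$, and the $V$-part again maps into $\mathbb R^{n^t}\otimes V$.

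Since all inputs of a round are marked before any reply, the formal inputs used in that round's replies are pre-update objects, and the update only enlarges the targets.

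\emph{Releases.} A released source $u_j$ has formal coordinate $\xi_j\otimes e\in\Span\{e_w:w\in[n]^j\}\otimes e$. The reset \eqref{hd:eq:reset} makes $E_j$ the whole space, so $\xi_j\otimes e\in\mathcal R_j$. Lower layers only enlarge.

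\emph{Correlation estimate.} Let $\hat z=O_t^{\mathsf T}z\in\mathcal R_t$, so that $\langle u_t,z\rangle=\langle\xi_t\otimes e,\hat z\rangle$. Write $P_E$ for the projection onto $E_t$ in $\mathbb R^{n^t}$ and decompose
\[
\xi_t\otimes e=(P_E\xi_t)\otimes e+\bigl((I-P_E)\xi_t\bigr)\otimes e.
\]
Here $\norm{P_E\xi_t}^2=|\Lambda_t|/n^t=p_t$, because $\xi_t$ is uniform over coordinates.

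The second piece is orthogonal to $E_t\otimes e$. Its inner product with an element $\sum_k c_k\otimes v_k$ of $\mathbb R^{n^t}\otimes V$ equals its inner product with $\bigl((I-P_E)\xi_t\bigr)\otimes P_Ve$. So only the projection of $\xi_t\otimes e$ onto the subspace spanned by $(P_E\xi_t)\otimes e$ and $((I-P_E)\xi_t)\otimes P_Ve$ matters. These two vectors are orthogonal, with squared norms $p_t$ and $(1-p_t)\norm{P_Ve}^2$. Therefore
\[
|\langle u_t,z\rangle|^2\le\norm{z}^2\bigl(p_t+(1-p_t)\norm{P_Ve}^2\bigr),
\]
which gives \eqref{hd:eq:correlation}. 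This projection computation is the step I expect to need the most care, since one must verify that the component along $P_Ve$ is not double-counted with the $E_t$ part. The orthogonality of $(I-P_E)\xi_t$ and $P_E\xi_t$ in the first tensor factor settles this.
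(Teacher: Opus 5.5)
Your proposal is correct and follows the paper's proof. It uses the same atomic-order induction: slot Gaussians land in $\mathbb R^{n^t}\otimes V$, forward actions prepend the selected index, transpose actions delete the first letter under deletion closure, and the reset precedes each release. It then finishes with the same projection-plus-Cauchy--Schwarz bound, where your first-factor split ($E_t\oplus E_t^\perp$) versus the paper's last-factor split ($V\oplus V^\perp$) is only cosmetic, since both identify the same projection $(P_{E_t}\xi_t)\otimes e+((I-P_{E_t})\xi_t)\otimes P_Ve$ of squared norm $p_t+(1-p_t)\|P_Ve\|^2$.
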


\begin{proof}
Proceed through the same atomic order as in
Lemma~\ref{hd:lem:lazy}.  An inverse reveal adds a vector in the span of the
old formal exposed space and one assigned Gaussian.  The latter belongs to
$\mathbb R^{n^t}\otimes V$, so it contributes only to the nuisance summand.
Both nuisance spaces and their orthogonal sums over words are invariant
under every $J_{t,i}$ and $J_{t,i}^{\mathsf T}$.

For the source part, a forward action prepends the selected index, exactly
as in \eqref{hd:eq:forward-update}.  A transpose action either annihilates a
source word or deletes its first letter; deletion closure from
Lemma~\ref{hd:lem:reset} places the resulting suffix in the lower-layer
set.  Before a source is exposed, the reset fills all word sets through its
site, so $\xi_j\otimes e\in E_j\otimes\Span\{e\}$.  These observations prove
$O_t^{\mathsf T}z\in\mathcal R_t$ for every exposed vector.  Enlarging an
accounting set can only enlarge $\mathcal R_t$, so the statement also covers
vectors exposed earlier in the same round.

Put $v=\|P_Ve\|^2$.  Orthogonally decompose the last factor as
$V\oplus V^\perp$.  The first summand of $\mathcal R_t$ adds, beyond the
unrestricted $V$ part, precisely
$E_t\otimes\Span\{P_{V^\perp}e\}$.  Therefore the squared norm of the
projection of the unit target $\xi_t\otimes e$ onto $\mathcal R_t$ is
\[
 v+(1-v)\|P_{E_t}\xi_t\|^2
 =v+(1-v)\frac{|\Lambda_t|}{n^t}
 =v+(1-v)p_t.
\]
Cauchy--Schwarz applied to $O_t^{\mathsf T}z$ proves
\eqref{hd:eq:correlation}.
\end{proof}

\subsection{Serial query complexity in the enlarged oracle}
\begin{theorem}[Serial source recovery with adaptive local channels]
\label{hd:thm:serial}
Fix $a\in(0,1)$, put $\rho=a^2/2$, and suppose that at most
$M\le H$ source sites are available.  Fix integers $b,b_{\rm out}\ge1$ and
set
\begin{equation}\label{hd:eq:dimensions}
 K_* =\left\lfloor\frac{\rho^2 nH}{4\mathrm e}\right\rfloor,
 \qquad
 D=\left\lceil
 \frac{32(bK_*+b_{\rm out})}{a^2}\sum_{t=0}^Hn^t
 \right\rceil.
\end{equation}
For the fixed product--Haar law in these finite dimensions, every adaptive
randomized strategy using at most $K_*$ matrix rounds, at most $b$ marked
physical vectors per vertex and round, and at most $b_{\rm out}$ marked
terminal vectors per vertex satisfies the following: for every marked
terminal vector $\widehat v\in\cal H_H$ with $\|\widehat v\|\le1$,
\begin{equation}\label{hd:eq:terminal-failure}
 \mathbb P\bigl(|\langle u_H,\widehat v\rangle|<a\bigr)\ge15/16.
\end{equation}
The statement allows arbitrary cached probes and all source vectors released
by the public rule.  Conversely, $nH$ matrix rounds suffice to recover
$u_H$ exactly.  Hence, for fixed $a,b,b_{\rm out}$, the success-$2/3$
terminal-direction complexity is $\Theta_a(nH)$.
\end{theorem}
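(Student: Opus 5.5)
The lower bound \eqref{hd:eq:terminal-failure} comes from combining the pathwise mass bound of \cref{hd:thm:accounting} with the correlation estimate of \cref{hd:lem:support}. The law is a fixed product--Haar distribution, so by Yao's principle we may fix the private seed and treat the strategy as deterministic. We run it under the lazy coupling of \cref{hd:lem:lazy}, with the terminal vector $\widehat v$ among the marked terminal vectors. By \cref{hd:lem:support}, $O_H^{\mathsf T}\widehat v\in\mathcal R_H$ at termination. Hence
\[
|\langle u_H,\widehat v\rangle|^2\le p_H(K_*)+\|P_Ve\|^2 .
\]
We bound the two terms separately and show that, with probability at least $15/16$, their sum is below $a^2$.

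\emph{Nuisance term.} By \eqref{hd:eq:projection} and the choice of $D$ in \eqref{hd:eq:dimensions}, $\E\|P_Ve\|^2=m_G/D\le a^2/32$. Markov's inequality then gives $\PP(\|P_Ve\|^2\ge a^2/2)\le1/16$.

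\emph{Propagation term.} We show deterministically that $p_H(K_*)<a^2/2=\rho$. The number of releases satisfies $r_{K_*}\le M\le H$. Each release at site $j$ requires $p_j\ge\rho$ at that moment, so releases should happen only at shallow depths and the $\rho^{-r}$ charge should be dominated. The plan is a two-step argument.

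First, we apply \eqref{hd:eq:mass-bound} at depth $t$ with the crude bound $\binom Kt\le(\mathrm eK/t)^t$. With $K\le K_*\le\rho^2nH/(4\mathrm e)$, this gives
\[
p_t\le\rho^{-r}\bigl(\rho^2H/(4t)\bigr)^t .
\]
Second, we rule out a deep release by contradiction. Consider the first release at a site $j$ with $j$ large relative to the number $r$ of previous releases. Releases happen at sites ordered in time, and a release at depth $j$ needs $\rho\le\rho^{-r}(\rho^2H/(4j))^j$. This is impossible once $j\ge H/4$, and more generally whenever $j\gtrsim r$.

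Even a cruder estimate may suffice. All $M\le H$ releases give a charge of at most $\rho^{-H}$, while at $t=H$ we have $(\rho^2/4)^H\rho^{-H}=(\rho/4)^H<\rho$. So $p_H<\rho$ directly, since $\binom{K}{H}/n^H\le(\mathrm eK/(nH))^H\le(\rho^2/4)^H$. I will check that this direct estimate is valid; it appears to close the argument without any finer ordering of releases. Combining the two terms, with probability at least $15/16$ we get $|\langle u_H,\widehat v\rangle|^2<\rho+a^2/2=a^2$.

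\emph{Converse.} We propagate $u_0=e$ forward. Along each edge $t$, we query all $n$ indices on the known vector $u_t$ over $n$ rounds, and average the replies:
\[
u_{t+1}=n^{-1/2}\sum_i A_{t,i}u_t .
\]
Pipelining is unnecessary, so $nH$ rounds suffice, and combining both directions yields $\Theta_a(nH)$.

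The main obstacle is the propagation term, namely confirming that the release charge $\rho^{-r_K}$ cannot outrun the binomial deficit. The direct $t=H$ estimate above seems to handle it. The remaining delicate point is verifying that the terminal vector, processed after the last release closure, is indeed covered by the support lemma, and that the conversion to a deterministic strategy is legitimate given the fixed instance law.
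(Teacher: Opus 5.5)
Your proposal is correct and follows the paper's proof. It fixes the seed and uses the $K_*$-round reservoir to get $\E\|P_Ve\|^2\le a^2/32$ and a Markov bound. It bounds $p_H\le\rho^{-M}\binom{K}{H}n^{-H}\le\rho^{-H}(\rho^2/4)^H=(\rho/4)^H<\rho$ (trivially $0$ when $K<H$), combines the two terms through the support lemma, and proves the converse by forward averaging $u_{t+1}=n^{-1/2}\sum_iA_{t,i}u_t$.

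The direct $t=H$ estimate you settle on is exactly the paper's, so drop the first-release ordering detour. It is unnecessary, and its claim that a release is ``impossible once $j\ge H/4$'' fails without a bound on the number $r$ of prior releases.
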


\begin{proof}
First fix the private seed, so the strategy is deterministic, and apply
Lemma~\ref{hd:lem:lazy}.  Use the $K_*$-round reservoir even when the actual
number of rounds is smaller; all unused Gaussian slots remain independent.
Thus
\[
 m_G=(bK_*+b_{\rm out})\sum_{t=0}^Hn^t,
 \qquad
 \mathbb E\|P_Ve\|^2\le a^2/32.
\]
With the event
\[
 \mathcal E=\{\|P_Ve\|^2\le\rho\},
\]
Markov's inequality gives $\mathbb P(\mathcal E^c)\le1/16$.

Let $K\le K_*$ be the actual number of rounds and let $r_K\le M$ be the
number of releases.  If $K<H$, Theorem~\ref{hd:thm:accounting} gives
$p_H=0$.  If $K\ge H$, then
\begin{align*}
 p_H
 &\le \rho^{-r_K}\frac{\binom KH}{n^H}
 \le \rho^{-M}\left(\frac{\mathrm eK_*}{nH}\right)^H\\
 &\le \rho^{-M}(\rho^2/4)^H
 \le(\rho/4)^H
 \le\rho/4.
\end{align*}
The third inequality uses the definition of $K_*$; the fourth uses
$M\le H$ and $0<\rho<1$.  The same conclusion therefore holds in both
cases.  On $\mathcal E$, Lemma~\ref{hd:lem:support} yields, simultaneously
for every marked terminal vector of norm at most one,
\[
 |\langle u_H,\widehat v\rangle|^2
 \le p_H+\|P_Ve\|^2
 \le\rho/4+\rho
 =5a^2/8<a^2.
\]
This proves \eqref{hd:eq:terminal-failure} for the fixed seed.  Integrating
over an independent private seed preserves the same static product--Haar
instance law.

For the converse, $u_0=e$ is public.  Given $u_t$, query every selected map
on the same vector and average:
\[
 u_{t+1}=\frac1{\sqrt n}\sum_{i=1}^nA_{t,i}u_t.
\]
Repeating over all $H$ edges uses $nH$ rounds and returns $u_H$ exactly.
\end{proof}

\begin{corollary}[Macro-stage interpretation]
For $M$ macro stages with $m\ge1$ branch edges per stage, take $H=Mm$ and
permit one source release at each macro endpoint.  For any fixed finite
numbers of local and terminal channels, the enlarged oracle requires
$\Omega_a(nMm)$ rounds.  This is a chronological product lower bound, not a
sum of separate one-stage bounds.
\end{corollary}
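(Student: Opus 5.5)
The corollary follows from \cref{hd:thm:serial} with $H=Mm$ and a public site set $\mathscr S=\{m,2m,\ldots,Mm\}$ of macro endpoints. Then $|\mathscr S|=M\le H$, so the hypothesis on the number of source sites holds. The release rule is exactly the one analyzed, so no new accounting is needed. The enlarged oracle releases a source $u_{jm}$ once the reachable mass at layer $jm$ reaches $\rho$. By \cref{hd:lem:reset}, each release costs at most a factor $\rho^{-1}$ in every mass coordinate. \Cref{hd:thm:accounting} then bounds the terminal mass after $K$ rounds by $\rho^{-M}\binom{K}{Mm}n^{-Mm}$.

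The main step is the arithmetic inside the proof of \cref{hd:thm:serial}, specialized to this $H$. Take $K\le K_*=\lfloor \rho^2 nMm/(4\mathrm e)\rfloor$. Since there are $M$ releases but $H=Mm\ge M$ edges, the per-edge propagation deficit satisfies
\[
 \rho^{-M}\Bigl(\frac{\mathrm eK}{nH}\Bigr)^H\le \rho^{-M}(\rho^2/4)^{H}\le(\rho/4)^H\le\rho/4 .
\]
This dominates the release charge. The bound is chronological: the deficit is raised to the total path length $H=Mm$, not summed stage by stage. So the $\Omega(nMm)$ bound is a single product-type bound over the whole serial chain, not an additive combination of one-stage bounds, as the statement emphasizes.

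Fix the channel numbers $b,b_{\rm out}$ and the threshold $a$, and choose $D$ as in \eqref{hd:eq:dimensions}. By \cref{hd:lem:lazy} and \cref{hd:lem:support}, every marked terminal vector of norm at most one then has correlation below $a$ with $u_H$, with probability at least $15/16$. Hence any strategy achieving correlation at least $a$ with probability $2/3$ needs more than $K_*=\Omega_a(nMm)$ rounds.

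The main obstacle is confirming that the release charge stays uniform, i.e.\ that $\rho^{-M}$ is absorbed because $M\le H$. This holds for every $m\ge1$, including $m=1$, where $M=H$ and the inequality $\rho^{-H}(\rho^2/4)^H\le(\rho/4)^H$ is still valid. A second point is that \cref{hd:thm:serial} already covers cached probes made before each release. With both checked, the corollary is immediate.
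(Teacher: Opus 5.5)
Your proposal is correct and matches the paper's argument. The paper gives no separate proof of the corollary. It reads the result off \cref{hd:thm:serial} with $H=Mm$ and the $M\le H$ macro endpoints $\{m,2m,\ldots,Mm\}$ as source sites. Your points are the same ones that argument relies on: the release charge $\rho^{-M}$ is absorbed by the propagation factor $(\rho^2/4)^H$ because $M\le H$ (including $m=1$), and the bound is over the whole chain rather than summed per stage.
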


\begin{remark}[Role of the local channels]
The parameter $b$ counts all physical vectors at a vertex whose formal
coordinates must be controlled before the reply, including the two possible
matrix-action inputs.  Increasing $b$ or $b_{\rm out}$ changes only the
finite multiplicity $D$ in \eqref{hd:eq:dimensions}; it does not change the
round lower bound.  No extra matrix action is granted by a marked channel.
\end{remark}

\subsection{A branching two-port saddle with the required radius scale}\label{hd:sec:pipe}
Fix $N\ge2$. Take $N$ consecutive vertex spaces and isometries $U_t$ as above, now indexed $0,\ldots,N-1$. Write $P=U_{N-2}\cdots U_0$. The primal connectors are $a\in\calH_0$ and $b\in\calH_{N-1}$; dual blocks are $w_t\in\calH_t$.

\subsubsection{Scalar normalization, proved explicitly}
Let $L_N$ be the Neumann path Laplacian (endpoint diagonal one, interior diagonal two, and adjacent entries minus one). Define
\begin{equation}\label{hd:eq:scalar-M}
 \lambda_N=N^{-2},\quad M_N=\lambda_N I+L_N,\quad
 k_N=(M_N^{-1})_{0,N-1},\quad
 c_N=\frac{(M_N^{-1})_{0,0}}{k_N},\quad d_N=\frac{2-c_N}{2}.
\end{equation}
\begin{lemma}[Uniform endpoint constants]\label{hd:lem:scalar}
For every $N\ge2$,
\[
 N/2<k_N\le N,\qquad 1\le c_N<\cosh1<8/5,
 \qquad 1/5<d_N\le1/2.
\]
\end{lemma}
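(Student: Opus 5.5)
The plan is to compute $M_N^{-1}$ in closed form by solving the tridiagonal system $M_Nx=e_{N-1}$ with a hyperbolic ansatz, and then to read off $k_N$ and $c_N$ as explicit hyperbolic expressions. The statement is purely scalar, so no earlier results are needed.

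\textbf{The column $M_N^{-1}e_{N-1}$.} Define $\alpha>0$ by $2\cosh\alpha=2+\lambda_N$; equivalently $4\sinh^2(\alpha/2)=\lambda_N$, that is $\sinh(\alpha/2)=1/(2N)$. Take the ansatz $x_t=C\cosh(\alpha(t+\tfrac12))$ for $0\le t\le N-1$.
\begin{itemize}
\item The interior rows $(2+\lambda_N)x_t-x_{t-1}-x_{t+1}=0$ hold by $\cosh(u+\alpha)+\cosh(u-\alpha)=2\cosh\alpha\cosh u$.
\item The Neumann row at $t=0$ reads $(1+\lambda_N)x_0=x_1$. It holds because $\cosh(3\alpha/2)+\cosh(\alpha/2)=2\cosh\alpha\cosh(\alpha/2)$ gives $\cosh(3\alpha/2)=(1+\lambda_N)\cosh(\alpha/2)$.
\item The last row $(1+\lambda_N)x_{N-1}-x_{N-2}=1$ fixes $C$. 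Set $u=\alpha(N-\tfrac12)$ and use $(1+\lambda_N)\cosh u=\cosh(u+\alpha)+\cosh(u-\alpha)-\cosh u$. The left side becomes $C[\cosh(u+\alpha)-\cosh u]=2C\sinh(\alpha N)\sinh(\alpha/2)$.
\end{itemize}
Hence $C=1/(2\sinh(\alpha/2)\sinh(\alpha N))=N/\sinh(\alpha N)$.

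\textbf{Reading off the constants.} The matrix $M_N^{-1}$ is symmetric and invariant under the reflection $t\mapsto N-1-t$. Therefore
\[
k_N=x_0=\frac{N\cosh(\alpha/2)}{\sinh(\alpha N)},\qquad
(M_N^{-1})_{0,0}=(M_N^{-1})_{N-1,N-1}=x_{N-1},\qquad
c_N=\frac{\cosh(\alpha(N-\tfrac12))}{\cosh(\alpha/2)}.
\]

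\textbf{Bounds on $\alpha N$.} From $\sinh(\alpha/2)=1/(2N)$ we get $\alpha/2\le 1/(2N)$, so $\alpha N\le1$. Using $\operatorname{asinh}s\ge s-s^3/6$ we also get $\alpha N\ge 1-1/(24N^2)\ge 0.98$ for $N\ge2$.

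\textbf{Deriving the inequalities.}
\begin{itemize}
\item $c_N\ge1$ holds because $\cosh$ is increasing on $[0,\infty)$.
\item $c_N<\cosh1$ holds because $c_N\le\cosh(\alpha(N-\tfrac12))$ and $\alpha(N-\tfrac12)<\alpha N\le1$. Then $\cosh1<1.55<8/5$.
\item The bounds on $d_N=(2-c_N)/2$ follow at once: $d_N\le1/2$ from $c_N\ge1$, and $d_N>(2-8/5)/2=1/5$.
\item $k_N>N/2$ is equivalent to $\sinh(\alpha N)<2\cosh(\alpha/2)$. This is true since $\sinh(\alpha N)\le\sinh1<1.18<2$.
\item $k_N\le N$ is equivalent to $\sinh(\alpha N)\ge\cosh(\alpha/2)$.
\end{itemize}

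\textbf{Main obstacle.} The only delicate point is the last inequality $\sinh(\alpha N)\ge\cosh(\alpha/2)$, which needs a uniform lower bound on $\alpha N$. With $\alpha N\ge0.98$ we have $\sinh(\alpha N)\ge\sinh(0.98)>1.14$. On the other side, $\alpha\le1/N\le1/2$ gives $\cosh(\alpha/2)\le\cosh(1/4)<1.04$. This settles every $N\ge2$; the worst case $N=2$ can also be checked numerically as a sanity test.
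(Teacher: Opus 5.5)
Your proof is correct and follows essentially the same route as the paper: the same hyperbolic closed form $x_t=C\cosh(\alpha(t+\tfrac12))$ for the last column of $M_N^{-1}$, reflection symmetry for the diagonal endpoint entry, and the bound $\alpha N\le 1$ for both $c_N<\cosh1$ and $k_N>N/2$. The only difference is the bound $k_N\le N$. The paper gets it without numerics: every entry of that column is at least $x_0=k_N$, and the entries sum to $N^2$ because $\mathbf 1^{\mathsf T}M_N=N^{-2}\mathbf 1^{\mathsf T}$. Your explicit estimate $\alpha N\ge 1-1/(24N^2)$, obtained from $\operatorname{asinh}s\ge s-s^3/6$, works equally well.
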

\begin{proof}
Put $\vartheta=2\operatorname{arsinh}(1/(2N))$, so $2\cosh\vartheta-2=N^{-2}$ and $0<N\vartheta<1$. The last column of $M_N^{-1}$ is
\[
 x_j=C\cosh((j+1/2)\vartheta),\quad
 C=\frac1{2\sinh(N\vartheta)\sinh(\vartheta/2)},
 \quad j=0,\ldots,N-1.
\]
The root equation follows from the fictitious boundary $x_{-1}=x_0$, the interior equations from the hyperbolic recurrence, and the last equation from
$\cosh((N+1/2)\vartheta)-\cosh((N-1/2)\vartheta)=2\sinh(N\vartheta)\sinh(\vartheta/2)$.
By reflection symmetry the two diagonal endpoint Green coefficients agree. Consequently
\[
 k_N=\frac{N\cosh(\vartheta/2)}{\sinh(N\vartheta)}>N/\sinh1>N/2,
 \qquad c_N=\frac{\cosh((N-1/2)\vartheta)}{\cosh(\vartheta/2)}<\cosh1.
\]
All $x_j\ge x_0=k_N$. Since $\mathbf1^\mathsf T M_N=N^{-2}\mathbf1^\mathsf T$, their sum is $N^2$, giving $Nk_N\le N^2$. Finally $c_N\ge1$ and the elementary bound $\cosh1<8/5$ give the claimed interval for $d_N$.
\end{proof}

Set
\begin{equation}\label{hd:eq:pipe-params}
 \beta=n^{-1/2},\qquad r_N=\sqrt{\beta/k_N}.
\end{equation}
For each selected component define the global quadratic
\begin{align}
 \mathscr H_i(a,b;w)={}&d_N(\norm a^2+\norm b^2)
 +r_N(\ip a{w_0}-\ip b{w_{N-1}})\notag\\
 &-\frac\beta2\left[\lambda_N\sum_{t=0}^{N-1}\norm{w_t}^2
       +\sum_{t=0}^{N-2}(\norm{w_t}^2+\norm{w_{t+1}}^2)\right]
 +\sum_{t=0}^{N-2}\ip{w_{t+1}}{A_{t,i}w_t}.
 \label{hd:eq:pipe-components}
\end{align}
Its exact average is
\begin{equation}\label{hd:eq:pipe-average}
 \mathscr H=d_N(\norm a^2+\norm b^2)
 +r_N(\ip a{w_0}-\ip b{w_{N-1}})
 -\frac\beta2\left[\lambda_N\sum_t\norm{w_t}^2+
          \sum_{t<N-1}\norm{w_{t+1}-U_tw_t}^2\right].
\end{equation}

\begin{theorem}[Smooth components, exact endpoint energy, and dual radius]\label{hd:thm:pipe}
For the quadratics above:
\begin{enumerate}[label=(\roman*),itemsep=3pt]
\item Each component has globally $8$-Lipschitz joint gradient, uniformly in $n,N,D$. The average is $\beta/N^2$-strongly concave in $w$.
\item For a self-adjoint operator $\mathcal D_N$ on the terminal connector space,
\begin{equation}\label{hd:eq:Q}
 Q(a,b):=\max_w\mathscr H(a,b;w)
 =\norm a^2-\ip b{Pa}+\ip b{\mathcal D_Nb},
 \qquad d_N I\preceq\mathcal D_N\preceq I,\quad \mathcal D_NP=P.
\end{equation}
In particular
\begin{equation}\label{hd:eq:Q-bounds}
 \frac15(\norm a^2+\norm b^2)< Q(a,b)
 \le\frac32(\norm a^2+\norm b^2)
\end{equation}
for nonzero $(a,b)$, and $Q$ has a dimension-independent positive Hessian lower bound. On the aligned source line $a=a_0u_0$, $b=b_0u_{N-1}$, it is exactly $a_0^2-a_0b_0+b_0^2$.
\item The unique unconstrained dual maximizer obeys
\begin{equation}\label{hd:eq:dual-radius}
 \norm{w^*(a,b)}\le\sqrt3\,Nn^{1/4}
                 \sqrt{\norm a^2+\norm b^2}.
\end{equation}
\item A selected-component value--gradient query is exactly simulated by one set of branch actions on the internal edges and public arithmetic. No hidden source vector occurs in the formula for this inner oracle.
\end{enumerate}
\end{theorem}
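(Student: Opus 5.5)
The plan is to treat the four items in order, working almost entirely in the formal (unrotated) coordinates. Write $w_t=O_t\omega_t$ with $\omega_t\in\calH_t$. Then $A_{t,i}$ becomes $J_{t,i}$, $U_t$ becomes $J_{t,s}$ and $P$ becomes $J_{N-2,s}\cdots J_{0,s}$. Every statement in (i)--(iii) is invariant under these rotations, so nothing is lost.

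\textbf{Item (i).} Bound the Hessian blockwise. The $a,b$ block has norm $2d_N\le1$. The connector--dual coupling has norm $r_N\le\sqrt{2\beta/N}\le1$, using $k_N>N/2$ from \cref{hd:lem:scalar}. The dual diagonal has norm at most $\beta(\lambda_N+2)\le3$. The bilinear edge terms $\ip{w_{t+1}}{A_{t,i}w_t}$ form a block tridiagonal matrix whose off-diagonal blocks have operator norm $\norm{A_{t,i}}_\op=1$; a path-graph argument bounds their sum by $2$. Adding these gives at most $8$. For the average, expanding $\norm{w_{t+1}-U_tw_t}^2$ and using that $U_t$ is an isometry recovers exactly the $\beta$-terms of \eqref{hd:eq:pipe-components} together with $n^{-1}\sum_iA_{t,i}=\beta U_t$. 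This proves \eqref{hd:eq:pipe-average}. Strong concavity $\beta\lambda_N=\beta/N^2$ comes from the $\lambda_N$ term.

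\textbf{Item (ii).} This is the main computation. Decompose each $\calH_t$ along the path direction. Let $\Pi$ denote the isometric chain $W=(\omega_0,J_{0,s}\omega_0,\dots)$ and take the orthogonal complements. The quadratic form $\lambda_N\sum\norm{\omega_t}^2+\sum\norm{\omega_{t+1}-J_{t,s}\omega_t}^2$ is then $\beta\,M_N\otimes I$ restricted to "transported" sequences. On the complementary pieces, where an $\omega_{t+1}$ component is orthogonal to the range of $J_{t,s}$, the chain is effectively cut, and one gets shorter path Laplacians of the same form.

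Maximizing the concave quadratic in $w$ gives
\[
Q(a,b)=d_N(\norm a^2+\norm b^2)+\tfrac{r_N^2}{2\beta}\,\ip{c}{\mathcal M^{-1}c},
\]
with $c=(a,0,\dots,0,-b)$ and $\mathcal M$ the operator form above. On the aligned sector, that is, $b$ in the range of $P$, this gives $(M_N^{-1})$ entries times $r_N^2/\beta=1/k_N$. Hence the $\norm a^2$ coefficient is $d_N+c_N/2=1$, and the cross term is $-\ip b{Pa}$. Since the $a$-side coefficient is exactly $1$ for every $a$, $P^{\mathsf T}$ is an isometry and the sector decomposition applies.

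On $b\perp\mathrm{range}(P)$, the $b$-coefficient is $d_N+\frac1{2k_N}(\text{Green diagonal of a shorter chain})$. This gives $\mathcal D_N$ self-adjoint with $\mathcal D_NP=P$ (coefficient $d_N+c_N/2=1$) and $d_NI\preceq\mathcal D_N\preceq I$, the latter by monotonicity of Green diagonals in chain length. The bounds \eqref{hd:eq:Q-bounds} then follow from $|\ip b{Pa}|\le\norm a\norm b$, $d_N>1/5$, and the $2\times2$ form $\begin{pmatrix}1&-1/2\\-1/2&d\end{pmatrix}$. I expect the bookkeeping of the complementary sectors to be the obstacle. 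I would handle it by an explicit orthogonal decomposition of $\calH_{N-1}$ into $\mathrm{range}(P)$ and layers $O_{N-1}(\cdots)$ indexed by the first edge where the word leaves $\xi$.

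\textbf{Item (iii).} The maximizer is $w^*=\frac{r_N}{\beta}\mathcal M^{-1}c$. The Green entries are at most $\max_j x_j$. The column sum $N^2$ together with the monotone profile gives entries of order $N$, so $\norm{\mathcal M^{-1}}\lesssim N^2$ restricted to endpoint inputs in the $\ell^2$ sense. More carefully, $\norm{w^*}^2=\frac{r_N^2}{\beta^2}\sum_j x_j^2$, and $\sum_jx_j^2\le(\max x)N^2\le \sqrt e\,k_N N^2$. Combined with $r_N^2=\beta/k_N$, this gives $\norm{w^*}^2\lesssim N^2/\beta=N^2\sqrt n$ times $\norm a^2+\norm b^2$. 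Tracking the constant yields $\sqrt3$.

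\textbf{Item (iv).} Use \eqref{hd:eq:pipe-components} directly. The value and gradient of $\mathscr H_i$ need only $A_{t,i}w_t$ and $A_{t,i}^{\mathsf T}w_{t+1}$ for each $t$, which is one forward and one transpose action per edge with index $i$, plus public arithmetic. No $u_t$ appears.
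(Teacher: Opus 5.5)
Your route is essentially the paper's: formal coordinates, the innovation decomposition, completing the square to $Q=d_N(\norm a^2+\norm b^2)+\frac1{2k_N}\ip{E}{\mathcal M^{-1}E}$ with $E=(a,0,\dots,0,-b)$ (your $c$), and reading off endpoint Green entries. Items (i), (iv) and the aligned-sector identities in (ii) are correct, though it is $P$, not $P^{\mathsf T}$, that is an isometry. The genuine gap is the lower bound in \eqref{hd:eq:Q-bounds}. Bounding $|\ip{b}{Pa}|\le\norm a\norm b$ and using only $\mathcal D_N\succeq d_NI$ leaves the scalar form $x^2-xy+d_Ny^2$, whose determinant is $d_N-\tfrac14$. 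But $c_N\to\cosh1$, so $d_N\to(2-\cosh1)/2\approx0.229<\tfrac14$. For large $N$ your bound is therefore indefinite and does not even give $Q>0$. The missing step is to use $\mathcal D_NP=P$ and self-adjointness to decouple the cross term. Write $b=Pc+b_\perp$ with $P^{\mathsf T}b_\perp=0$. Then $\ip{b}{Pa}=\ip{c}{a}$, the mixed terms $\ip{b_\perp}{\mathcal D_NPc}=\ip{b_\perp}{Pc}$ vanish, and $Q=\norm a^2-\ip{a}{c}+\norm c^2+\ip{b_\perp}{\mathcal D_Nb_\perp}$. The cross term now meets only the aligned part of $b$, whose coefficient is $1$; the form $x^2-xy+y^2$ lies between $\tfrac12$ and $\tfrac32$ times $x^2+y^2$. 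The bound $d_N>\tfrac15$ is then needed only on $b_\perp$.

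Two further steps need repair. First, the non-aligned sectors are not ``shorter path Laplacians of the same form.'' When the chain is cut at edge $(j-1,j)$, the innovation component of $w_j$ (in $\ker U_{j-1}^{\mathsf T}$) still contributes its squared norm to $\norm{w_j-U_{j-1}w_{j-1}}^2$. So each sector operator is a trailing principal submatrix of $M_N$, that is, a shorter Neumann chain plus a unit entrance anchor. This anchor is exactly what makes $\mathcal D_N\preceq I$ true. A free Neumann chain of length $m$ with mass $N^{-2}$ has column sums $N^2$, hence endpoint Green coefficient at least $N^2/m$ (exactly $N^2$ for $m=1$). That is far above $c_Nk_N<\tfrac85N$ when $m$ is small, so ``monotonicity in chain length'' runs the wrong way for the chains you describe. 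The correct tool is the Schur-complement inequality: the inverse of a positive principal submatrix is dominated by the corresponding block of the full inverse, which gives $G\preceq c_Nk_NI$.

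Second, in (iii) your column-sum estimate $\sum_jx_j^2\le c_Nk_NN^2$, combined over the two endpoints by $\norm{u+v}^2\le2\norm u^2+2\norm v^2$, gives the factor $2c_N$. This exceeds $3$ for large $N$, and the estimate does not address the non-aligned sectors. The paper avoids both issues by using $\mathcal M\succeq N^{-2}I$, hence $\mathcal M^{-2}\preceq N^2\mathcal M^{-1}$. This gives $\norm{w^*}^2\le\frac{N^2}{\beta k_N}\ip{E}{\mathcal M^{-1}E}\le(c_N+1)N^2\sqrt n\,(\norm a^2+\norm b^2)$ for all sectors simultaneously.
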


\begin{proof}
Let $\mathcal M$ denote the operator in the dual quadratic bracket of \eqref{hd:eq:pipe-average}. It is at least $N^{-2}I$. Its smallest eigenvalue is exactly $N^{-2}$, as can be seen on a vector with $w_{t+1}=U_tw_t$.
For individual smoothness, the diagonal Hessian norm is at most three, the symmetric off-diagonal dual part has norm at most two, and the endpoint primal--dual coupling part has norm at most $2r_N\le2$ (the last inequality uses $k_N>N/2\ge1$). Thus seven, and hence eight, bounds the full joint Hessian norm. Components need not be dual-concave; the average is, as required by the original model.

We compute the endpoint blocks of $\mathcal M^{-1}$. Define innovation spaces $\mathcal K_0=\calH_0$ and $\mathcal K_j=\ker U_{j-1}^{\mathsf T}$ for $j\ge1$. The orthogonal decomposition
\[
 \calH_t=\bigoplus_{j=0}^t U_{t-1}\cdots U_j\mathcal K_j
\]
separates the dual operator into scalar suffix chains. The $j=0$ chain is $M_N$. For $j>0$ it is the trailing principal submatrix of $M_N$, equivalently a shorter Neumann chain with the same mass plus a unit entrance anchoring term. Consequently
\begin{equation}\label{hd:eq:green}
 (\mathcal M^{-1})_{00}=c_Nk_N I,\qquad
 (\mathcal M^{-1})_{N-1,0}=k_NP,
\end{equation}
and, writing $G=(\mathcal M^{-1})_{N-1,N-1}$,
\begin{equation}\label{hd:eq:terminal-green}
 0\prec G\preceq c_Nk_N I,\qquad GP=c_Nk_NP.
\end{equation}
For the inequality, the inverse of a positive principal submatrix is at most the corresponding principal block of the full inverse. This follows immediately from the positive Schur-complement formula. Apply it to each shorter scalar suffix to bound its terminal Green coefficient by that of $M_N$. Orthogonality of the innovation decomposition yields the operator bound.

Let $E(a,b)$ insert $a$ into layer zero and $-b$ into layer $N-1$. Completing the dual square gives
\[
 Q=d_N(\norm a^2+\norm b^2)
       +\frac1{2k_N}\ip{E(a,b)}{\mathcal M^{-1}E(a,b)}.
\]
Equations \eqref{hd:eq:green}--\eqref{hd:eq:terminal-green} give \eqref{hd:eq:Q} with
$\mathcal D_N=d_NI+G/(2k_N)$, because $d_N+c_N/2=1$.
Decompose $b=Pc+b_\perp$, with $P^{\mathsf T}b_\perp=0$. Self-adjointness and $\mathcal D_NP=P$ imply
\[
 Q=\norm a^2-\ip a c+\norm c^2+\ip{b_\perp}{\mathcal D_Nb_\perp}.
\]
The first three terms are between one half and three halves of $\norm a^2+\norm c^2$. Combine this with $d_N>1/5$ and $\mathcal D_N\preceq I$ to prove \eqref{hd:eq:Q-bounds}. The scalar restriction follows from $Pu_0=u_{N-1}$.

The dual maximizer is
\[
 w^*=\frac1{\sqrt{\beta k_N}}\mathcal M^{-1}E(a,b).
\]
Since $\mathcal M\succeq N^{-2}I$, spectral calculus gives
$\mathcal M^{-2}\preceq N^2\mathcal M^{-1}$. Its endpoint block, by \eqref{hd:eq:green}--\eqref{hd:eq:terminal-green}, has norm at most $k_N(c_N+1)$. It follows that
\[
 \norm{w^*}^2\le\frac{N^2}{\beta k_N}
 \ip{E(a,b)}{\mathcal M^{-1}E(a,b)}
 \le\frac{(c_N+1)N^2}{\beta}(\norm a^2+\norm b^2)
 <3N^2n^{1/2}(\norm a^2+\norm b^2).
\]
Finally, in \eqref{hd:eq:pipe-components} all hidden matrices occur only in local bilinear terms. Their values and both gradients are computed by the forward and transpose actions once per edge. The endpoint couplings and diagonal terms are public.
\end{proof}

\begin{lemma}[Connector coercivity before removing the dual constraint]\label{hd:lem:bounded-dual}
Take any number of these modules, with connector tuple $\nu=((a_j,b_j))_j$, and constrain their full dual tuple to one closed ball of radius $R_Y$. Define
\[
 Q_{R_Y}(\nu)=\max_{\norm w\le R_Y}\sum_j\mathscr H^{(j)}(a_j,b_j;w^{(j)}).
\]
This function is differentiable and at least $2/5$-strongly convex, with $\nabla Q_{R_Y}(0)=0$. If the full outer value function is
$\Phi(s,\nu)=G(s,\nu)+Q_{R_Y}(\nu)$, then
\begin{equation}\label{hd:eq:connector-coercivity}
 \norm\nu\le\frac52\left(\norm{\nabla_\nu\Phi(s,\nu)}+
                              \norm{\nabla_\nu G(s,\nu)}\right).
\end{equation}
In particular, a bound $\norm{\nabla_\nu G}\le C\sqrt{\Lambda(s)}$ gives a connector bound independent of the number of modules whenever the activation load $\Lambda(s)$ is bounded independently of that number. None of these statements assumes that the unconstrained dual maximizer is feasible.
\end{lemma}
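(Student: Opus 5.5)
The plan is to treat $Q_{R_Y}$ as a Danskin-type value function of a concave maximization over a compact convex set, and to obtain strong convexity from the explicit form of the unconstrained quadratic. For each fixed dual tuple $w$ in the ball, the function $\nu\mapsto\sum_j\mathscr H^{(j)}(a_j,b_j;w^{(j)})$ equals $\sum_j d_N(\norm{a_j}^2+\norm{b_j}^2)$ plus a term linear in $\nu$. Hence it is $2d_N$-strongly convex in $\nu$, with the same modulus for every $w$. By \cref{hd:lem:scalar}, $2d_N>2/5$. A pointwise maximum over $w$ of functions that are uniformly $2d_N$-strongly convex is again $2d_N$-strongly convex, so $Q_{R_Y}$ is at least $2/5$-strongly convex. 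This step never uses the unconstrained maximizer.

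For differentiability, I would use that the average $\mathscr H$ is $\beta/N^2$-strongly concave in $w$ (\cref{hd:thm:pipe}(i)). This strong concavity is uniform in $\nu$ and survives the sum over modules. The constrained maximizer $w_{R_Y}(\nu)$ over the closed ball is therefore unique. Danskin's theorem on the compact ball then gives differentiability, with
\[
 \nabla Q_{R_Y}(\nu)=\nabla_\nu\textstyle\sum_j\mathscr H^{(j)}(\nu;w_{R_Y}(\nu)).
\]
At $\nu=0$ the objective is $-\frac\beta2\langle w,\mathcal Mw\rangle$, which is maximized uniquely at $w=0$, and $0$ lies in the ball. The $\nu$-gradient at $(0,0)$ consists of the terms $2d_Na_j+r_Nw_0^{(j)}$ and their analogues, all of which vanish. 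Hence $\nabla Q_{R_Y}(0)=0$.

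For \eqref{hd:eq:connector-coercivity}, I would start from strong convexity and $\nabla Q_{R_Y}(0)=0$:
\[
 \tfrac25\norm\nu^2\le\ip{\nabla Q_{R_Y}(\nu)-\nabla Q_{R_Y}(0)}{\nu}\le\norm{\nabla Q_{R_Y}(\nu)}\norm\nu.
\]
This gives $\norm\nu\le\frac52\norm{\nabla Q_{R_Y}(\nu)}$. I would then write $\nabla Q_{R_Y}=\nabla_\nu\Phi-\nabla_\nu G$, which is valid when $G$ is differentiable in $\nu$; I read this as the intended meaning of $\nabla_\nu G$. The triangle inequality finishes the bound, and the stated consequence about activation loads follows by substitution. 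I expect the main subtlety to be the differentiability claim. There one must check that the constrained maximizer is unique and continuous in $\nu$, for which uniform strong concavity and compactness suffice. One must also state clearly that the modulus $2/5$ holds independently of the number of modules and of $R_Y$. Both facts come from the block-separable primal quadratic and the uniform bound $d_N>1/5$.
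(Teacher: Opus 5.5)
Your proposal is correct and follows the paper's proof essentially step for step. The paper also splits $Q_{R_Y}$ into the common quadratic $\sum_j d_N\norm{\nu_j}^2$, whose Hessian is at least $\tfrac25 I$, plus a pointwise maximum of functions affine in $\nu$. It uses uniqueness of the dual maximizer, from strict dual concavity, to differentiate the maximum and obtain $\nabla Q_{R_Y}(0)=0$. It then concludes by strong monotonicity, Cauchy--Schwarz, and the substitution $\nabla Q_{R_Y}=\nabla_\nu\Phi-\nabla_\nu G$.
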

\begin{proof}
Write $Q_{R_Y}=\sum_jd_N\norm{\nu_j}^2+q_{R_Y}(\nu)$. The second term is a maximum of affine functions of $\nu$ and is convex. At $\nu=0$, the unique dual maximizer is $w=0$. Strict dual concavity gives uniqueness for every $\nu$, so differentiation of the maximum gives a continuous gradient and $\nabla q_{R_Y}(0)=0$. The quadratic term has Hessian at least $2/5$ times the identity. Strong monotonicity therefore gives
\[
 \frac25\norm\nu^2\le\ip{\nabla Q_{R_Y}(\nu)}\nu.
\]
Substitute $\nabla Q_{R_Y}=\nabla_\nu\Phi-\nabla_\nu G$ and apply Cauchy--Schwarz, dividing by $\norm\nu$ unless it is zero. This proves \eqref{hd:eq:connector-coercivity}. After, and only after, a uniform connector bound has been established, \eqref{hd:eq:dual-radius} can be applied to show that the unconstrained maximizer fits into a suitably chosen ball of radius $O(Nn^{1/4})$.
\end{proof}

\subsection{Scalar primitives and a dimension-free radial cap}\label{hd:sec:primitives}
Define the smooth monotone step
\[
 \omega(t)=\begin{cases}
 0,&t\le0,\\
 \displaystyle\frac{e^{-1/t}}{e^{-1/t}+e^{-1/(1-t)}},&0<t<1,\\
 1,&t\ge1.
 \end{cases}
\]
It is flat at both endpoints, satisfies $\omega(t)+\omega(1-t)=1$, and
\begin{equation}\label{hd:eq:step-derivatives}
 0\le\omega'\le8,\qquad |\omega''|\le128.
\end{equation}
For completeness, on $0<t\le1/2$ put $u=1/t\ge2$. The logistic derivative is
$\omega'=\omega(1-\omega)[t^{-2}+(1-t)^{-2}]$ and
$\omega(1-\omega)\le e^{2-u}$. This bounds the first derivative by
$e^2e^{-u}(u^2+4)\le8$. Differentiating again bounds the absolute second derivative by
$e^2e^{-u}(u^4+2u^3+8u^2+32)<128$, using the separate maxima of the four monomials times $e^{-u}$ on $[2,\infty)$. Symmetry covers the other half interval.

Let $p(t)=0$ for $t\le0$ and $p(t)=\int_0^t\omega(v)\,dv$ for $t>0$. Then $p(t)=t-1/2$ for $t\ge1$. Set
\begin{equation}\label{hd:eq:scalar-gates}
 q(t)=\omega((5t-1)/4),\qquad
 e_s(t)=p(t+3)-p(t-2)-5/2.
\end{equation}
Thus $q=0$ on $(-\infty,1/5]$, $q=1$ on $[1,\infty)$, $0\le q'\le10$, and $|q''|\le200$.
Also $e_s(t)=t$ on $[-2,2]$, $|e_s|\le5/2$, $0\le e_s'\le1$, $|e_s''|\le16$, and $e_s'$ is zero outside $[-3,3]$.

For $C>0$, define the radial length function
\[
 f_C(r)=\begin{cases}
 r,&0\le r\le C,\\
 C+C\displaystyle\int_0^{(r-C)/C}(1-\omega(v))\,dv,&C<r<2C,\\
 3C/2,&r\ge2C.
 \end{cases}
\]
On any Euclidean space put
\begin{equation}\label{hd:eq:radial-cap}
 \rho_C(x)=\begin{cases} f_C(\norm x)x/\norm x,&x\ne0,\\0,&x=0.\end{cases}
\end{equation}
This map is smooth, is the identity for $\norm x\le C$, and is a scalar multiple of its input. It satisfies
\begin{equation}\label{hd:eq:cap-properties}
 \norm{\rho_C(x)}\le\min\{\norm x,2C\},\quad
 \norm{D\rho_C}\le1,\quad \norm{D^2\rho_C}\le18/C.
\end{equation}
Indeed $0\le f_C'\le1$, $f_C\le r$, $f_C\ge C$ for $r\ge C$, and $|f_C''|\le8/C$. For $g(r)=f_C(r)/r$, direct differentiation bounds $\norm{D^2(g(\norm x)x)}$ by $3|g'|+r|g''|\le |f_C''|+7/r$ outside the identity region. This proves the last assertion with room in the constant. Flatness of $\omega$ gives smooth gluing. In particular, for a unit $u$, the score $t=\ip u{\rho_C(x)}$ is $1$-Lipschitz.

\subsection{An explicit global outer construction}\label{hd:sec:global}
Fix integers $M\ge1$ and $N\ge2$, and put $m=N-1$ and $H=Mm$. Use the global branch spaces and rotations at depths $0,\ldots,H$. Stage $j$ uses depths $(j-1)m,\ldots,jm$. Its connectors are $a_j\in\calH_{(j-1)m}$, $b_j\in\calH_{jm}$; its dual tuple is a separate copy of these $N$ vertex spaces. Different stages have distinct dual variables, even at a shared endpoint depth. Its state is $S_j\in\calH_{jm}$. All vector blocks at the same depth use the same rotation. Write $v_j=u_{jm}$ and $v_0=u_0=e$.

The following constants are numerical and do not depend on any problem parameter:
\begin{equation}\label{hd:eq:outer-constants}
 C_\nu=21,\quad \sigma=42,\quad \gamma=4300,\quad
 K_s=172,\quad R_s=10,\quad
 \delta_g=\frac1{430000},\quad c_R=32416,\quad \tau_0=\frac1{20}.
\end{equation}
Let $E_\nu=\rho_{21}$, $Z=\rho_{10}$ and set
\begin{equation}\label{hd:eq:state-corrections}
 \begin{split}
 t_j&=\ip{v_j}{Z(S_j)},\qquad t_0=1,\\
 h(t)&=t\,\omega((t-\delta_g)/\delta_g),\\
 V(t)&=-\frac{c_R+1}{10}\,[p(10t-1)-p(10t-10)],\\
 P_j(S_j)&=\frac\gamma2\norm{S_j}^2-
              \frac\gamma2 h(t_j)^2+V(t_j).
 \end{split}
\end{equation}
We have $h=h'=0$ for $t\le\delta_g$, $h(t)=t$ for $t\ge2\delta_g$, $h,h'\ge0$, and $h(t)\le\max\{t,0\}$. Moreover
\begin{equation}\label{hd:eq:V-properties}
 -\frac9{10}(c_R+1)\le V\le0,\quad V'\le0,\quad
 V'=0\ (t\le1/10),\quad V'=-(c_R+1)\ (1/5\le t\le1).
\end{equation}
In particular, every source-dependent part of $P_j$ has an exactly constant first jet for $|t_j|\le\delta_g$; the remaining state quadratic is public and isotropic.

Define the outer couplings
\begin{align}
 G_{\mathrm{en},j}
  &=-4q(t_{j-1})(1-q(t_j))\ip{v_{j-1}}{E_\nu(a_j)},\label{hd:eq:global-en}\\
 G_{\mathrm{ex},j}
  &=-q(t_{j-1})(1-q(t_j))\ip{Z(S_j)}{E_\nu(b_j)},\label{hd:eq:global-ex}\\
 G_{\mathrm{st},j}
  &=K_s q(t_j)(1-e_s(t_{j-1}))(1-q(t_{j-1})).\label{hd:eq:global-st}
\end{align}
At the first stage use the public values $t_0=1$, $v_0=e$. There is no variable $S_0$.
The exit \eqref{hd:eq:global-ex} is a \emph{public vector inner product}. It contains no rank-one projection onto $v_j$. Its only source-specific dependence is through the exactly flat state gates. This difference from the ungated rank-one exit is essential.

Using the two-port components \eqref{hd:eq:pipe-components} on each stage, define
\begin{equation}\label{hd:eq:global-components}
 \bar f_i(S,\nu;w)=\sum_{j=1}^M
 \left[\mathscr H_i^{(j)}(a_j,b_j;w^{(j)})+
       G_{\mathrm{en},j}+G_{\mathrm{ex},j}+G_{\mathrm{st},j}+P_j(S_j)\right],
 \qquad \nu=((a_j,b_j))_j.
\end{equation}
The known dual domain is the single Euclidean ball
\begin{equation}\label{hd:eq:global-dual-domain}
 Y=\{w:\norm w\le20\sqrt3\,n^{1/4}N\}.
\end{equation}
Let $\bar\Phi(S,\nu)=\max_{w\in Y}n^{-1}\sum_i\bar f_i(S,\nu;w)$.

\begin{lemma}[Bounded-overlap Hessian summation]\label{hd:lem:overlap}
Let $G=\sum_{r}G_r$ be a twice differentiable function on an orthogonal block product.  Suppose each $G_r$ depends on a block subset $I_r$, $\|\nabla^2G_r\|_{\rm op}\le C_r$, and every block belongs to at most $d_0$ subsets.  If $C_r\le C$, then
\[
 \|\nabla^2G\|_{\rm op}\le d_0C.
\]
More generally the right side is the maximum, over blocks, of the sum of the incident $C_r$'s.
\end{lemma}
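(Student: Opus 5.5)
The plan is to bound the quadratic form $\ip{h}{\nabla^2G\,h}$ for an arbitrary direction $h=(h_\beta)_\beta$, written in block coordinates of the orthogonal product, and to use the bounded overlap through Cauchy--Schwarz. Since $\nabla^2G$ is symmetric, its operator norm equals $\sup_{\norm h=1}|\ip{h}{\nabla^2G\,h}|$. By linearity, $\nabla^2G=\sum_r\nabla^2G_r$.

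\textbf{Step 1 (localize each term).} Because $G_r$ depends only on the blocks in $I_r$, its Hessian vanishes outside the $I_r\times I_r$ principal block. Write $h_{I_r}$ for the restriction of $h$ to those blocks, padded with zeros elsewhere. Then
\[
|\ip{h}{\nabla^2G_r\,h}|=|\ip{h_{I_r}}{\nabla^2G_r\,h_{I_r}}|\le C_r\norm{h_{I_r}}^2 .
\]

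\textbf{Step 2 (sum over terms).} By orthogonality of the block product, $\norm{h_{I_r}}^2=\sum_{\beta\in I_r}\norm{h_\beta}^2$. Summing over $r$ and exchanging the order of summation gives
\[
|\ip{h}{\nabla^2G\,h}|\le\sum_r C_r\sum_{\beta\in I_r}\norm{h_\beta}^2=\sum_\beta\Bigl(\sum_{r:\beta\in I_r}C_r\Bigr)\norm{h_\beta}^2 .
\]
This is at most $\max_\beta\sum_{r\ni\beta}C_r$ times $\norm h^2$, which is the general statement. If every $C_r\le C$ and each block meets at most $d_0$ subsets, the inner sum is at most $d_0C$, which is the first statement.

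\textbf{Potential obstacles.} First, the sum over $r$ may be infinite. Here it is finite, and in general one needs only pointwise convergence of the Hessians, which the hypothesis that $G$ is twice differentiable with $G=\sum_rG_r$ provides. Second, Step 1 requires the Hessian of $G_r$ to have exactly zero off-support blocks. This holds because the $G_r$ are twice differentiable and independent of the other blocks.
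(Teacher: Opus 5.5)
Your proof is correct and is essentially the paper's argument: bound each $|\langle h,\nabla^2G_r h\rangle|$ by $C_r\|P_rh\|^2$ using the support of $G_r$, then regroup the sum by blocks so that each $\|h_\beta\|^2$ is weighted only by the constants of its incident terms. Your aside on infinitely many terms is unnecessary here (and the claim that twice differentiability alone justifies termwise Hessians would need more care in general). In the paper's finite-block setting, bounded overlap leaves only finitely many nonconstant terms.
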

\begin{proof}
For any block vector $h$, with $P_rh$ its restriction to $I_r$,
\[
 |\langle h,\nabla^2G h\rangle|
 \le\sum_r C_r\|P_rh\|^2.
\]
Each block norm is counted only by its incident terms.  Taking the maximum incident sum gives the assertion.  This is the precise reason that the global smoothness constant below does not acquire an $M$ or $\sqrt M$ factor.
\end{proof}

\begin{theorem}[Global admissibility and initial gap]\label{hd:thm:global-regularity}
For all choices of the fixed rotations, every component \eqref{hd:eq:global-components} is globally jointly $L_0$-smooth with the numerical choice $L_0=10^8$. The average is $n^{-1/2}N^{-2}$-strongly concave in $w$. The value function is differentiable, $L_0$-weakly convex and bounded below, and
\begin{equation}\label{hd:eq:global-gap}
 \bar\Phi(0)=0,\qquad \bar\Phi(0)-\inf\bar\Phi\le C_\Delta M,
 \qquad C_\Delta=31000.
\end{equation}
\end{theorem}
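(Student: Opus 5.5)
The plan is to bound the Hessian of each component $\bar f_i$ block by block and then sum with \cref{hd:lem:overlap}. The pieces in \eqref{hd:eq:global-components} are the two-port quadratics $\mathscr H_i^{(j)}$, the entrance, exit and state couplings, and the state potentials $P_j$. \Cref{hd:thm:pipe}(i) already gives a joint Hessian bound of $8$ for each $\mathscr H_i^{(j)}$, and distinct stages use disjoint connector and dual blocks.

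For the outer couplings, each coupling depends only on the blocks $(S_{j-1},S_j,a_j)$ or $(S_{j-1},S_j,b_j)$. I will bound each one by the product rule together with three facts:
\begin{itemize}
\item $t_j=\ip{v_j}{Z(S_j)}$ is $1$-Lipschitz, with second derivative at most $18/10$, by \eqref{hd:eq:cap-properties}.
\item $q,e_s,\omega,p$ have the derivative bounds listed in \cref{hd:sec:primitives}.
\item $E_\nu$ and $Z$ are bounded by $42$ and $20$, with first derivatives at most $1$ and second derivatives at most $18/C$.
\end{itemize}
Each coupling is then a product of at most three bounded factors with bounded first and second jets. This gives a numerical Hessian bound for each coupling, independent of $n,N,D,M$ and of the rotations.

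The potential $P_j$ needs more care. The term $\frac\gamma2\norm{S_j}^2-\frac\gamma2h(t_j)^2$ has Hessian $\gamma I-\gamma\,(\cdot)$. I expect this to be the most delicate bookkeeping step, because $h(t)^2$ is composed with $t_j$ and the chain rule gives terms like $\gamma(hh')'(t)\,\nabla t\nabla t^{\mathsf T}+\gamma hh'\nabla^2t$. These stay bounded because $|t_j|\le 20$, and $h,h',h''$ are bounded for $|t|\le20$, with $h''=O(1/\delta_g)$ only on $[\delta_g,2\delta_g]$ where $h=O(\delta_g)$. So $hh''$ is $O(1)$ there. $V$ contributes $O(c_R)$. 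Every block belongs to at most a fixed number of terms: stages $j$ and $j+1$ for $S_j$, plus one pipe. \Cref{hd:lem:overlap} then gives a numerical $L_0$, and the crude constants all fit below $10^8$.

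Strong concavity of the average in $w$ follows because the couplings and potentials are $w$-free, so only the pipes contribute. Each pipe is $\beta N^{-2}$-strongly concave by \cref{hd:thm:pipe}(i), and the stages are disjoint. Strong concavity over the compact ball gives a unique maximizer, so Danskin yields differentiability. $L_0$-weak convexity follows as in \cref{lem:moreau}.

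At $0$ every $t_j=0$ for $j\ge1$, so $q(t_j)=0$ and $h=V=0$. The only possibly nonzero terms are the first-stage couplings, which carry the factor $E_\nu(a_1)=0$ or $Z(S_1)=0$. With $w=0$ optimal this gives $\bar\Phi(0)=0$.

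For the lower bound I keep $Q_{R_Y}\ge0$: its maximum over the ball is at least its value at $w=0$, which is $d_N(\norm a^2+\norm b^2)\ge0$. Stage by stage, I then use the following bounds:
\begin{itemize}
\item $G_{\mathrm{en},j}\ge-4\cdot 32$.
\item $G_{\mathrm{ex},j}\ge-10\cdot 32$.
\item $G_{\mathrm{st},j}\ge -K_s\cdot O(1)$, using that $1-e_s$ is bounded by $7/2$ in absolute value.
\item $P_j\ge\frac\gamma2(\norm{S_j}^2-h(t_j)^2)+V\ge -\frac9{10}(c_R+1)$, since $h(t_j)\le|t_j|\le\norm{Z(S_j)}\le\norm{S_j}$.
\end{itemize}
Summing gives a per-stage bound of order $0.9\cdot32417+128+320+172\cdot3.5$, roughly $29{,}800$, which is below $31000$. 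Hence $\bar\Phi\ge -C_\Delta M$, which also shows $\bar\Phi$ is bounded below.
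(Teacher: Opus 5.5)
Your plan matches the paper's proof: termwise product/chain-rule Hessian bounds summed through bounded overlap (\cref{hd:lem:overlap}), strong dual concavity and differentiability coming only from the pipes, and the gap bound obtained from feasibility of $w=0$ together with $h(t_j)\le\norm{S_j}$ and $V\ge-\frac9{10}(c_R+1)$. There is one harmless numerical slip. Since $Z=\rho_{10}$ is the identity only on the radius-$10$ ball, $\norm{Z(S_j)}$ can reach $15$, so the exit term is bounded only by $-15\cdot 32$ rather than $-10\cdot 32$. With this correction your per-stage total is about $30{,}400$ rather than $29{,}800$; this is still below $31000$, and the paper gains extra slack by noting $G_{\mathrm{st},j}\ge0$ (where $q(t_{j-1})<1$, monotonicity gives $e_s(t_{j-1})\le1$).
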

\begin{proof}
Strong dual concavity and the component Hessian bound of eight for the two-port part have already been proved. Those components are block diagonal across stages.
We give generous explicit Hessian bounds for the new terms. For a state score, $\norm{Dt_j}\le1$, $\norm{D^2t_j}\le2$. The composed gate has derivative norm at most ten and Hessian norm at most $220$; the composed $e_s$ has derivative norm at most one and Hessian norm at most $18$. The connector cap has derivative norm at most one and Hessian norm at most one, with image norm at most $42$. The state cap has image norm at most $20$ and Hessian norm at most two.

On the score range $[-20,20]$, $|(h^2)'|\le680$ and $|(h^2)''|\le1666$. For the latter, in the only transition interval $[\delta_g,2\delta_g]$, one has $h'\le17$, $|h''|\le272/\delta_g$, and $h\le2\delta_g$; outside it the squared function is either zero or $t^2$. Also $|V'|\le32417$ and $|V''|\le160\cdot32417$. The chain rule bounds the Hessian of $P_j$ by
\[
 4300+2150(1666+2\cdot680)+160\cdot32417+2\cdot32417
 =11761754<12\cdot10^6.
\]

The product rule bounds the Hessians of $G_{\mathrm{en},j}$, $G_{\mathrm{ex},j}$ and $G_{\mathrm{st},j}$ respectively by $2\cdot10^5$, $6\cdot10^5$ and $4\cdot10^5$. To check the largest product bound, the inner product in the exit has absolute value at most $840$, gradient norm at most $62$, and Hessian norm at most $106$. Multiplying it by two gates with derivative/Hessian bounds $10,220$ gives
\[
 2\cdot220\cdot840+2\cdot10^2\cdot840+4\cdot10\cdot62+106
 =540186<6\cdot10^5,
\]
also when one gate and the inner product share the state variable.
For the state-coupling product, its preceding-state factor has value,
gradient, and Hessian bounds $3.5,36,808$. Its Hessian is therefore
at most $172(220\cdot3.5+808+2\cdot10\cdot36)=395256<4\cdot10^5$.
The only overlaps are local: a state occurs in its penalty and in the two neighboring stages; connectors occur in one outer stage. Bounding the quadratic form of each local Hessian on its support, then summing, gives a full outer Hessian norm below $4\cdot10^7$. Adding the two-port part is covered by $10^8$. These estimates use full Euclidean operator norms and are independent of all block dimensions and of $M,N,n$.

Since $|h(t_j)|\le|t_j|\le\norm{S_j}$, the first two terms of $P_j$ sum to a nonnegative number. At $w=0$, the two-port part is nonnegative. Also $G_{\mathrm{st},j}\ge0$: where $q(t_{j-1})<1$, monotonicity gives $e_s(t_{j-1})\le1$; elsewhere the last factor is zero. The entrance and exit are bounded below by $-4\sigma$ and $-2R_s\sigma$. Thus feasibility of $w=0$ gives
\[
 \bar\Phi\ge-M\left[\frac9{10}(c_R+1)+4\sigma+2R_s\sigma\right]
 >-31000M.
\]
At the primal origin all outer terms vanish and the average dual quadratic is maximized at zero. This proves \eqref{hd:eq:global-gap}. Uniqueness of the dual maximizer on the compact ball gives differentiability by direct differentiation of the maximum. Finally, adding $L_0\norm{(S,\nu)}^2/2$ makes every fixed-dual average function convex; taking its maximum proves weak convexity.
\end{proof}

\subsection{No low or intermediate state can hide outside the identity region}
Write $G=\sum_j(G_{\mathrm{en},j}+G_{\mathrm{ex},j}+G_{\mathrm{st},j})$.
It is useful to separate the direct occurrence of $Z(S_j)$ in the exit from all occurrences of the score $t_j$. The state gradient has the exact form
\begin{equation}\label{hd:eq:state-gradient-form}
 \nabla_{S_j}\bar\Phi
 =\gamma S_j+B_j\nabla t_j
  -q(t_{j-1})(1-q(t_j))DZ(S_j)^{\mathsf T}E_\nu(b_j),
 \quad B_j=-\gamma h(t_j)h'(t_j)+V'(t_j)+C_j,
\end{equation}
where $C_j$ is the sum of score derivatives of the outer couplings, with the direct vector occurrence held fixed. This formula holds even when the dual ball is active, because the inner value is independent of the states.

The product bounds above imply
\begin{equation}\label{hd:eq:score-derivative-bound}
 |C_j|\le K_s(1+7\cdot10)+8\cdot10\sigma+4R_s\cdot10\sigma=32372.
\end{equation}
There are two entrance, two exit, and two state-coupling contributions; missing boundary contributions are simply omitted. For $t_j\le1$, a weighted bound is
\begin{equation}\label{hd:eq:weighted-score-bound}
 |t_j C_j|\le K_s(3+7\cdot10)+8\cdot10\sigma+4R_s\cdot10\sigma=32716.
\end{equation}
The only change is $|t e_s'(t)|\le3$ in place of $|e_s'|\le1$; every $q'$ term has $t\in[1/5,1]$.
For $t\le1$, one has $|t h(t)h'(t)|\le17$ (the expression is zero for $t\le0$), and $|tV'(t)|\le32417$. Therefore
\begin{equation}\label{hd:eq:B-weighted}
 |t_j B_j|\le4300\cdot17+32417+32716=138233
 \qquad(t_j\le1).
\end{equation}

\begin{lemma}[Localization without any connector or dual-interiority assumption]\label{hd:lem:state-localization}
If $\norm{\nabla\bar\Phi}\le\tau_0$ and $t_j\le1$, then $\norm{S_j}<10$. In particular $Z(S_j)=S_j$ and $t_j=\ip{v_j}{S_j}$ at that state.
\end{lemma}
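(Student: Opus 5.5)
We argue by contradiction with a radial test of the state gradient. Suppose $\norm{\nabla\bar\Phi}\le\tau_0$, $t_j\le1$, and $\norm{S_j}\ge10$. The plan is to take the inner product of the exact form \eqref{hd:eq:state-gradient-form} with $S_j$. This form holds even when the dual ball is active, because the dual value does not depend on the states. We then show that the quadratic term $\gamma\norm{S_j}^2$ dominates everything else, so that $\norm{\nabla_{S_j}\bar\Phi}\,\norm{S_j}$ would be far larger than $\tau_0\norm{S_j}$.

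\textbf{Step 1: the score term.} Because $Z=\rho_{10}$ is a scalar multiple of its input, we can write $Z(S_j)=\lambda S_j$ with $\lambda=f_{10}(\norm{S_j})/\norm{S_j}\in(0,1]$. By Euler-type homogeneity along the ray,
\[
\ip{\nabla t_j}{S_j}=\ip{v_j}{DZ(S_j)S_j}=f_{10}'(\norm{S_j})\,\ip{v_j}{S_j}/\norm{S_j}\cdot\norm{S_j}.
\]
This quantity equals $(f_{10}'/\lambda)\,t_j$ up to sign conventions. Its magnitude is at most $\norm{S_j}\,|t_j|/f_{10}$, and $f_{10}\ge10$ on this range. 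This gives
\[
|B_j\ip{\nabla t_j}{S_j}|\le |t_jB_j|\,\norm{S_j}/10 .
\]

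\textbf{Step 2: bounding $|t_jB_j|$.} The bound \eqref{hd:eq:B-weighted} covers only $t_j\le1$, which is our case. It gives a contribution of at most $13824\,\norm{S_j}$. For negative $t_j$, the pieces $h$, $h'$, $V'$ vanish. The only remaining piece, $C_j$, is multiplied by $t_j$. Since \eqref{hd:eq:weighted-score-bound} is stated for all $t\le1$, it gives at most $32716\,\norm{S_j}/10$.

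\textbf{Step 3: the direct exit term.} We have $\norm{DZ}\le1$ and $\norm{E_\nu(b_j)}\le42$, and the gates lie in $[0,1]$. Hence this term contributes at most $42\norm{S_j}$ in absolute value.

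\textbf{Step 4: conclusion.} Combining the three steps,
\[
\ip{\nabla_{S_j}\bar\Phi}{S_j}\ge\gamma\norm{S_j}^2-(13824+42)\norm{S_j}\ge(43000-13866)\norm{S_j}>\tau_0\norm{S_j}
\]
whenever $\norm{S_j}\ge10$. This contradicts $\norm{\nabla\bar\Phi}\le\tau_0$. It follows that $\norm{S_j}<10$, so $Z(S_j)=S_j$ and $t_j=\ip{v_j}{S_j}$.

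\textbf{Main obstacle.} The delicate point is Step 1: the score term must be controlled in weighted form, via $t_jB_j$, rather than through $|B_j|$. The reason is that $\gamma hh'$ is not bounded uniformly, and only the combinations $|t\,hh'|$ and $|tV'|$ are. So I would first verify carefully the ray identity $\ip{\nabla t_j}{S_j}=t_j\,f_{10}'\norm{S_j}/f_{10}$. This identity puts the factor $t_j$ in front of $B_j$, and with $f_{10}'\le1$ and $f_{10}\ge10$ it also supplies the helpful factor $1/10$.
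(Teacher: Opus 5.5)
Your proposal is correct and is essentially the paper's proof. Like the paper, you run a radial test of \eqref{hd:eq:state-gradient-form} along $S_j/\norm{S_j}$, using $Dt_j[S_j/r]=f_{10}'(r)\,t_j/f_{10}(r)$ with $0\le f_{10}'\le1\le f_{10}/10$, the weighted bound \eqref{hd:eq:B-weighted}, and $\norm{E_\nu(b_j)}\le42$, which gives the same margin $4300r-13823.3-42>1$ for $r\ge10$.

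Two minor remarks. The second half of your Step~2 is redundant, because \eqref{hd:eq:B-weighted} already covers every $t_j\le1$. Also, $\gamma hh'$ is in fact bounded for $t\le1$, so the weighted form is needed only to reach the radius $10$ (an unweighted bound on $|B_j|$ would only give $\norm{S_j}<17$), not for boundedness.
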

\begin{proof}
Suppose $r=\norm{S_j}\ge10$, and let $f=f_{10}(r)$. In the radial direction $S_j/r$,
\[
 D t_j[S_j/r]=f_{10}'(r)t_j/f,
 \qquad DZ(S_j)[S_j/r]=f_{10}'(r)S_j/r.
\]
Use $f\ge10$, $0\le f_{10}'\le1$, \eqref{hd:eq:state-gradient-form}, \eqref{hd:eq:B-weighted}, and $\norm{E_\nu(b_j)}\le42$. The radial derivative is at least
\[
 4300r-138233/10-42\ge29134.7>1.
\]
This contradicts the small-gradient assumption. Notice that no bound on $b_j$, on any other state, or on the unconstrained dual maximizer was used.
\end{proof}

\subsection{A single frontier and a constant total connector norm}
\begin{lemma}[State classification and exclusion of a low--high inversion]\label{hd:lem:state-classification}
At every point with $\norm{\nabla\bar\Phi}\le\tau_0$, each state is either
\[
 \text{low: }-1/10<t_j\le1/5,
 \qquad\text{or high: }t_j\ge1.
\]
A low state cannot be followed by a high state.
\end{lemma}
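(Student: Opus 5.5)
The plan is to test the state gradient \eqref{hd:eq:state-gradient-form} against the source direction $v_j$. This is available whenever $t_j\le1$. Indeed, by Lemma~\ref{hd:lem:state-localization} we then have $\norm{S_j}<10$, so $Z(S_j)=S_j$, $DZ=I$, $t_j=\ip{v_j}{S_j}$ and $\nabla t_j=v_j$. Hence
\[
 \ip{\nabla_{S_j}\bar\Phi}{v_j}
 =\gamma t_j-\gamma h(t_j)h'(t_j)+V'(t_j)+C_j
 -q(t_{j-1})(1-q(t_j))\ip{v_j}{E_\nu(b_j)},
\]
and the last term has absolute value at most $42$. Every case below exhibits a value of this scalar of absolute value larger than $\tau_0=1/20$, contradicting $\norm{\nabla\bar\Phi}\le\tau_0$. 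States with $t_j\ge1$ are already high, so only $t_j<1$ needs to be treated, which rules out every $t_j\le1$ that is neither low nor high.

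\textbf{Intermediate range $1/5<t_j<1$.} Here $t_j\ge2\delta_g$, so $h(t_j)=t_j$ and $h'=1$, and the two $\gamma$ terms cancel exactly. By \eqref{hd:eq:V-properties}, $V'(t_j)=-(c_R+1)=-32417$. The bound \eqref{hd:eq:score-derivative-bound} gives $C_j\le32372$. The scalar is therefore at most $-32417+32372+42=-3$, which is impossible. The whole point of the constants $c_R$ and $K_s,\sigma,R_s$ is this margin of $3$.

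\textbf{Negative range $t_j\le-1/10$.} Here $h=h'=0$ and $V'=0$, so $\gamma t_j\le-430$. For $C_j$, every $q'(t_j)$ vanishes since $t_j\le1/5$. So the only score derivative left comes from the state coupling of stage $j+1$, $-K_sq(t_{j+1})e_s'(t_j)(1-q(t_j))$, which has size at most $K_s=172$. The scalar is then at most $-430+172+42<-\tau_0$. Together with the previous case, this gives the dichotomy low/high.

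\textbf{No low--high inversion.} Suppose $t_j$ is low and $t_{j+1}\ge1$. Then $q(t_j)=q'(t_j)=0$ and $q(t_{j+1})=1$. I first check which couplings contribute to $C_j$. The entrance and exit gates of stage $j+1$ carry the factor $q(t_j)$ and have zero first jet in $t_j$. The gate factor $(1-q(t_j))$ in stage $j$ also has zero derivative. The own coupling $G_{\mathrm{st},j}$ depends on $t_j$ only through $q(t_j)$, so it contributes nothing either. The only surviving term is $\partial_{t_j}G_{\mathrm{st},j+1}=-K_se_s'(t_j)=-K_s$, using $e_s(t)=t$ on $[-2,2]$. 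Hence $C_j=-172$. The exit term of stage $j$ may be nonzero but is bounded by $42$. Also $V'\le0$.

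The $\gamma$ part needs care near zero. For $t_j\ge2\delta_g$ it cancels. For $t_j\le0$ we have $h=0$, so it equals $\gamma t_j\le0$. For $0<t_j<2\delta_g$ it is bounded by $\gamma(2\delta_g+2\delta_g\cdot17)<1$, using $h\le t$ and $h'\le17$. In every subcase the scalar is at most $1-172+42<-\tau_0$, a contradiction.

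I expect the main obstacle to be the bookkeeping of $C_j$ in the last two cases. One must verify term by term that each entrance, exit and state gate involving $t_j$ has vanishing derivative whenever $t_j\le1/5$, including the boundary stages $j=1$ and $j=M$, where the corresponding terms are simply absent. The sign of the single surviving $K_s$ term must also be confirmed.
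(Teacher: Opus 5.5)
Your proposal is correct and follows the paper's proof essentially step for step: you project the state gradient onto $v_j$ inside the identity region, use the margin of $3$ supplied by $c_R$ on $(1/5,1)$, and isolate the single surviving outgoing state-coupling derivative $-K_s$ to exclude a low--high inversion. The only differences are cosmetic. The paper uses sign information: it notes that the outgoing state-coupling derivative is nonpositive when $t_j\le-1/10$, and it uses $hh'\ge0$ to get $D(t)\le2\gamma\delta_g=1/50$. You instead use cruder absolute-value bounds ($172$ and $0.36<1$), which still leave ample margin.
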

\begin{proof}
For all scores at most one, Lemma~\ref{hd:lem:state-localization} puts the state inside the identity region. We may take its directional derivative along the unit vector $v_j$ as an ordinary scalar score derivative, except for the direct vector exit.
If $t_j\le-1/10$, then $q=q'=h=V'=0$ at that score. The only nonzero score derivative among the outer terms is the outgoing state coupling, which is nonpositive because $e_s'\ge0$. The direct exit derivative is at most $\sigma$. Hence
\[
 \ip{v_j}{\nabla_{S_j}\bar\Phi}\le4300t_j+42\le-388<-1.
\]
If $1/5<t_j<1$, then $h(t_j)=t_j$, so the isotropic derivative $\gamma t_j$ cancels its source correction. Equations \eqref{hd:eq:V-properties} and \eqref{hd:eq:score-derivative-bound} give
\[
 \ip{v_j}{\nabla_{S_j}\bar\Phi}
 \le-32417+32372+42=-3.
\]
Both cases are impossible.

For a low state define its net scalar penalty derivative
\[
 D(t)=\gamma t-\gamma h(t)h'(t)+V'(t).
\]
It obeys $D(t)\le2\gamma\delta_g=1/50$: for $t\le0$ it is nonpositive; for $0<t<2\delta_g$, use $hh'\ge0$; for $t\ge2\delta_g$, the two quadratic derivatives cancel and $V'\le0$.
If the next state is high, its gate equals one, whereas the low state's gate and its derivative are zero. Its outgoing state-coupling derivative is exactly $-K_s=-172$, since $e_s'=1$ on the low interval. All other score derivatives vanish; the direct exit contributes at most $42$. Thus
\[
 \ip{v_j}{\nabla_{S_j}\bar\Phi}\le1/50+42-172<-1,
\]
again impossible.
\end{proof}

\begin{lemma}[Connector control at the only possible frontier]\label{hd:lem:global-connectors}
Suppose $\norm{\nabla\bar\Phi}\le\tau_0$ and $t_M\le1/5$. There is a unique $j$ such that states before $j$ are high and states $j,\ldots,M$ are low. Moreover
\begin{equation}\label{hd:eq:connector-final-bound}
 \norm{S_j}<1/4,\qquad \norm\nu\le43/4<21.
\end{equation}
These conclusions hold for the actual value function on the single dual ball, before asserting that its unconstrained dual maximizer is feasible.
\end{lemma}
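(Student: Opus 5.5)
The plan is to read the frontier structure off \cref{hd:lem:state-classification}, bound the frontier state from the exact state-gradient formula \eqref{hd:eq:state-gradient-form}, and then get the connector bound from \cref{hd:lem:bounded-dual}. Throughout, $\norm{\nabla\bar\Phi}\le\tau_0$ and $t_M\le1/5$.

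\emph{Frontier.} By \cref{hd:lem:state-classification}, every state is low, $-1/10<t_k\le1/5$, or high, $t_k\ge1$. Since $t_M\le1/5$, state $M$ is low. A low state cannot be followed by a high one, so the low states form a nonempty suffix. Let $j$ be the first low index. Then states $1,\ldots,j-1$ are high and states $j,\ldots,M$ are low. Uniqueness is immediate, since the index is determined by the classification.

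\emph{Gate pattern.} For a high state, $q(t_k)=1$. For a low state, $t_k\le1/5$, so $q(t_k)=0$. The public value $t_0=1$ gives $q(t_0)=1$. Hence the product $q(t_{k-1})(1-q(t_k))$ in \eqref{hd:eq:global-en}--\eqref{hd:eq:global-ex} equals one for $k=j$ and zero for every other $k$. The state couplings $G_{\mathrm{st},k}$ do not involve connectors. Therefore the only connector dependence in $G$ is through the stage-$j$ terms
\[
 -4\ip{v_{j-1}}{E_\nu(a_j)}-\ip{Z(S_j)}{E_\nu(b_j)}.
\]
Using $\norm{DE_\nu}\le1$, this gives $\norm{\nabla_\nu G}\le4+\norm{Z(S_j)}$.

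\emph{Bound on $S_j$.} Since $t_j\le1$, \cref{hd:lem:state-localization} gives $Z(S_j)=S_j$, $t_j=\ip{v_j}{S_j}$ and $\nabla t_j=v_j$. With the gate product equal to one at $j$, \eqref{hd:eq:state-gradient-form} reads
\[
 \nabla_{S_j}\bar\Phi=\gamma S_j+B_jv_j-E_\nu(b_j).
\]
Project onto $v_j^\perp$. This removes the $B_j$ term, and since $\norm{E_\nu(b_j)}\le42$ we obtain
\[
 \gamma\norm{P_{v_j^\perp}S_j}\le\tau_0+42,
 \qquad\text{so}\qquad
 \norm{P_{v_j^\perp}S_j}<0.01.
\]
The parallel part satisfies $|t_j|\le1/5$ because the state is low. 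Hence $\norm{S_j}^2\le1/25+10^{-4}<1/16$, i.e.\ $\norm{S_j}<1/4$.

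\emph{Connector bound.} The full value function has the form $G(S,\nu)+Q_{R_Y}(\nu)$, because the dual inner value depends only on connectors. Apply \eqref{hd:eq:connector-coercivity} with $\norm{\nabla_\nu\bar\Phi}\le\tau_0=1/20$:
\[
 \norm\nu\le\tfrac52\bigl(\tfrac1{20}+4+\tfrac14\bigr)=\tfrac{43}4.
\]
That lemma works directly with the constrained dual ball, so no feasibility of the unconstrained dual maximizer is needed.

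\emph{Main obstacle.} The step needing the most care is the gate pattern. I must confirm that the entrance and exit terms of every stage $k\ne j$ contribute nothing to $\nabla_\nu G$. This holds because their connector gradients carry the factor $q(t_{k-1})(1-q(t_k))$, which vanishes at $k\ne j$. Derivatives of the gates themselves affect only state gradients, not connector gradients.
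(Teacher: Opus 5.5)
Your proposal is correct and follows the paper's proof essentially step for step. You obtain the frontier from the state classification together with the public root $t_0=1$. Projecting \eqref{hd:eq:state-gradient-form} onto $v_j^\perp$ gives $\norm{P_{v_j^\perp}S_j}<1/100$ and hence $\norm{S_j}<1/4$. Finally, \cref{hd:lem:bounded-dual} with $\norm{\nabla_\nu G}\le4+\norm{Z(S_j)}$ yields $\norm\nu\le43/4$.

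One small bookkeeping point: the function playing the role of $G$ in \cref{hd:lem:bounded-dual} should also include the state penalties $P_k$. This is harmless, as the paper notes, because those penalties do not depend on $\nu$.
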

\begin{proof}
State classification, absence of a low--high inversion, and the public high root give the unique frontier. Since $q(t_{i-1})(1-q(t_i))$ vanishes except at $i=j$, only that stage contributes to $\nabla_\nu G$.
At its low state the cap is the identity. There is no state-coupling derivative, since all later gates are zero; its own gate derivative is zero. Projecting \eqref{hd:eq:state-gradient-form} onto $v_j^\perp$ gives
\[
 P_{v_j^\perp}\nabla_{S_j}\bar\Phi
 =4300P_{v_j^\perp}S_j-P_{v_j^\perp}E_\nu(b_j).
\]
Consequently $\norm{P_{v_j^\perp}S_j}\le(42+1/20)/4300<1/100$.
Together with $|t_j|\le1/5$, this implies $\norm{S_j}<1/4$.

The two surviving connector gradients have norms at most four and $\norm{Z(S_j)}<1/4$, respectively. Since the state penalties are independent of $\nu$, Lemma~\ref{hd:lem:bounded-dual} now gives
\[
 \norm\nu\le\frac52(1/20+4+1/4)=43/4.
\]
In particular all connector caps are now in their identity region. The order of the reasoning avoids any assumption about the unconstrained dual solution.
\end{proof}

\begin{theorem}[Global terminal-gradient certificate]\label{hd:thm:global-certificate}
For the explicitly defined functions and the dual ball \eqref{hd:eq:global-dual-domain},
\begin{equation}\label{hd:eq:global-certificate}
 \norm{\nabla\bar\Phi(S,\nu)}\le1/20\quad\Longrightarrow\quad t_M\ge1.
\end{equation}
This includes all off-source, arbitrary-norm primal points and all cases in which the constrained dual maximizer initially might have been on the boundary.
\end{theorem}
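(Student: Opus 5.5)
We argue by contradiction: assume $\norm{\nabla\bar\Phi(S,\nu)}\le\tau_0=1/20$ and $t_M<1$, and aim to show that some score direction carries a large derivative. By \cref{hd:lem:state-classification}, $t_M<1$ forces the terminal state to be low, so $t_M\le1/5$. Then \cref{hd:lem:global-connectors} applies. It gives a unique frontier index $j$ with high states before $j$ and low states from $j$ on, together with $\norm{S_j}<1/4$ and $\norm\nu\le43/4$. The last bound places every connector cap $E_\nu$ in its identity region. The connector bound also makes \eqref{hd:eq:dual-radius} available: summed over stages, the unconstrained maximizer has norm at most $\sqrt3Nn^{1/4}\norm\nu<20\sqrt3\,n^{1/4}N$. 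It is therefore feasible for the ball \eqref{hd:eq:global-dual-domain}, and locally the value function coincides with $\sum_jQ^{(j)}(a_j,b_j)+G+\sum_jP_j$, where each $Q^{(j)}$ is the explicit endpoint energy of \cref{hd:thm:pipe}(ii).

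Next we write the connector gradient at the frontier stage. Only stage $j$ has an active gate $q(t_{j-1})(1-q(t_j))=1$, and $q(t_{j-1})=1$ because $t_{j-1}\ge1$ or $j=1$. The stationarity equations are
\[
\nabla_{a_j}Q^{(j)}-4v_{j-1}\approx0,\qquad \nabla_{b_j}Q^{(j)}-S_j\approx0,
\]
and all other stages have $\nabla_{\nu}Q^{(i)}\approx0$, hence $\nu_i\approx0$ by coercivity. From \eqref{hd:eq:Q}, $\nabla_aQ=2a-P^{\mathsf T}b$ and $\nabla_bQ=-Pa+2\mathcal D_Nb$. Because $Pv_{j-1}=v_j$ and $\mathcal D_NP=P$, we solve this near-linear system up to $O(\tau_0)$ errors, using the Hessian lower bound $2/5$ for stability. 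Since $S_j$ is small, the solution is $a_j\approx2v_{j-1}+O(\norm{S_j})$ and $b_j\approx v_j+O(\norm{S_j})$. Explicitly, on the aligned line the system is $2a_0-b_0=4$ and $-a_0+2b_0\approx\ip{S_j}{v_j}$, which gives $b_0\approx(4+2t_j)/3\ge 1$, with orthogonal parts controlled by the $2/5$ strong convexity.

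We then test the $v_j$-derivative of the state gradient at $S_j$, using \eqref{hd:eq:state-gradient-form}. The exit term contributes $-\ip{v_j}{E_\nu(b_j)}\approx-b_0\le-1.2$, up to small errors. The penalty part satisfies $D(t_j)\le1/50$, as in the proof of \cref{hd:lem:state-classification}. The stage-$j$ gate derivatives vanish because $t_j\le1/5$ gives $q'(t_j)=0$ on $t_j\le1/5$ and $t_{j-1}$ is high. The later state coupling vanishes because $q(t_{j+1})=0$, and the incoming entrance and exit from stage $j+1$ have zero gates. Hence $\ip{v_j}{\nabla_{S_j}\bar\Phi}\le1/50-1.2+O(\tau_0+\norm{S_j})<-1/20$, which is a contradiction.

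\textbf{Main obstacle.} The difficulty lies in making the linear solve quantitatively tight enough. The required constants are $b_0\ge1$ against the combined errors from $\tau_0$, from $\norm{S_j}<1/4$, and from the $\mathcal D_N$ orthogonal component. If the margin $b_0-1/50$ is too thin, we would instead use the sharper bound $\norm{P_{v_j^\perp}S_j}<1/100$ and treat $t_j$ exactly in the aligned scalar system, so that the inequality is exact apart from $O(\tau_0)$ terms.
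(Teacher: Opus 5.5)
Your proposal follows the paper's proof: contradiction via state classification and the connector-control lemma, strict feasibility of the unconstrained dual maximizer, the projected connector equations $g_a=2A_j-B_j^{\rm sc}-4$ and $g_b=-A_j+2B_j^{\rm sc}-t_j$, which give $B_j^{\rm sc}=(4+2t_j+g_a+2g_b)/3>73/60$, and finally the $v_j$-state derivative $D(t_j)-B_j^{\rm sc}<-1$. The obstacle you anticipate does not arise: $P^{\mathsf T}v_j=v_{j-1}$ and $\mathcal D_Nv_j=v_j$ make the projected equations exact, so the orthogonal parts of $a_j,b_j$ never enter and no strong-convexity error control or $O(\norm{S_j})$ term is needed; also, your heuristic $a_j\approx2v_{j-1}$, $b_j\approx v_j$ should read $\tfrac83v_{j-1}$, $\tfrac43v_j$, which agrees with your own aligned computation.
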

\begin{proof}
If the conclusion fails, state classification gives $t_M\le1/5$. Apply Lemma~\ref{hd:lem:global-connectors}. The unconstrained maximizers of all two-port modules satisfy, by their squared radius estimates,
\[
 \norm{w^*(\nu)}\le\sqrt3 Nn^{1/4}\norm\nu
 \le\frac{43}{4}\sqrt3 Nn^{1/4}
 <20\sqrt3 Nn^{1/4}.
\]
Thus they are strictly inside the prescribed \emph{single} dual ball. The actual value and gradient at this point equal the unconstrained eliminated formulas.

Let $P=U_{jm-1}\cdots U_{(j-1)m}$ be the stage transfer. Then $Pv_{j-1}=v_j$ and $\mathcal D_Nv_j=v_j$. At the frontier all caps on connectors and on $S_j$ are identities. Put
\[
 A_j=\ip{v_{j-1}}{a_j},\quad B_j^{\rm sc}=\ip{v_j}{b_j},\quad s=t_j,
 \quad g_a=\ip{v_{j-1}}{\nabla_{a_j}\bar\Phi},\quad
 g_b=\ip{v_j}{\nabla_{b_j}\bar\Phi}.
\]
The two projected connector equations are exactly
\[
 g_a=2A_j-B_j^{\rm sc}-4,\qquad
 g_b=-A_j+2B_j^{\rm sc}-s.
\]
Therefore
\[
 B_j^{\rm sc}=\frac{4+2s+g_a+2g_b}{3}
 >\frac{4-1/5-3/20}{3}=\frac{73}{60}.
\]
The state derivative along $v_j$ is $D(s)-B_j^{\rm sc}$, with $D(s)\le1/50$ as proved above. It is less than $1/50-73/60<-1$, contradicting $\norm{\nabla\bar\Phi}\le1/20$.
\end{proof}

\begin{remark}[Sanity check: the target is not vacuous]
With every state equal to $2v_j$, all connectors and all dual variables equal to zero, all gates are one, all caps are in their identity regions, and every gradient is zero. Thus the construction has actual stationary points. The certificate says that finding one requires terminal direction information; it does not rely on a function without stationary points.
\end{remark}

\subsection{Moreau outputs and exact source-flat simulation}
\label{hd:sec:flat}
Let $\bar\epsilon=1/40$.  Since $\bar\Phi$ is $L_0$-weakly convex and
bounded below, its proximal objective has a unique minimizer $p$ and
\[
 g=\nabla\bar\Phi_{1/(2L_0)}(x)=2L_0(x-p)\in\partial\bar\Phi(p).
\]
The value function is differentiable here, so the last vector is
$\nabla\bar\Phi(p)$.  If $\|g\|\le\bar\epsilon$,
Theorem~\ref{hd:thm:global-certificate} gives $t_M(p)\ge1$.  The score is
$1$-Lipschitz, hence
\begin{equation}\label{hd:eq:moreau-terminal}
 t_M(x)\ge1-\bar\epsilon/(2L_0)>1/2.
\end{equation}
The vector
$\widehat v=Z(S_M)/(2R_s)$ is computable from the primal output alone and
has norm at most one.  Equation~\eqref{hd:eq:moreau-terminal} implies
\begin{equation}\label{hd:eq:decoding}
 \langle v_M,\widehat v\rangle>1/(4R_s)=1/40.
\end{equation}
No proximal point is supplied to the algorithm or entered into the
information accounting.

\begin{lemma}[Radial caps create no new orientation]
\label{hd:lem:radial-span}
For every cap $\rho_C$ in \eqref{hd:eq:radial-cap}, every $x,y$ in its
Euclidean space satisfy
\[
 \rho_C(x)\in\operatorname{span}\{x\},\qquad
 D\rho_C(x)^{\mathsf T}y\in\operatorname{span}\{x,y\}.
\]
Thus the values and gradients of all outer terms in
\eqref{hd:eq:global-components}, once their scalar coefficients and any
needed released sources are known, lie in the span of the query blocks and
those released sources.
\end{lemma}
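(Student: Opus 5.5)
The first claim is elementary linear algebra on the radial formula. For $x=0$ we have $\rho_C(0)=0$ and $D\rho_C(0)=(f_C'(0))I=I$, since $\rho_C$ is the identity on $\norm x\le C$; both inclusions then hold trivially. For $x\ne0$ write $\rho_C(x)=g(\norm x)\,x$ with $g(r)=f_C(r)/r$, which is smooth away from the origin. This already gives $\rho_C(x)\in\operatorname{span}\{x\}$. Differentiating,
\[
 D\rho_C(x)=g(\norm x)I+\frac{g'(\norm x)}{\norm x}\,xx^{\mathsf T}.
\]
This matrix is symmetric, and
\[
 D\rho_C(x)^{\mathsf T}y=g(\norm x)\,y+\frac{g'(\norm x)\ip xy}{\norm x}\,x\in\operatorname{span}\{x,y\}.
\]
Smoothness of the gluing at $r=C$ and $r=2C$ was already recorded after \eqref{hd:eq:cap-properties}, so no separate case analysis at those radii is needed.

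For the consequence, I would go through the outer terms of \eqref{hd:eq:global-components} one at a time. These are $G_{\mathrm{en},j}$, $G_{\mathrm{ex},j}$, $G_{\mathrm{st},j}$ and $P_j$. Each is a scalar function built from the scores $t_j=\ip{v_j}{Z(S_j)}$ and from public inner products of capped query blocks. I would write each gradient by the chain rule as a sum of scalar coefficients times vectors of three kinds:
\begin{itemize}
\item the quadratic term contributes $\gamma S_j$;
\item score derivatives contribute $\nabla t_j=DZ(S_j)^{\mathsf T}v_j$;
\item the entrance contributes $DE_\nu(a_j)^{\mathsf T}v_{j-1}$, and the exit contributes $DZ(S_j)^{\mathsf T}E_\nu(b_j)$ and $DE_\nu(b_j)^{\mathsf T}Z(S_j)$.
\end{itemize}
By the first part, each such vector lies in the span of the query block ($S_j$, $a_j$ or $b_j$) together with the relevant source $v_j$ or $v_{j-1}$, or the capped image of another query block. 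That capped image is itself in the span of its query block, again by the first part. The values are scalars and raise no span issue.

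The only point requiring care is the meaning of ``once the needed released sources are known.'' A vector $v_j$ appears in a gradient only multiplied by a coefficient involving $q(t_j)$, $q'(t_j)$, $h$, $h'$, $V'$ or $e_s'$ at the scores. So the claim has to be read as saying that every vector output lies in the stated span whenever the coefficient of an unreleased source is zero or the source is supplied. The exact-flatness statements after \eqref{hd:eq:V-properties} show that these coefficients are constant in the relevant regime, and the flat simulation will later supply that regime. I would therefore state the lemma's second sentence as a pure span assertion with coefficients treated as given, and leave to the subsequent coupling argument the task of showing the coefficients are computable without hidden data. That deferral is the main obstacle; the span computation itself is routine.
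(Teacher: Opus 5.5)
Your proof is correct and follows the paper's argument. Both write $\rho_C(x)=\alpha(\|x\|)x$, compute $D\rho_C(x)^{\mathsf T}y=\alpha(\|x\|)y+\alpha'(\|x\|)\langle x,y\rangle x/\|x\|$ (the origin is covered because the cap is the identity near zero), and apply the chain and product rules to the outer terms, treating scalar coefficients and needed released sources as given, which is how the lemma is worded. One small correction to your deferred remark: the factor $e_s'(t_{j-1})$ multiplying $\nabla t_{j-1}$ in $G_{\mathrm{st},j}$ is not flat. In the next lemma (\cref{hd:lem:source-flat-jet}) it is handled through $q(t_j)$ and the prefix property of releases, not through flatness of the coefficients.
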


\begin{proof}
For $x\ne0$, write $\rho_C(x)=\alpha(\|x\|)x$.  Then
\[
 D\rho_C(x)^{\mathsf T}y
 =\alpha(\|x\|)y+
 \frac{\alpha'(\|x\|)}{\|x\|}\langle x,y\rangle x.
\]
The formula extends continuously through the origin.  Apply it to
$E_\nu=\rho_{21}$ and $Z=\rho_{10}$ and use the product rule.
\end{proof}

\begin{lemma}[Prefix releases and exact inactive first jets]
\label{hd:lem:source-flat-jet}
Close the release rule after each matrix round by releasing every eligible
macro source.  Then the released macro sites always form a prefix.  Suppose
that, at a component query, every unreleased macro site satisfies
$|t_j|<\delta_g$.  The complete value--gradient first jet of every component
$\bar f_i$ at that query is then exactly computable from
\begin{enumerate}[label=(\alph*),itemsep=2pt]
\item the public query blocks and scalar primitives;
\item the forward and transpose matrix actions in the two-port modules; and
\item the already released source vectors.
\end{enumerate}
No unreleased source vector, projection onto such a source, or derivative in
such a direction is needed.  Moreover every vector in the computed gradient
belongs, at its depth, to the span of the marked query vectors, the matrix
reply vectors, and the released sources.
\end{lemma}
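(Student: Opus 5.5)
The proof goes term by term through \eqref{hd:eq:global-components}. The prefix property is settled first. Releases are closed after every round, and the accounting masses satisfy $p_0\ge p_1\ge\cdots$ by Lemma~\ref{hd:lem:reset}. Hence if site $jm$ is eligible ($p_{jm}\ge\rho$), every earlier macro site $j'm<jm$ has $p_{j'm}\ge p_{jm}\ge\rho$ and is eligible too. The closure therefore releases all of them, so the released sites always form an initial segment $\{1,\dots,j_0\}$.

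For the jet claim, fix a query with $|t_j|<\delta_g$ at every unreleased site $j>j_0$. The two-port parts $\mathscr H_i^{(j)}$ are handled by Theorem~\ref{hd:thm:pipe}(iv). Their values and gradients require only the public diagonal and endpoint terms plus one forward action $A_{t,i}w_t$ and one transpose action $A_{t,i}^{\mathsf T}w_{t+1}$ per internal edge, giving (b).

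For the penalties $P_j$: at a released site the source $v_j$ is known, so $t_j$, $\nabla t_j=DZ(S_j)^{\mathsf T}v_j$ and the whole jet come from (a) and (c). At an unreleased site, $|t_j|<\delta_g$ gives $h(t_j)=h'(t_j)=0$, and $V(t_j)=V'(t_j)=0$ because $V'=0$ for $t\le 1/10$ and $p(10t-1)=p(10t-10)=0$ there. So $P_j$ reduces to the public quadratic $\tfrac\gamma2\|S_j\|^2$ with gradient $\gamma S_j$, and no derivative along $v_j$ appears.

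The couplings are next. In both $G_{\mathrm{en},j}$ and $G_{\mathrm{ex},j}$ the gate factor is $q(t_{j-1})(1-q(t_j))$. If $j-1>j_0$, then $|t_{j-1}|<\delta_g\le1/5$, so $q(t_{j-1})=q'(t_{j-1})=0$ and, being a product, the term has vanishing value and gradient. If $j-1\le j_0$, then $v_{j-1}$ is released (or public when $j=1$), so the entrance vector $v_{j-1}$ is available. The exit is the public inner product $\ip{Z(S_j)}{E_\nu(b_j)}$. The remaining factor $1-q(t_j)$ either involves a released source or, for $j>j_0$, equals $1$ with zero derivative. The state coupling $G_{\mathrm{st},j}$ has the factor $q(t_j)$: for $j>j_0$ it vanishes to first order, and otherwise all its scores are released or public. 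Every scalar appearing is thus a function of (a) and (c), and no unreleased source enters.

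For the span statement, Lemma~\ref{hd:lem:radial-span} gives $\rho_C(x)\in\Span\{x\}$ and $D\rho_C(x)^{\mathsf T}y\in\Span\{x,y\}$. Applying it to every gradient block then shows that each block is a linear combination, with computable scalar coefficients, of query blocks at that depth, released sources $v_{j'}$ (which enter only through $\nabla t_{j'}$ and the entrance vectors), and matrix replies from the two-port parts.

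The main obstacle is the bookkeeping for the entrance of the first unreleased stage $j=j_0+1$. Its vector $v_{j_0}$ is released, but the gate $1-q(t_{j})$ involves the unreleased score $t_{j}$, so one must use $q'=0$ on $(-\infty,1/5]$ to get exact first-order flatness rather than mere smallness. Equally delicate are the boundary cases: the public root $t_0=1$ with $v_0=e$, and the absent $S_0$ at $j=1$.
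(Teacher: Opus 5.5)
Your proposal is correct and follows essentially the same route as the paper's proof. The steps match: you get the prefix property from the monotone masses $p_0\ge p_1\ge\cdots$ under deletion closure. You use the exact flatness identities $q=q'=h=h'=V=V'=0$ for $|t|<\delta_g$. You check the entrance, exit, state-coupling and penalty terms stage by stage, using the prefix property so that the nonflat factor $e_s(t_{j-1})$ in $G_{\mathrm{st},j}$ involves only released or public sources. You take the two-port actions from Theorem~\ref{hd:thm:pipe}(iv), and you obtain the span claim from Lemma~\ref{hd:lem:radial-span}.
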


\begin{proof}
Deletion closure gives $p_0\ge p_1\ge\cdots\ge p_H$.  Hence, if the source
at a later macro site $jm$ is eligible, every earlier unreleased macro
source is eligible as well.  Repeating releases until none is eligible makes
the released set a prefix.

Because $\delta_g<1/10<1/5$, every $|t|<\delta_g$ obeys the exact identities
\begin{equation}\label{hd:eq:inactive-jets}
 q(t)=q'(t)=0,
 \qquad h(t)=h'(t)=0,
 \qquad V(t)=V'(t)=0.
\end{equation}
We inspect the source-dependent terms stage by stage; ``zero first jet''
means zero value and zero derivative with respect to every primal block.

If the preceding source of $G_{\rm en,j}$ is unreleased, then
$q(t_{j-1})=q'(t_{j-1})=0$, so the entrance has zero first jet despite the
bare inner product with $v_{j-1}$.  If the preceding source is released, it
is known.  When the current source is unreleased,
$1-q(t_j)=1$ with zero derivative, so no current source is needed.

The same preceding gate makes $G_{\rm ex,j}$ have zero first jet whenever
the preceding source is unreleased.  Otherwise that source is known, and an
unreleased current source occurs only through the constant first jet of
$1-q(t_j)$.  The direct inner product
$\langle Z(S_j),E_\nu(b_j)\rangle$ is public.

If the current source of $G_{\rm st,j}$ is unreleased, then
$q(t_j)=q'(t_j)=0$ and the entire state-coupling first jet is zero.  If it is
released, the prefix property says that the preceding source is also
released (with the public convention $t_0=1$ at the first stage), so the
term is exactly evaluable.  This is the only place where the nonflat factor
$e_s(t_{j-1})$ could otherwise expose an earlier source.

Finally, at an unreleased current site, \eqref{hd:eq:inactive-jets} reduces
$P_j$ and its gradient exactly to
$\gamma\|S_j\|^2/2$ and $\gamma S_j$.  At a released site the source is
known and the full term is evaluated.  The two-port quadratic contains no
source vector and its complete first jet is obtained from the stated matrix
actions and public arithmetic.  These cases exhaust
\eqref{hd:eq:global-components}.

The final span assertion follows from
Lemma~\ref{hd:lem:radial-span}: cap values are scalar multiples of their
inputs, cap adjoints applied to another vector lie in the span of the two,
and all remaining operations are scalar linear combinations.  Hence the
reply itself exposes no orientation beyond the marked inputs, matrix
outputs, and released sources already present in the lazy coupling.
\end{proof}

\begin{proposition}[Static transcript equality on one common event]
\label{hd:prop:static-transcript}
Fix a deterministic component-oracle algorithm using at most $K$ queries.
Run the following simulator against the enlarged branch oracle.
For each component query, use the same selected component index on every
edge, mark every physical coordinate block occurring at each depth, obtain
the required forward and transpose actions, close all eligible releases,
and return the first jet described in
Lemma~\ref{hd:lem:source-flat-jet}.  At termination mark
$Z(S_M)/(2R_s)$.

At every depth there are at most five marked query vectors, so this is an
$8$-channel strategy with at most eight terminal slots.  Apply
Lemma~\ref{hd:lem:lazy} with $b=b_{\rm out}=8$, and put
\[
 a_{\rm flat}=\frac{\delta_g}{2R_s},\qquad
 \rho=\frac{a_{\rm flat}^2}{2},\qquad
 \mathcal E=\{\|P_Ve\|^2\le\rho\}.
\]
After completing the partial rotations, their marginal law is the fixed
product--Haar law defining the static components
\eqref{hd:eq:global-components}.  On $\mathcal E$, the simulator's entire
transcript and terminal output coincide, query by query, with the genuine
value--full-gradient transcript and output of that static finite-sum
instance.
\end{proposition}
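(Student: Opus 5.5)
We prove the transcript equality by induction over the algorithm's queries, building the genuine static instance and the simulator on the same probability space via the lazy coupling of Lemma~\ref{hd:lem:lazy}. The completed rotations have exactly the fixed product--Haar marginal, so the static functions \eqref{hd:eq:global-components} are well defined on this space. The channel count is checked first. At each depth the query's physical blocks are the state $S_j$, the connectors $a_j$ or $b_j$ sharing that depth, and the dual blocks $w_t^{(j)}$ from at most two stages meeting at a macro endpoint. These number at most five. Adding the forward and transpose matrix inputs keeps the total within $b=8$. At termination only $Z(S_M)/(2R_s)$ is marked, well within $b_{\rm out}=8$.

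The induction hypothesis is that the first $k$ replies of the simulator and of the genuine oracle coincide. Then the $(k+1)$st query is the same point, since the algorithm is deterministic. The main step is to show that, on $\mathcal E$, every unreleased macro site satisfies $|t_j|<\delta_g$ at this query; once this holds, Lemma~\ref{hd:lem:source-flat-jet} says the genuine first jet equals the simulator's computed jet. For this we use the normalized vector $h=Z(S_j)/(2R_s)$, which has norm at most one. It is a scalar multiple of the marked block $S_j$, by Lemma~\ref{hd:lem:radial-span}, so its formal coordinate lies in the exposed formal span at depth $jm$. Lemma~\ref{hd:lem:support} then gives $|\langle v_j,h\rangle|^2\le p_{jm}+\|P_Ve\|^2$. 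An unreleased site has $p_{jm}<\rho$, because releases are closed after every round. On $\mathcal E$ the right side is therefore less than $2\rho=a_{\rm flat}^2$, which gives $|t_j|<2R_s a_{\rm flat}=\delta_g$.

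There is a timing subtlety. The support lemma is applied after the marking of the current round but before its replies, and the accounting sets used are those after the previous closure. This matches clause (iv) of Lemma~\ref{hd:lem:lazy}: marked vectors are processed before matrix replies, so the reply cannot influence its own inactivity test. Given the inactivity, the replies agree. The matrix actions are exact by Lemma~\ref{hd:lem:lazy}(iii), and released sources equal the true $u_{jm}$. Moreover, the final span clause of Lemma~\ref{hd:lem:source-flat-jet} shows that the reply adds no orientation beyond what the coupling has already exposed, so the support invariant persists to the next query. The terminal output is the same point, and the same argument covers it.

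The main obstacle is making the whole argument live on one event $\mathcal E$, defined through the full reservoir span $V$, rather than conditioning sequentially on good events. I would handle this by fixing the reservoir of all $m_G$ Gaussian slots in advance, unused ones included, so that $V$ and $\mathcal E$ are fixed objects independent of the adaptive path. The support lemma holds pathwise, so no conditional Haar claim given earlier good events is needed. A second point to verify is that release eligibility depends only on the index and accounting history, never on unexposed parts of the rotations, so that the simulator's closure of eligible releases is a legal stopping-time reveal in the sense of Lemma~\ref{hd:lem:haar-extension}(iii).
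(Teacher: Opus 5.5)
Your proposal is correct and follows essentially the paper's proof. It uses the same lazy product--Haar coupling and the same channel count, inducts over queries, and applies the support lemma to the scalar multiple $Z(S_j)/(2R_s)$ of the marked state to get $|t_j|<\delta_g$ at every unreleased site on $\mathcal E$; it then invokes the source-flat-jet lemma and its span clause to preserve the support invariant. The only difference is cosmetic: you run the inactivity test with the accounting sets from the previous closure rather than after the current round's closure. This is equally valid because the masses $p_{jm}$ and the spaces $\mathcal R_t$ only grow, so every site still unreleased at reply time is covered.
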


\begin{proof}
At an interior branch depth a query has one dual vector.  At a shared macro
endpoint there can be two distinct endpoint dual vectors, the two adjacent
connectors, and one state vector, for a total of five.  Boundary depths have
fewer.  The cap images require no additional slots because they are scalar
multiples of the corresponding query vectors.  Thus eight slots are more
than sufficient.  The selected component $i$ requires exactly one forward
and one transpose action on each internal edge, as in
Theorem~\ref{hd:thm:pipe}(iv).

We prove transcript equality by induction.  Before the first query the two
histories agree.  If they agree through query $k-1$, determinism makes the
$k$th component index and physical query point identical.  The lazy
coupling processes all of its local vectors before any reply and returns the
exact static matrix actions.

After release closure, any unreleased macro site $j$ has
$p_{jm}<\rho$.  The vector $Z(S_j)/(2R_s)$ has norm at most one.  Since
$S_j$ was marked and $Z(S_j)$ is its scalar multiple,
Lemma~\ref{hd:lem:support} applies.  On $\mathcal E$,
\[
 \frac{|t_j|^2}{4R_s^2}
 \le p_{jm}+\|P_Ve\|^2
 <2\rho=a_{\rm flat}^2,
\]
so $|t_j|<\delta_g$.  Lemma~\ref{hd:lem:source-flat-jet} therefore says
that the simulator returns exactly the value and every gradient block of
$\bar f_i$ at this query.  By its span conclusion, recording this reply does
not require any additional unaccounted orientation exposure.  There can be
no first mismatching reply, and the induction continues through all
queries.  The deterministic terminal output is consequently identical as
well.  Completing the rotations has the static product--Haar marginal by
Lemma~\ref{hd:lem:lazy}.
\end{proof}

\begin{theorem}[Fixed-law component lower bound for the global assembly]
\label{hd:thm:unscaled-lower}
There is a numerical $c_*>0$ such that every unrestricted randomized
component algorithm succeeding with probability $2/3$ at Moreau threshold
$\bar\epsilon$ on every member of \eqref{hd:eq:global-components} requires
more than $c_*nM(N-1)$ calls, up to integer rounding.  The hard distribution
is static, is independent of the algorithm's private seed, and is supported
on finite-dimensional instances fixed before the interaction.
\end{theorem}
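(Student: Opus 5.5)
The plan is to combine the static transcript coupling of \cref{hd:prop:static-transcript} with the enlarged-oracle lower bound of \cref{hd:thm:serial}, and to use the Moreau decoding \eqref{hd:eq:decoding} to turn algorithmic success into terminal-direction recovery. Take $H=M(N-1)$ and source sites at the macro endpoints $jm$, so there are $M\le H$ sites. Set $a=\min\{a_{\rm flat},1/40\}$; since $a_{\rm flat}=\delta_g/(2R_s)\ll1/40$, this gives $a=a_{\rm flat}$, and $\rho=a^2/2$ is the same $\rho$ used for the release rule in \cref{hd:prop:static-transcript}. Fix $b=b_{\rm out}=8$. Choose the multiplicity $D$ by \eqref{hd:eq:dimensions} with $K_*=\lfloor\rho^2nH/(4\mathrm e)\rfloor$. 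This fixes the static product--Haar law and hence the finite-dimensional instance distribution, before any algorithm is considered.

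First fix the private seed, so the algorithm is deterministic, and suppose it uses $K\le K_*$ component queries. Each query becomes exactly one matrix round, because one index is used on every edge and forward and transpose actions are permitted in the same round. It uses at most five marked vectors per depth, and the terminal vector is $\widehat v=Z(S_M)/(2R_s)$. By \cref{hd:prop:static-transcript}, on the event $\mathcal E=\{\|P_Ve\|^2\le\rho\}$ the simulated transcript and output coincide with the genuine ones for the static instance. Let $\widehat x$ denote the genuine output. If $\|\nabla\bar\Phi_{1/(2L_0)}(\widehat x)\|\le\bar\epsilon$, then \eqref{hd:eq:decoding} gives $\langle v_M,\widehat v\rangle>1/40\ge a$, where $v_M=u_H$.

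Hence, for a fixed seed, the genuine success event is contained in the union of $\mathcal E^c$ and the event $\{|\langle u_H,\widehat v\rangle|\ge a\}$. Here $\widehat v$ is the simulator's marked terminal vector, which coincides with the genuine one on $\mathcal E$. The proof of \cref{hd:thm:serial} gives $\PP(\mathcal E^c)\le1/16$, and on $\mathcal E$ it shows $|\langle u_H,\widehat v\rangle|^2\le5a^2/8<a^2$. So the success probability is at most $1/16$. Integrating over the independent seed keeps the instance law fixed, so every randomized algorithm with at most $K_*$ queries succeeds with probability at most $1/16<2/3$ averaged over the distribution. Some instance in the support must therefore have success probability below $2/3$. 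This gives the bound with $c_*=\rho^2/(4\mathrm e)$, up to the floor in $K_*$.

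The main obstacle is making sure the $\mathcal E$ of the proposition and the $\mathcal E$ in \cref{hd:thm:serial} are the same event under one coupling. Both use the same reservoir $V$ built from all $K_*$ rounds of Gaussian slots, including unused ones. The two $\rho$ values must also agree, so that releases in the simulation are exactly those of the public rule. I would check this explicitly, together with the requirement that the threshold $a$ used in \cref{hd:thm:serial} is at most the decoding margin $1/40$. A second point is that the decoding statement needs the genuine output, and this is valid only on $\mathcal E$. That is why the bound is taken as a union bound rather than by conditioning.
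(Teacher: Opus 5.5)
Your proposal is correct and follows the paper's proof essentially step for step: the same parameters $a_*=\delta_g/(2R_s)$, $\rho=a_*^2/2$, $K_*=\lfloor\rho^2nH/(4\mathrm e)\rfloor$ and multiplicity $D$ with $b=b_{\rm out}=8$, and the static-transcript coupling on $\mathcal E=\{\|P_Ve\|^2\le\rho\}$. As in the paper, the accounting and support bounds force terminal correlation below $a_*$ on $\mathcal E$, contradicting the decoding margin $1/40$, and integrating over the private seed gives average success at most $1/16$ with $c_*=\rho^2/(4\mathrm e)$. Your explicit check that the release threshold and reservoir event coincide between the proposition and the serial theorem is exactly the consistency the paper uses implicitly.
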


\begin{proof}
Put $H=M(N-1)$ and use macro source sites $jm$.  Let
\[
 a_*=\frac{\delta_g}{2R_s}=\frac1{8600000},
 \qquad \rho=a_*^2/2,
 \qquad
 K_*=\left\lfloor\frac{\rho^2nH}{4\mathrm e}\right\rfloor,
\]
and choose
\begin{equation}\label{hd:eq:global-coupling-dimension}
 D=\left\lceil
 \frac{32(8K_*+8)}{a_*^2}\sum_{t=0}^Hn^t
 \right\rceil.
\end{equation}
This fixes all primal and dual dimensions before the algorithm runs.  The
full optimization dimension is finite; for example it is at most a
universal constant times $D\sum_{t=0}^Hn^t$.

Fix first a deterministic algorithm using $K\le K_*$ component queries and
form the simulator in Proposition~\ref{hd:prop:static-transcript}.  It is a
valid enlarged-oracle strategy: extra marked channels reveal nothing, and
one component query uses only the single selected forward and transpose
action on each edge.  All other operations are public arithmetic or use a
source supplied by the release rule.

The reservoir in \eqref{hd:eq:global-coupling-dimension} satisfies
\[
 \mathbb E\|P_Ve\|^2\le a_*^2/32,
 \qquad
 \mathbb P(\mathcal E^c)\le
 \frac{a_*^2/32}{\rho}=\frac1{16}.
\]
No Haar assertion is conditioned on $\mathcal E$; the event is imposed only
after the unconditional lazy construction has produced the static
product--Haar marginal.

On $\mathcal E$, Proposition~\ref{hd:prop:static-transcript} identifies the
simulated run with the genuine static component-oracle run.  The accounting
theorem, with $K\le K_*$ and at most $M\le H$ releases, gives
$p_H\le\rho/4$ exactly as in Theorem~\ref{hd:thm:serial}.  Applying
Lemma~\ref{hd:lem:support} to the marked terminal vector
$\widehat v=Z(S_M)/(2R_s)$ yields
\[
 |\langle u_H,\widehat v\rangle|^2
 \le p_H+\|P_Ve\|^2
 \le\rho/4+\rho
 =5a_*^2/8<a_*^2.
\]
In particular the terminal correlation is below $a_*$.  But if the genuine
static run produced a $\bar\epsilon$-Moreau-stationary point,
\eqref{hd:eq:decoding} would give correlation greater than $1/40>a_*$.  No
successful output is therefore possible on $\mathcal E$, and the success
probability under the fixed product--Haar instance distribution is at most
$1/16$.

For a randomized algorithm, condition on its private seed.  The preceding
argument applies to every seed with the same product--Haar marginal and the
same dimension, so integrating over the seed still gives average success at
most $1/16$.  If the algorithm succeeded with probability at least $2/3$
on every fixed instance, its average success under this static distribution
would be at least $2/3$, a contradiction.  Thus, for every randomized
algorithm with budget $K\le K_*$, some fixed instance in the support violates
the required success guarantee.  Since an integer successful budget must be
strictly larger than
$K_*=\lfloor\rho^2nH/(4\mathrm e)\rfloor$, one may take
$c_*=\rho^2/(4\mathrm e)$ with no hidden additive loss.
\end{proof}

\subsection{Parameter scaling and the matching region}\label{hd:sec:scaling}
We conclude the proof of Theorem~\ref{hd:thm:main}. Let $C_Y=20\sqrt3$, $L_0=10^8$, $C_\Delta=31000$, and $\bar\epsilon=1/40$ as above. For prescribed $L,\epsilon$, choose the spatial scale
\[
 \lambda=\frac{L_0\epsilon}{L\bar\epsilon},\qquad
 f_i(x,y)=\frac L{L_0}\lambda^2\bar f_i(x/\lambda,y/\lambda).
\]
The scaled dual ball has diameter $2\lambda C_Y n^{1/4}N$. The component smoothness bound is $L$, and the Moreau convention scales exactly:
\begin{equation}\label{hd:eq:moreau-scaling}
 \nabla\Phi_{1/(2L)}(x)
 =\frac{L\lambda}{L_0}\nabla\bar\Phi_{1/(2L_0)}(x/\lambda)
 =\frac\epsilon{\bar\epsilon}
      \nabla\bar\Phi_{1/(2L_0)}(x/\lambda).
\end{equation}
This follows by changing variables in the proximal minimization, whose quadratic coefficient becomes $L_0$ inside the unscaled bracket.

Define numerical constants
\[
 \eta_M=\frac{\bar\epsilon^2}{L_0C_\Delta},\qquad
 \eta_N=\frac{\bar\epsilon}{2L_0C_Y},\qquad
 A_0=\frac2{\eta_M},\qquad B_0=\frac4{\eta_N}.
\]
For $A\ge A_0$ and $B\ge B_0n^{1/4}$ take
\[
 M=\lfloor \eta_M A\rfloor,\qquad
 N=\lfloor \eta_N B/n^{1/4}\rfloor.
\]
Then $M\ge1$, $N\ge2$, the gap is at most $\Delta$, and the dual diameter is at most $D_Y$. Moreover
\[
 M\ge \eta_M A/2,\qquad N-1\ge \eta_N B/(2n^{1/4}).
\]
Theorem~\ref{hd:thm:unscaled-lower} and the exact scaled oracle simulation imply
\[
 \T>c_*nM(N-1)\ge\frac{c_*\eta_M\eta_N}{4}\,n^{3/4}AB,
\]
with integer rounding interpreted by requiring a successful integer budget to exceed the corresponding floor. The upper bound \eqref{eq:accelerated-main} reduces to the right side of \eqref{hd:eq:matching} because $A\ge A_0$ and $B\ge B_0n^{1/4}$ absorb the repeated $n$ initialization cost. No amortized initialization theorem is needed in this region.

\section{Lower-Bound Reductions and Completion of the Main Proof}
\label{app:reductions}
\subsection{The smooth finite-sum baseline}
\label{sec:moreau-facts}
We use Theorem~3.1 of Peng, Tang, and Jia~\cite{peng}, with the
value-and-gradient oracle and parameter conventions in their
Appendix~G.1. That theorem permits arbitrary adaptive component and
point queries, private randomness, and unqueried outputs. Its
constants can be chosen with $n_0=1024$ and failure threshold
$p_0=11/16$, so failure probability $1/3$ is covered. We import this
preprint theorem as a lower-bound input; the reduction
below is proved here.

\begin{lemma}[Smooth baseline in the original Moreau convention]
\label{lem:baseline}
There are universal $A_{\mathrm{base}},c>0$ such that for every
$n\ge1$, $D_Y\ge0$, and $A\ge A_{\mathrm{base}}$,
\[
 \T\ge c(n+\sqrt n A).
\]
The hard components can be chosen independently of the dual variable.
\end{lemma}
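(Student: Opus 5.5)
The plan is to import Theorem~3.1 of Peng et al.~\cite{peng} as a black box and transfer it to our class and our Moreau convention. That theorem supplies, for $n\ge n_0=1024$ and a suitable accuracy range, a distribution over individually $L'$-smooth components $g_i:\R^{d}\to\R$ with initial gap at most $\Delta'$. Every randomized value-and-gradient algorithm with fewer than $c(n+\sqrt n\,L'\Delta'/\varepsilon'^2)$ queries fails with probability at least $p_0=11/16>1/3$ to output a point with $\norm{\nabla G(\widehat x)}\le\varepsilon'$, where $G=n^{-1}\sum_i g_i$. I would set $f_i(x,y)=g_i(x)$, independent of $y$, on any dual ball of diameter at most $D_Y$ (a single point also works). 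Then $\Phi=G$, IS$(L)$ holds, the gap is unchanged, and a query to $f_i$ is answered by one query to $g_i$ with zero dual gradient. So the algorithm class is simulated exactly.

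The transfer of stationarity uses \cref{lem:transfer}. If $\norm{\nabla\Phi_{1/(2L)}(\widehat x)}\le\varepsilon$, then $\norm{\nabla G(\widehat x)}\le\tfrac32\varepsilon$. So I invoke Peng et al.\ with smoothness $L'=L$, gap $\Delta'=\Delta$ and accuracy $\varepsilon'=\tfrac32\varepsilon$. Any success for our criterion with probability $2/3$ then yields success for theirs with probability $2/3>1-p_0$, a contradiction below the budget. This gives $\T\ge c(n+\sqrt n\,A\cdot 4/9)$ whenever their accuracy condition holds. I expect that condition to be of the form $A\ge A_1$, possibly together with some relation between $n$ and $A$.

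The main obstacle is parameter-range bookkeeping. First, the case $n<1024$ needs a replacement, since Peng et al.\ require $n\ge n_0$. There $n+\sqrt nA=\Theta(A)$, so the single-component $\Omega(A)$ bound of Carmon et al.~\cite{carmon} applies, transferred through \cref{lem:transfer} in the same way. Alternatively, one can replicate the $n=1$ instance $n$ times, which preserves IS and the average.

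Second, any upper restriction on $n$ relative to $A$ in their theorem (e.g.\ $n\lesssim A^2$) must be handled. Outside such a range, $\sqrt nA\lesssim n$, so only the additive $\Omega(n)$ term is needed. That term comes from a standard hidden-component instance: a single unknown index $i^*$ carries a scaled hard quadratic or linear piece of size $n$ times the average, and any algorithm querying $o(n)$ indices misses it with constant probability. At $A\ge A_{\mathrm{base}}$ the zero point is then not Moreau stationary, while the outputs of such an algorithm are independent of the hidden direction.

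I would set $A_{\mathrm{base}}$ to be the maximum of the thresholds from these three ingredients, and take $c$ to be the minimum of their constants.
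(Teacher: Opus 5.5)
Your proposal is correct, and its main step is the paper's. For $n\ge1024$ you embed Peng et al.'s instance as $y$-independent components on a permitted dual domain, transfer through \cref{lem:transfer} with tolerance $3\varepsilon/2$, and take $A_{\mathrm{base}}\ge\max\{2,9/(4c_0)\}$.

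The differences are in the side cases. For $n<1024$ the paper keeps Peng et al.\ as its only external input. It takes an $N=1024$-component instance $F_0=N^{-1}\sum_jG_j$ and makes every one of the $n$ target components equal to $F_0$, which is still individually $L$-smooth. One target query is simulated by $N$ external queries, so $NT\ge c_{\mathrm{ext}}(N+\frac49\sqrt N A)$ and hence $T\ge\frac{4c_{\mathrm{ext}}}{9N}(n+\sqrt n A)$. Your route via Carmon et al.'s randomized $\Omega(A)$ bound plus replication is also valid. Its cost is a second imported theorem whose complexity notion is stated for queried iterates at a constant probability level. To compare it with the present fixed-budget criterion, where the output may be unqueried, you must spend one extra query on the output.

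The imported theorem, as the paper uses it, has no upper restriction of $n$ in terms of $A$, so your third contingency is not needed for this lemma. The paper nevertheless gives the hidden-component $\Omega(n)$ instance explicitly, since it is reused in the cone section. It is $F_i(x)=\frac L2x^2-nLsR\,\mathbf 1_{\{i=J\}}x$ with $R=\sqrt{2\Delta/L}$ and independent uniform $J$ and sign $s$; with $Q\le n$ queries the success probability is at most $1/2+Q/(2n)$. Of your two options there, only the linear piece is admissible. A quadratic piece scaled by $n$ in a single component would break IS$(L)$.
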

\begin{proof}
First, the additive $n$ term has a direct construction. Let $J$ be
uniform in $[n]$, let $s$ be an independent uniform sign, and set
\[
 F_i(x)=\frac L2x^2-nLsR\,\mathbf1_{\{i=J\}}x,
 \qquad R=\sqrt{2\Delta/L}.
\]
Each component is individually $L$-smooth. The average has minimizer
$sR$, gap $\Delta$ at zero, and Moreau gradient $(2L/3)(x-sR)$.
For $A\ge2$, the two successful output intervals are disjoint.
Before the algorithm queries $J$, its transcript is independent of
$s$. For $Q\le n$, success is at most $1/2+Q/(2n)$, so a uniform
$2/3$ guarantee requires $Q\ge n/3$. All choices of $x$ and all
component value queries are included in this argument.

Let $T$ be the fixed budget of an alleged algorithm with the required
worst-case success guarantee. Let $N=1024$ and let
$c_0,c_{\mathrm{ext}}>0$ denote universal
accuracy and complexity constants from the imported theorem.
Take $A_{\mathrm{base}}\ge\max\{2,9/(4c_0)\}$.
For $n\ge N$, embed any individually $L$-smooth finite sum
$F=n^{-1}\sum_iF_i$ as $f_i(x,y)=F_i(x)$ on a fixed known compact
dual domain of permitted diameter. One component query is simulated
by one original query. By \cref{lem:transfer}, a successful Moreau
output is a direct-gradient output with tolerance
$\eta=3\varepsilon/2$. The imported accuracy requirement
$\eta^2\le c_0L\Delta$ holds, and the success probability is
unchanged. Hence
\[
 T\ge c_{\mathrm{ext}}
       \left(n+\frac49\sqrt n A\right).
\]
No Moreau-gradient evaluation or verification pass is required.

For $1\le n<N$, start from an $N$-component external instance
$F_0=N^{-1}\sum_{j=1}^N G_j$ and make each of the $n$ target
components equal to $F_0$. Each target component is $L$-smooth and
the average, initial gap, and value function are unchanged. Simulate
one target query by all $N$ external component queries. The same
stationarity transfer yields
\[
 NT\ge c_{\mathrm{ext}}
       \left(N+\frac49\sqrt N A\right).
\]
Since $n\le N$ and $N$ is universal, this implies
$T\ge (4c_{\mathrm{ext}}/(9N))(n+\sqrt n A)$.
Reducing the universal constant covers all component counts.
\end{proof}

\subsection{Balanced replication with the exact Moreau parameter}
\label{repl:section}
The following reduction preserves the average objective for every
integer component count. The factor of two reserved in individual
smoothness handles unequal group sizes, and
\cref{repl:moreau-comparison} accounts for the Moreau-parameter change.

\begin{lemma}[Balanced replication and one-query simulation]
\label{repl:balanced-replication}
Let $1\le s\le n$ be integers.  Let $g_1,\ldots,g_s$ be components
with jointly $L/2$-Lipschitz gradients on $\mathbb R^{d_x}\times Y$,
whose average is concave in $y$.  Partition $\{1,\ldots,n\}$ into
public groups $I_1,\ldots,I_s$ with sizes
\[
 r_j=|I_j|\in\{\lfloor n/s\rfloor,\lceil n/s\rceil\},
 \qquad\sum_{j=1}^sr_j=n,
\]
and define
\begin{equation}
 f_i(x,y)=\frac{n}{s r_j}g_j(x,y),\qquad i\in I_j.
 \label{repl:component-definition}
\end{equation}
Then every $f_i$ has jointly $L$-Lipschitz gradient, the two average
functions and value functions are identical, and one value--full-gradient
query to any $f_i$ can be simulated by one query to its corresponding
$g_j$.  The known dual domain and the initial value gap are unchanged.

Moreover, if an algorithm for the $n$-component instance outputs $x$
with
\[
 \|\nabla\Phi_{1/(2L)}(x)\|\le\epsilon,
\]
then the same output for the $s$-component instance satisfies
\[
 \|\nabla\Phi_{1/(2(L/2))}(x)\|\le3\epsilon/2.
\]
The reduction preserves every fixed query budget, adaptive query choice,
private random seed, and success probability.
\end{lemma}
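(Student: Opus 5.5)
The plan is to verify the four claims of \cref{repl:balanced-replication} in sequence and then chain the Moreau comparison. The only inequality is the weight bound. Since $r_j\ge\lfloor n/s\rfloor$ and $s\le n$, we have $r_j\ge n/(2s)$; indeed $\lfloor u\rfloor\ge u/2$ for $u\ge1$. Hence $n/(sr_j)\le2$, and each $f_i$ is a multiple of $g_j$ by a factor at most two. Its gradient is therefore $2\cdot(L/2)=L$ Lipschitz on the same product domain.

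Next I will record the averaging identity:
\[
 \frac1n\sum_{i=1}^n f_i=\frac1n\sum_{j=1}^s r_j\frac{n}{sr_j}g_j=\frac1s\sum_{j=1}^s g_j.
\]
The average is therefore the same function of $(x,y)$, and it remains concave in $y$. The known domain $Y$ is unchanged, so the value function $\Phi$, its infimum, $\Phi(x^0)$, and hence the gap coincide.

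For simulation, a query $(i,x,y)$ is handled by finding the public group $j$ with $i\in I_j$. We query $g_j$ at $(x,y)$ and multiply the value and both gradient blocks by the public constant $n/(sr_j)$. This is a deterministic, measurable transcript map. Any adaptive randomized algorithm for the $n$-instance thus becomes one for the $s$-instance with the same seed, the same number of calls, the same output, and the same success events.

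For stationarity, both instances share $\Phi$. The $s$-instance class has smoothness $L/2$, so its target is $\Phi_{1/(2(L/2))}=\Phi_{1/L}$. Since $\Phi$ is an $L/2$-weakly convex maximum of $L/2$-smooth-in-$x$ functions (\cref{lem:moreau}), and it is continuous and bounded below, \cref{repl:moreau-comparison} applies with $\rho=L/2$, $\lambda=1/(2L)$, $\nu=1/L$, giving $\rho\nu=1/2<1$. Inequality \eqref{repl:factor-three-halves} then yields $\|\nabla\Phi_{1/L}(x)\|\le\frac32\|\nabla\Phi_{1/(2L)}(x)\|\le3\epsilon/2$.

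The only point needing care is this last step. I must use the weak-convexity modulus $L/2$ of the original $s$-instance, not $L$, because with $\rho=L$ and $\nu=1/L$ the lemma degenerates. I must also check that continuity and bounded-belowness are inherited, which follows from the gap assumption.
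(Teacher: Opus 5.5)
Your proposal is correct and follows the paper's proof essentially step for step: the weight bound $n/(sr_j)\le (n/s)/\lfloor n/s\rfloor\le 2$, the within-group summation identity for the average, the one-query simulation by rescaling the $g_j$ reply, and \cref{repl:moreau-comparison} applied with $\rho=L/2$, exactly as the paper does when it notes that the shared value function is $L/2$-weakly convex. One small attribution fix: bounded-belowness does come from the gap assumption, but continuity of $\Phi$ comes from its being a finite-valued $L/2$-weakly convex function (equivalently, a maximum of jointly continuous functions over the compact $Y$), not from the gap assumption.
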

\begin{proof}
Since $n/s\ge1$,
\[
 0<\frac{n}{sr_j}
 \le\frac{n/s}{\lfloor n/s\rfloor}\le2.
\]
Positive rescaling therefore gives the asserted component smoothness.
Summing within the groups shows
\begin{equation}
 \frac1n\sum_{i=1}^nf_i(x,y)
 =\frac1n\sum_{j=1}^sr_j\frac{n}{sr_j}g_j(x,y)
 =\frac1s\sum_{j=1}^sg_j(x,y).
 \label{repl:average-identity}
\end{equation}
Thus average dual concavity, the value function $\Phi$, its gap, and the
dual domain are preserved exactly.  For a selected $i\in I_j$, query
$g_j$ at the same $(x,y)$ and multiply its value and both gradients by
$n/(sr_j)$.  This returns the exact oracle reply for $f_i$.

The shared value function is actually $L/2$-weakly convex: each fixed-dual
average has gradient Lipschitz constant at most $L/2$ in $x$, and taking
the maximum preserves weak convexity.  Lemma~\ref{repl:moreau-comparison}
therefore applies with $\rho=L/2$.  The stated implication follows from
\eqref{repl:factor-three-halves}.  Running the original algorithm on
these exact replies, with the same seed, proves all transcript and
budget assertions, including unrestricted final outputs.
\end{proof}

\begin{theorem}[Replicated lower envelope]
\label{repl:lower-envelope}
By \cref{hd:thm:main,hd:sec:scaling}, the high-diameter theorem
holds with numerical constants
$A_{\rm hd},B_{\rm hd},c_{\rm hd}>0$: for every positive integer $s$,
every nominal smoothness $L_g>0$, and every accuracy $\epsilon_g>0$,
the original-model minimax complexity obeys
\begin{equation}
 \begin{gathered}
 \T(s,L_g,\Delta,D_Y,\epsilon_g)
 \ge c_{\rm hd} A_gs^{3/4}B_g,\\
 A_g=\frac{L_g\Delta}{\epsilon_g^2}\ge A_{\rm hd},
 \quad
 B_g=\frac{L_gD_Y}{\epsilon_g}\ge B_{\rm hd}s^{1/4}.
 \end{gathered}
 \label{repl:hd-input}
\end{equation}
Increase $B_{\rm hd}$ to at least one if necessary.  Also, \cref{lem:baseline} gives the smooth finite-sum benchmark in
the same unrestricted oracle and Moreau convention, with numerical constants $A_{\rm sm},c_{\rm sm}>0$ such that
\begin{equation}
 \T(n,L,\Delta,D_Y,\epsilon)
 \ge c_{\rm sm}(n+\sqrt n\,A),
 \qquad A=\frac{L\Delta}{\epsilon^2}\ge A_{\rm sm}.
 \label{repl:smooth-input}
\end{equation}
This benchmark may be realized by components independent of $y$.
Then there are numerical $A_{\rm rep},c_{\rm rep}>0$ such that, for all $n\ge1$,
$B=LD_Y/\epsilon\ge0$, and $A\ge A_{\rm rep}$,
\begin{equation}
 \T(n,L,\Delta,D_Y,\epsilon)
 \ge c_{\rm rep}\left[
 n+A\min\{B^4,n^{3/4}B\}\right].
 \label{repl:final-envelope}
\end{equation}
One may take $A_{\rm rep}=\max\{\tfrac92 A_{\rm hd},A_{\rm sm}\}$.
\end{theorem}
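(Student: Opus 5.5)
The additive $n$ term and small $B$ come from \eqref{repl:smooth-input}. For large $B$ we pick an effective component count $s$, take the high-diameter hard instance with $s$ components, halved smoothness and the matching accuracy, and replicate it to $n$ components with \cref{repl:balanced-replication}. Throughout, $\T$ is monotone in $D_Y$: a smaller hard domain is allowed, since the worst case is over known domains of diameter at most $D_Y$. The plan has three parts.

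\emph{Small $B$.} Since $\T\ge c_{\rm sm}(n+\sqrt nA)$ for $A\ge A_{\rm sm}$, the additive $n$ term is always covered. Whenever $B^4\le K\sqrt n$ or $n^{3/4}B\le K\sqrt n$ for a constant $K$, the quantity $A\min\{B^4,n^{3/4}B\}$ is at most $KA\sqrt n$, so this bound suffices. In particular every $B\le B_1$ is handled, with $B_1$ a numerical constant chosen below.

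\emph{Main case $B>B_1$.} Set $s=\min\{n,\lfloor (B/(3B_{\rm hd}))^4\rfloor\}\ge1$, which needs $B_1\ge 3B_{\rm hd}$. Apply \eqref{repl:hd-input} with $s$ components, $L_g=L/2$ and $\epsilon_g=3\epsilon/2$. Then
\[
A_g=\frac{(L/2)\Delta}{(9/4)\epsilon^2}=\frac{2A}{9},\qquad B_g=\frac{B}{3}.
\]
Three checks follow.
\begin{itemize}
\item $A_g\ge A_{\rm hd}$ holds because $A\ge\tfrac92A_{\rm hd}$, which explains the stated $A_{\rm rep}$.
\item $B_g\ge B_{\rm hd}s^{1/4}$ holds by the choice of $s$.
\item The hard components are individually $L/2$-smooth with average concave in $y$.
\end{itemize}
Replicate via \cref{repl:balanced-replication}: the $n$ components are $L$-smooth, with the same dual domain, the same gap $\Delta$, and one-to-one query simulation. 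An $\epsilon$-Moreau-stationary output for parameter $1/(2L)$ is $\tfrac32\epsilon=\epsilon_g$-stationary for parameter $1/(2(L/2))$ on the $s$-instance. Hence any algorithm for $(n,L,\Delta,D_Y,\epsilon)$ with budget $T$ yields one for $(s,L/2,\Delta,D_Y,3\epsilon/2)$ with budget $T$ and the same success probability. This gives
\[
\T\ge c_{\rm hd}\,\tfrac{2}{9}\,A\,s^{3/4}\,\tfrac{B}{3}.
\]

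\emph{Comparing $s^{3/4}B$ with the envelope.}
\begin{itemize}
\item If $s=n$, then $s^{3/4}B=n^{3/4}B\ge\min\{B^4,n^{3/4}B\}$.
\item If $s=\lfloor(B/(3B_{\rm hd}))^4\rfloor<n$, then $(B/(3B_{\rm hd}))^4\ge B_1^4/(3B_{\rm hd})^4\ge 2$ once $B_1$ is large enough. The floor therefore loses at most a factor two, so $s\ge \tfrac12(B/(3B_{\rm hd}))^4$ and $s^{3/4}B\gtrsim B^4\ge\min\{B^4,n^{3/4}B\}$.
\end{itemize}
Taking the maximum of this bound and the smooth baseline, and using $\max\{u,v\}\ge(u+v)/2$, gives \eqref{repl:final-envelope}.

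The main obstacle is the exact parameter bookkeeping in the replication. The factor two lost in smoothness and the factor $3/2$ from the Moreau-parameter change must be pushed consistently into $A_g$ and $B_g$, and the hypotheses of \eqref{repl:hd-input} must hold for the \emph{reduced} instance rather than the original. One must also confirm that \eqref{repl:hd-input} is a statement about $\T$ at smoothness $L_g$ with Moreau parameter $1/(2L_g)$, which is exactly what the replication lemma outputs.
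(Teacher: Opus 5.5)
Your proposal is correct and follows the paper's proof essentially step for step: the same effective count $s=\lfloor\min\{n,(B/(3B_{\rm hd}))^4\}\rfloor$, the same application of the high-diameter bound at $L_g=L/2$, $\epsilon_g=3\epsilon/2$ (so $A_g=2A/9$, $B_g=B/3$), balanced replication with the three-halves Moreau comparison, the same split on whether $s=n$, and the smooth baseline for bounded $B$, combined via $\max\{u,v\}\ge(u+v)/2$. The differences are cosmetic. Your extra threshold $B_1$ is unnecessary, since $\lfloor X\rfloor\ge X/2$ already holds for every $X\ge1$, so $B_1=3B_{\rm hd}$ works as in the paper. Monotonicity of $\T$ in $D_Y$ is never used, because the high-diameter bound is applied at the same $D_Y$.
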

\begin{proof}
Set $b_*=3B_{\rm hd}\ge3$.  First suppose $B\ge b_*$.  Define
\begin{equation}
 X=\min\{n,(B/b_*)^4\},\qquad s=\lfloor X\rfloor.
 \label{repl:effective-count}
\end{equation}
Here $X\ge1$, so $1\le s\le n$ and $s\ge X/2$.  Apply the
high-diameter theorem at
\begin{equation}
 L_g=L/2,\qquad\epsilon_g=3\epsilon/2,
 \qquad A_g=2A/9,\qquad B_g=B/3.
 \label{repl:scaled-input}
\end{equation}
The accuracy condition holds by the definition of $A_{\rm rep}$, and
$s\le(B/b_*)^4$ implies
$B_g\ge B_{\rm hd}s^{1/4}$.  Balanced replication, including its
Moreau-gradient transfer, therefore yields
\begin{equation}
 \T(n,L,\Delta,D_Y,\epsilon)
 \ge\frac{2c_{\rm hd}}{27}A B s^{3/4}.
 \label{repl:transferred-bound}
\end{equation}
The rounding and comparison of the two branches are explicit:
\begin{align}
 B s^{3/4}
 &\ge2^{-3/4}\min\{n^{3/4}B,B^4/b_*^3\}\notag\\
 &\ge2^{-3/4}b_*^{-3}\min\{n^{3/4}B,B^4\}.
 \label{repl:rounding-bound}
\end{align}
Thus the nonlinear term in \eqref{repl:final-envelope} has lower-bound
coefficient
\[
 d_*=\frac{2c_{\rm hd}}{27\,2^{3/4}b_*^3}>0.
\]
The independent benchmark \eqref{repl:smooth-input} supplies
$\T\ge c_{\rm sm}n$.  Combining the two minimax
lower bounds by $\max\{u,v\}\ge(u+v)/2$ gives
\[
 \T
 \ge\tfrac12\min\{c_{\rm sm},d_*\}
       [n+A\min\{B^4,n^{3/4}B\}]
 \qquad(B\ge b_*).
\]
The two hard families need not be combined into a single instance to
justify this maximum.

If instead $0\le B<b_*$, write
$F_B=\min\{B^4,n^{3/4}B\}$.  Since $n\ge1$,
\[
 F_B\le B^4\le b_*^4\sqrt n,
 \qquad
 n+AF_B\le b_*^4(n+A\sqrt n).
\]
The benchmark alone proves the desired lower bound with coefficient
$c_{\rm sm}/b_*^4$.  Consequently
\[
 c_{\rm rep}
 =\tfrac12\min\{c_{\rm sm}/b_*^4,d_*\}
\]
works in both cases.  All floors are valid also for $n=1$ and for
$X=1$; there is no unhandled finite-$n$ or fractional-component case.
All constants are numerical and independent of $n,A,B$.
\end{proof}

\begin{remark}[Exact scope of the reduction]
\label{repl:scope}
The envelope theorem uses precisely the high-diameter lower theorem,
the smooth finite-sum benchmark of \cref{lem:baseline}, and the two
elementary lemmas proved above.  It does not use a new direct-product
assumption.  The small-$B$ benchmark is needed only for a bounded
parameter interval and to supply the additive $n$ term.  The lower bound
is over all admissible known dual domains and finite dimensions; the
replication step itself preserves the chosen dual domain exactly.
\end{remark}

\subsection{Proof of the main characterization}\label{sec:global}
We now prove \cref{global:thm:main}. All construction interfaces are proved above; the smooth baseline is
the explicitly identified external theorem with its proved transfer.

Choose \(A_*\) at least the universal thresholds of the cone theorem,
\cref{hd:thm:main}, the replication theorem, and the baseline transfer.
Thus \(1+A=O(A)\) in the range under consideration.

For \(B\le n^{1/6}\), the cone theorem gives exactly
\(\T=\Theta(n+A\sqrt n\barB)\).
In this range \(\barB^4\le\sqrt n\barB\), while
\(n^{3/4}\barB\ge\sqrt n\barB\); hence
\(\mathcal F(n,B)\asymp\sqrt n\barB\).

For \(n^{1/6}\le B\le n^{1/4}\),
\[
\sqrt n B\le B^4\le n^{3/4}B.
\]
The replicated lower bound is \(\Omega(n+AB^4)\).
The general upper bound \eqref{eq:general-upper} is
\(O(n+(1+A)B^4)\), without a logarithm.
Moreover \(\mathcal F(n,B)\asymp B^4\).

For \(B\ge n^{1/4}\), replication gives
\(\Omega(n+A n^{3/4}B)\) throughout this entire range, including
the constant-factor threshold region of the normalized high-diameter
family. The upper bound \eqref{eq:accelerated-main} is at most
\[
C(1+A)(n+n^{3/4}B)\log(2+n+A+B)
\le C'[n+A n^{3/4}B]\log(2+n+A+B),
\]
because \(n^{3/4}B\ge n\). In this range
\(B^4\ge n^{3/4}B\), so \(\mathcal F(n,B)=n^{3/4}B\).
These comparisons prove both \eqref{global:eq:matching} and
\eqref{global:eq:piecewise}. Since IS($L$) is a subclass of AS($L$)
and both upper bounds hold under AS($L$), the same characterization
holds for $\Tas$ with the stated sandwich inequality.
\end{document}